\newtheorem{thm}{Theorem}[subsection]
\newtheorem{tthm}{Theorem}[section]
\theoremstyle{defi}
\newtheorem{defi}[thm]{Definition}
\theoremstyle{ddefi}
\newtheorem{ddefi}[tthm]{Definition}
\theoremstyle{prop}
\newtheorem{prop}[thm]{Proposition}
\theoremstyle{rmk}
\theoremstyle{rrmk}
\theoremstyle{lemma}
\newtheorem{lemma}[thm]{Lemma}
\theoremstyle{llemma}
\newtheorem{llemma}[tthm]{Lemma}
\theoremstyle{ex}
\newtheorem{ex}[thm]{Example}
\theoremstyle{cor}
\begin{document}

\title[Contact Homology, Capacity and Non-Squeezing via Generating Functions]
{Contact Homology, Capacity and Non-Squeezing in $\mathbb{R}^{2n}\times S^{1}$ via Generating Functions}

\author{Sheila Sandon}
\address{Departamento de Matem\'{a}tica, Instituto Superior T\'{e}cnico, Av. Rovisco Pais, 1049-001 Lisboa, Portugal}
\email{sandon@math.ist.utl.pt}

\begin{abstract}
\noindent
Starting from the work of Bhupal \cite{B}, we extend to the contact case the Viterbo capacity and Traynor's construction of symplectic homology. As an application we get a new proof of the Non-Squeezing Theorem of Eliashberg, Kim and Polterovich \cite{EKP}.
\end{abstract}

\maketitle

\section{Introduction}\label{intro}

Consider the domains $B^{2n}(R)= \{\,\pi \sum_{i=1}^n x_i^{\,2} + y_i^{\,2} <R\,\}$ and $C^{2n}(R)=B^2(R)\times\mathbb{R}^{2n-2}$ in the standard symplectic euclidean space $\big(\mathbb{R}^{2n}, \omega_0=dx\wedge dy\big)$. Gromov's Non-Squeezing Theorem \cite{Gr} states that if $R_2<R_1$ then there is no symplectic embedding of $B(R_1)$ into $C(R_2)$. The analogous statement for balls and cylinders in the standard contact euclidean space $\big(\mathbb{R}^{2n+1}, \xi_0=\text{ker}\,(dz-ydx)\big)$ is trivially false, because one can use the contact transformation $(x,y,z)\mapsto (\alpha x, \alpha y, \alpha^2 z)$, where $\alpha$ is some positive constant, to squeeze any domain into an arbitrarily small ball\footnote{\hspace{1mm}In fact, as Francisco Presas explained to me, it is even possible to find a contact embedding of the whole $\mathbb{R}^{2n+1}$ into an arbitrarily small ball. A proof of this can be found for example in \cite{CKS}.}.
However an interesting non-squeezing phenomenon arises if we consider the contact manifold $\mathbb{R}^{2n}\times S^{1}$ instead of $\mathbb{R}^{2n+1}$, and the following stronger notion of contact squeezing.

\begin{ddefi}[\cite{EKP}]
Given open domains $\mathcal{U}_1$ and $\mathcal{U}_2$ in a contact manifold $(V,\xi)$ we say that $\mathcal{U}_1$ can be squeezed into $\mathcal{U}_2$ if there exists a contact isotopy $\varphi_t: \overline{\mathcal{U}_1} \longrightarrow V$, $t\in [0,1]$, such that $\varphi_0$ is the identity and $\varphi_1(\overline{\mathcal{U}_1})\subset\mathcal{U}_2$. We say that $\mathcal{U}_1$ can be squeezed into $\mathcal{U}_2$ inside a third domain $\mathcal{V}$ if $\varphi_t(\overline{\mathcal{U}_1})\subset\mathcal{V}$ for all $t$.
\end{ddefi}

Note that if $\overline{\mathcal{U}_1}$ is compact then by the isotopy extension theorem (see for example \cite{Ge}) any contact squeezing of $\mathcal{U}_1$ into $\mathcal{U}_2$ 
inside $\mathcal{V}$ can be extended to a global contactomorphism of $V$ supported in $\mathcal{V}$.\\
\\
Given a domain $\mathcal{U}$ in $\mathbb{R}^{2n}$ we will denote by $\widehat{\mathcal{U}}$ the domain $\mathcal{U}\times S^1$ in $\mathbb{R}^{2n}\times S^{1}$. In \cite{EKP} it is proved that for any $R_1$, $R_2$ there exists a contact embedding of $\widehat{B(R_1)}$ into $\widehat{B(R_2)}$, which if $n>1$ is isotopic through smooth embeddings to the inclusion $\widehat{B(R_1)}\hookrightarrow\mathbb{R}^{2n}\times S^{1}$. However, this isotopy cannot be made contact if $R_2 < k \leq R_1$ for some integer $k$.

\begin{tthm}[Non-Squeezing Theorem \cite{EKP}]\label{thm1}
Assume $R_2\leq k \leq R_1$ for some integer $k$. Then the closure of $\widehat{B(R_1)}$ cannot be mapped into $\widehat{B(R_2)}$ by a compactly supported contactomorphism of $\mathbb{R}^{2n}\times S^1$. In particular, $\widehat{B(R_1)}$ cannot be squeezed into $\widehat{B(R_2)}$. 
\end{tthm}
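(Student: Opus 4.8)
The plan is to deduce the theorem from a contact analogue of the Viterbo capacity, built out of generating functions, whose value on the domains $\widehat{B(R)}$ jumps at every integer. Following Bhupal \cite{B}, a compactly supported contactomorphism $\varphi$ of $\mathbb{R}^{2n}\times S^1$ lifts to a contactomorphism of $\mathbb{R}^{2n+1}=J^1(\mathbb{R}^n)$ that commutes with the period-$1$ Reeb flow $z\mapsto z+1$ and is the identity outside a compact set in the $\mathbb{R}^{2n}$-directions; one then associates to $\varphi$ a Legendrian submanifold (its ``contact graph'') admitting a generating function quadratic at infinity $S_\varphi$. The critical points of $S_\varphi$ are in bijection with the \emph{translated points} of $\varphi$ --- the points $x$ such that $\varphi(x)$ lies on the Reeb orbit through $x$ --- and each critical value records the Reeb time from $x$ to $\varphi(x)$; since the Reeb flow on $\mathbb{R}^{2n}\times S^1$ is the rotation of the $S^1$-factor, of period $1$, this ``action'' is the real number of times that orbit winds around $S^1$. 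Following Traynor, from the filtered generating function homologies of all $\varphi$ supported in a domain $\mathcal{U}\subset\mathbb{R}^{2n}\times S^1$ one extracts a min-max critical value, which yields a number $c(\mathcal{U})\in[0,+\infty]$.

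First I would establish monotonicity: if a compactly supported contactomorphism $\psi$ of $\mathbb{R}^{2n}\times S^1$ satisfies $\psi(\mathcal{U}_1)\subset\mathcal{U}_2$, then conjugation by $\psi$ carries contactomorphisms supported in $\mathcal{U}_1$ to contactomorphisms supported in $\mathcal{U}_2$, and the induced maps on generating functions and on their filtered homologies give $c(\mathcal{U}_1)\le c(\mathcal{U}_2)$. The one feature absent from Traynor's symplectic setting is that $\psi$ preserves only the contact structure, not a contact form, hence rescales the Reeb field by a positive conformal factor; one must check that the relevant critical values, being integer winding numbers (or bounded by them), transform compatibly with the min-max, so that the inequality survives. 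This is essentially the functoriality of generating function homology, and I expect it to go through as in the symplectic case once the conformal bookkeeping is arranged.

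The heart of the matter is the computation $c\big(\widehat{B(R)}\big)=\lfloor R\rfloor$. For the lower bound $c(\widehat{B(R)})\ge k$ when $R\ge k$, I would take a suitable contact lift of the symplectic ``rotation'' Hamiltonian of $B(R)$, supported in $\widehat{B(R)}$, for which the origin is a translated point winding $\lfloor R\rfloor\ge k$ times around $S^1$, and show --- exactly as in the computation of the Viterbo capacity of the symplectic ball --- that this critical value is homologically essential. For the upper bound $c(\widehat{B(R)})\le\lfloor R\rfloor$, the point is an ``area budget'': moving a point once around the $S^1$-factor forces, through the relation $dz=y\,dx$ along Legendrian curves, an enclosed symplectic area equal to $1$, while the disk $B^{2}(R)$ carrying this area has area $R$; hence no translated point of a contactomorphism supported in $\widehat{B(R)}$ can wind more than $\lfloor R\rfloor$ times, the action filtration never reaches level $\lfloor R\rfloor+1$, and the min-max value is at most $\lfloor R\rfloor$. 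Making the area-budget estimate rigorous at the level of generating functions, and proving that the winding-$\lfloor R\rfloor$ critical value really is detected by the homology, is the step I expect to be the main obstacle.

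Granting all of this, suppose $R_2\le k\le R_1$ and, for contradiction, that some compactly supported contactomorphism $\psi$ of $\mathbb{R}^{2n}\times S^1$ maps $\overline{\widehat{B(R_1)}}$ into $\widehat{B(R_2)}$. Since $\overline{\widehat{B(R_1)}}$ is compact and $\widehat{B(R_2)}$ is open, $\psi\big(\overline{\widehat{B(R_1)}}\big)\subset\widehat{B(R_2-\varepsilon)}$ for some $\varepsilon>0$ with $R_2-\varepsilon<k$, so in particular $\psi(\widehat{B(R_1)})\subset\widehat{B(R_2-\varepsilon)}$. Monotonicity together with the ball computation then gives
\[
k\le\lfloor R_1\rfloor=c\big(\widehat{B(R_1)}\big)\le c\big(\widehat{B(R_2-\varepsilon)}\big)=\lfloor R_2-\varepsilon\rfloor\le k-1,
\]
a contradiction. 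Hence no such $\psi$ exists; in particular $\widehat{B(R_1)}$ cannot be squeezed into $\widehat{B(R_2)}$.
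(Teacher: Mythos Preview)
Your overall strategy---define a contact capacity via generating functions, establish monotonicity and invariance, compute it on $\widehat{B(R)}$, and derive a contradiction---is exactly the route the paper takes. But two of your steps contain genuine gaps that are not merely bookkeeping.

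\textbf{Conjugation invariance.} You write that the conformal factor is ``essentially the functoriality of generating function homology'' and expect it ``to go through as in the symplectic case''. It does not. In the symplectic case $c(\varphi)=c(\psi\varphi\psi^{-1})$ because the action spectrum is conjugation-invariant; in the contact case the set of critical values of the generating function of $\phi$ is the set of contact actions $\phi_3(q)-z$ at translated points $q$, and this set is \emph{not} conjugation-invariant, so neither is $c(\phi)$. What the paper observes is that a translated point with \emph{integer} action $k$ satisfies $\phi(q)=q+(0,0,k)$ and $g(q)=0$, hence is a genuine fixed point once one passes to $\mathbb{R}^{2n}\times S^1$; such points \emph{are} preserved by conjugation with $1$-periodic $\psi$ (Lemma~\ref{uffa2}). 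This is what pins down the integer critical levels along the isotopy $t\mapsto\psi_t\phi\psi_t^{-1}$ and yields $\lceil c(\phi)\rceil=\lceil c(\psi\phi\psi^{-1})\rceil$ (Proposition~\ref{conj_per}). Without this mechanism you have no invariance statement, and hence no monotonicity under contactomorphisms.

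\textbf{The upper bound for $c(\widehat{B(R)})$.} Your area-budget claim---that ``no translated point of a contactomorphism supported in $\widehat{B(R)}$ can wind more than $\lfloor R\rfloor$ times''---is false. Already for the lift $\widetilde{\varphi}$ of a Hamiltonian symplectomorphism supported in $B(R)$, the contact action at a translated point equals the symplectic action $F(x,y)$ at the corresponding fixed point, and this can be made arbitrarily large by taking a large Hamiltonian (e.g.\ the maximum of $H$ at the center of the ball). What is bounded is only the \emph{selected} min-max value $c(\phi)$, not all critical values, and the bound does not come from an enclosed-area argument. The paper instead proves $c(\mathcal{U}\times S^1)=\lceil c(\mathcal{U})\rceil$ (Theorem~\ref{u}(iii)) by showing that every $\phi\in\mathrm{Cont}(\mathcal{U}\times S^1)$ is dominated, in the partial order $\leq$, by the lift $\widetilde{\varphi}$ of some $\varphi\in\mathrm{Ham}(\mathcal{U})$; together with $c(\widetilde{\varphi})=c(\varphi)$ (Proposition~\ref{lift_c}) and monotonicity of $c$ under $\leq$, this reduces the computation to the known symplectic Viterbo capacity $c(B(R))=R$.

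Once these two points are handled correctly, your final contradiction argument is fine (and essentially identical to the paper's, end of Section~\ref{contact_cap}).
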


Eliashberg, Kim and Polterovich also proved that $\widehat{B(R_1)}$ can be squeezed into $\widehat{B(R_2)}$ if $R_1$ and $R_2$ are smaller than $1$ and if $n>1$ (in the 3-dimensional case it is never possible to squeeze $\widehat{B(R_1)}$ into a smaller $\widehat{B(R_2)}$, as can be seen using the techniques in \cite{E}). It remains an open question whether $\widehat{B(R_1)}$ can be squeezed into $\widehat{B(R_2)}$ for $n>1$ and $k-1 < R_2 \leq R_1 <k$ with $k>1$.\\
\\
An interesting feature of contact squeezing is that it requires extra room. For example, if $R_2\leq \frac{1}{l}\leq R_1$ for some integer $l$, then any contact squeezing of $\widehat{B(R_1)}$ into $\widehat{B(R_2)}$ must move $\widehat{B(R_1)}$ outside $\widehat{B(\frac{1}{l-1})}$ at a certain time. This is a special case of the following theorem.

\begin{tthm}[\cite{EKP}]\label{thm2}
Assume that $R_2\leq\frac{k}{l}\leq R_1<R_3 \leq\frac{k}{l-1}$ for some integers $k$ and $l$. Then the closure of $\widehat{B(R_1)}$ cannot be mapped into $\widehat{B(R_2)}$ by a compactly supported contactomorphism $\psi$ of $\mathbb{R}^{2n}\times S^1$ with $\psi\,\big(\widehat{B(R_3)}\big)=\widehat{B(R_3)}$. In particular, $\widehat{B(R_1)}$ cannot be squeezed into $\widehat{B(R_2)}$ inside $\widehat{B(\frac{k}{l-1})}$.
\end{tthm}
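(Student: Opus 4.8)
The plan is to follow the strategy of Viterbo's capacity in the symplectic case and adapt it via the generating function homology that (according to the abstract) is developed in the body of the paper. The central tool should be a \emph{contact capacity} $c$ assigned to domains in $\mathbb{R}^{2n}\times S^1$, built out of the generating function contact homology of Bhupal, which is monotone under contact inclusions in the appropriate sense and which is large enough to detect the integer $k$. Concretely, I expect that for the prequantized ball $\widehat{B(R)}$ one has a computation of the form $c\big(\widehat{B(R)}\big)=\lceil R\rceil$ (or some normalization thereof), coming from the periods of Reeb orbits / the discrete action spectrum $\mathbb{Z}$ on $\mathbb{R}^{2n}\times S^1$. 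The two ingredients to be assembled are therefore: (i) a monotonicity/naturality property of $c$ under the relevant maps, and (ii) the explicit value of $c$ on prequantized balls.

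The key steps, in order, would be: \textbf{Step 1.} Recall from the earlier sections the construction of the generating-function contact homology groups $G(\mathcal{U})$ for domains $\mathcal{U}\subset\mathbb{R}^{2n}\times S^1$, together with the action-filtration and the resulting numerical invariant $c(\mathcal{U})$, and state its functoriality: a compactly supported contactomorphism $\psi$ carrying $\overline{\mathcal{U}_1}$ into $\mathcal{U}_2$ induces the inequality $c(\mathcal{U}_1)\le c(\mathcal{U}_2)$, and if in addition $\psi$ preserves a larger domain $\mathcal{W}$ then $c$ computed \emph{relative to} $\mathcal{W}$ is also monotone. \textbf{Step 2.} Introduce the relative capacity $c_{\mathcal{W}}(\mathcal{U})$ measured inside a fixed ambient domain $\mathcal{W}$ (i.e. using only those generating functions / Hamiltonians supported in $\mathcal{W}$), and show $c_{\mathcal{W}}$ is monotone under contactomorphisms $\psi$ with $\psi(\mathcal{W})=\mathcal{W}$ and $\psi(\overline{\mathcal{U}_1})\subset\mathcal{U}_2$. \textbf{Step 3.} Compute, or quote the computation of, $c_{\widehat{B(k/(l-1))}}\big(\widehat{B(R)}\big)$ for $R\le k/(l-1)$; the point is that inside the bounded chamber $\widehat{B(k/(l-1))}$ the available "contact Hamiltonians" see only the periods up to $l-1$, so the capacity saturates and distinguishes $\widehat{B(R_1)}$ from $\widehat{B(R_2)}$ precisely when some fraction $k/l$ with $l-1 < \text{something}$ lies between them — this is exactly the arithmetic condition $R_2\le k/l\le R_1<R_3\le k/(l-1)$. \textbf{Step 4.} Combine: if $\psi$ were as in the statement, Step 2 would give $c_{\widehat{B(R_3)}}\big(\widehat{B(R_1)}\big)\le c_{\widehat{B(R_3)}}\big(\widehat{B(R_2)}\big)$, contradicting the strict inequality between these two numbers obtained in Step 3. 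The "in particular" clause follows by the isotopy-extension remark already in the introduction, which turns a contact squeezing inside $\widehat{B(k/(l-1))}$ into such a $\psi$.

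I expect the main obstacle to be \textbf{Step 3}, the explicit computation of the \emph{relative} capacity inside a bounded chamber $\widehat{B(k/(l-1))}$. The absolute capacity of $\widehat{B(R)}$ should be a more or less direct generating-function analogue of Viterbo's computation for balls, but cutting everything off inside a bounded prequantized ball changes which Reeb chords / which critical values survive, and one must argue carefully that the filtration level at which the fundamental-class-type generator appears is governed by $\lceil (l-1)R/k\rceil$-style arithmetic rather than by the full spectrum $\mathbb{Z}$. Controlling this requires a good understanding of how the generating functions behave under the compactly-supported-inside-$\mathcal{W}$ restriction, and in particular a monotonicity/squeezing lemma at the level of generating functions that respects the ambient constraint; establishing that lemma with the correct bookkeeping of action values is where the real work lies. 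The functoriality in Steps 1--2 and the final contradiction in Step 4 should be formal once the invariant and its relative version are in place.
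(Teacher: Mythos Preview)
Your proposal takes a different route from the paper, and the key step you flag as problematic (Step~3) is in fact where the approach breaks down as written. The paper's contact capacity, defined as $c(\mathcal V)=\sup\{\lceil c(\phi)\rceil : \phi\in\text{Cont}(\mathcal V)\}$, is an \emph{integer} (indeed $c(\widehat{B(R)})=\lceil R\rceil$), and this single number suffices for Theorem~\ref{thm1} but cannot distinguish balls separated by the fraction $k/l$. Your attempt to recover the $l$ through a ``relative'' capacity $c_{\mathcal W}$ is the crux, but note that if $c_{\mathcal W}(\mathcal U)$ is defined as a supremum over contactomorphisms supported in $\mathcal U\subset\mathcal W$, then $c_{\mathcal W}(\mathcal U)=c(\mathcal U)$ and the ambient constraint contributes nothing. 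You would need a genuinely different definition of $c_{\mathcal W}$, and your proposal gives no concrete candidate for one; the sentence about ``seeing only the periods up to $l-1$'' is a heuristic, not a mechanism.

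The paper avoids this by \emph{not} compressing to a single number. It uses the contact homology groups $G_\ast^{\,(k,\infty]}(\mathcal V)$ directly: by Traynor's computation (Theorem~\ref{thmballs}) together with the lift relation (Theorem~\ref{lift3}), one has $G_{2nl}^{\,(k,\infty]}\big(\widehat{B(R)}\big)=\mathbb{Z}_2$ precisely when $\frac{k}{l}<R\le\frac{k}{l-1}$ and $0$ otherwise, and the inclusion-induced map between two such nonzero groups is an isomorphism. The integer $l$ enters through the \emph{grading} $\ast=2nl$, not through any ambient restriction. The role of $R_3$ is then purely diagram-theoretic: since $\psi\big(\widehat{B(R_3)}\big)=\widehat{B(R_3)}$, contact invariance gives an isomorphism $\psi_\ast$ on $G_{2nl}^{\,(k,\infty]}\big(\widehat{B(R_3)}\big)$, and the commutative diagram of inclusion-induced maps forces the isomorphism $G_{2nl}^{\,(k,\infty]}\big(\widehat{B(R_3)}\big)\to G_{2nl}^{\,(k,\infty]}\big(\widehat{B(R_1)}\big)$ to factor through $G_{2nl}^{\,(k,\infty]}\big(\widehat{B(R_2)}\big)=0$, a contradiction. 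So the argument is a short diagram chase once the homology is computed; no relative invariant is needed, and the ``hard'' computation you anticipate in Step~3 is replaced by the already-available Theorem~\ref{thmballs}.
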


Theorems \ref{thm1} and \ref{thm2} are proved in \cite{EKP} using contact homology of fiberwise starshaped domains in $\mathbb{R}^{2n}\times S^1$. This is a special instance of the Symplectic Field Theory, and is related to a version of the filtered symplectic homology of domains in $\mathbb{R}^{2n}$ as used in \cite{BPS}, \cite{CGK} and \cite{GG}. We will present here a proof of the same results using generating functions instead of holomorphic curves techniques.\\
\\
Generating functions have been studied extensively by many authors in the 80's and 90's. They provide a powerful tool in symplectic and contact topology, with important applications also to many of the central problems of these subjects (see for instance \cite{Ch1}, \cite{LS}, \cite{S}, \cite{S2}, \cite{G}, \cite{V}, \cite{T}, \cite{G2}, \cite{Ch},\cite{Th}, \cite{V3}, \cite{C}, \cite{EG}, \cite{B2}, \cite{Th3}, \cite{M}, \cite{Th2}, \cite{B}, \cite{T2}, \cite{CP05}, \cite{FP06}, \cite{JT06}, \cite{CFP}, \cite{CN}, \cite{CN2}, \cite{ELST08}, \cite{FR}). In particular, Viterbo \cite{V} applied Morse-theoretical methods to the generating function of a Lagrangian submanifold $L$ of the cotangent bundle of a closed manifold $B$ to define invariants $c(u,L) \in \mathbb{R}$ for any $u\in \text{H}^{\ast}(B)$. Using this he could then define an invariant $c(\phi)$ for compactly supported Hamiltonian symplectomorphisms $\phi$ of $\mathbb{R}^{2n}$, which in turn led to the definition of a symplectic capacity for domains in $\mathbb{R}^{2n}$. Among the applications discussed by Viterbo there is in particular the definition of a partial order and a bi-invariant metric on the group of compactly supported Hamiltonian symplectomorphisms of $\mathbb{R}^{2n}$.\\
\\
Extending the work of Viterbo, Traynor \cite{T} defined homology groups for Hamiltonian symplectomorphisms and, via a limit process, domains of $\mathbb{R}^{2n}$. More precisely, for any domain $\mathcal{U}$ in $\mathbb{R}^{2n}$ and any interval $(a,b]$ of $\mathbb{R}$ she defined homology groups $G_{\ast}^{\;\;(a,b]}\,(\mathcal{U})$. She proved that these groups are symplectic invariants and calculated them in the case of open ellipsoids.\\
\\
Some of the above results have been extended to contact topology. In particular, Bhupal \cite{B} defined invariants $c\,(u,L)$ for a Legendrian submanifold $L$ of the 1-jet bundle of a closed manifold $B$ and $u\in \text{H}^{\ast}(B)$. Proceeding as in \cite{V} he then associated a number $c(\phi)$ to each compactly supported contactomorphisms $\phi$ of $\mathbb{R}^{2n+1}$ isotopic to the identity, and used this construction to define a partial order on the groups of all such contactomorphisms. In contrast with the symplectic case, the number $c(\phi)$ is not invariant by conjugation of $\phi$ with another contactomorphism $\psi$. For this reason it is not possible to mimic Viterbo's construction of a symplectic capacity to obtain a contact invariant for domains in $\mathbb{R}^{2n+1}$. However Bhupal could prove that $c(\psi\phi\psi^{-1})=0$ if and only if $c(\phi)=0$, which was all he needed to define the partial order. Our contribution to this problem is the observation that if we consider contactomorphisms of $\mathbb{R}^{2n}\times S^1$, regarded as contactomorphisms of $\mathbb{R}^{2n+1}$ that are 1-periodic in the $z$-coordinate, then the methods of Bhupal can be used to show that $c(\psi\phi\psi^{-1})=k$ if and only if $c(\phi)=k$, where $k$ is any positive integer. In particular this implies that the integer part of $c(\phi)$ is invariant by conjugation, and this fact can be used to define an integral contact invariant for domains in $\mathbb{R}^{2n}\times S^1$. In analogy with the symplectic case we call this invariant a contact capacity. Given a domain $\mathcal{U}$ in $\mathbb{R}^{2n}$, we prove that the contact capacity of $\widehat{\mathcal{U}}$ equals the integer part of the Viterbo capacity of $\mathcal{U}$. This then easily yields a proof of Theorem \ref{thm1} (see \ref{contact_cap}).\\
\\
Similar observations can be made about homology groups. Using the set-up of Bhupal, it is possible to extend the construction of Traynor to the contact case and get homology groups $G_{\ast}^{\;\;(a,b]}\,(\mathcal{V})$ for a domain $\mathcal{V}$ of $\mathbb{R}^{2n+1}$. These groups however are not contact invariant, but they become so in the 1-periodic case if we consider only integer values of $a$ and $b$.\\
\\
The crucial fact that explains the special role played by the integers in the contact 1-periodic case is the following. In the symplectic case there is a 1-1 correspondence between critical points of the generating function of a Hamiltonian symplectomorphism $\phi$ and the fixed points of $\phi$. Moreover, critical values are given by the symplectic action of the corresponding fixed points. Since the symplectic action is invariant by conjugation it follows that the generating functions of $\phi$ and of $\psi\phi\psi^{-1}$ have the same critical values. This fundamental fact can be used to prove that Viterbo capacity and Traynor's homology groups are symplectic invariants (see \ref{Vit_capacity} and \ref{sympl_hom}). The same argument does not apply to the contact case. Given a contactomorphism $\phi$ of $\mathbb{R}^{2n+1}$ we will see in \ref{contactomorphisms} that critical points of the generating function of $\phi$ with critical value $c$ correspond to points $(x,y,z)$ of $\mathbb{R}^{2n+1}$ such that $\phi(x,y,z)=(x,y,z+c)$. Thus the generating functions of $\phi$ and of $\psi\phi\psi^{-1}$ do not have the same critical values in general. However, if one of the two functions has $0$ as critical value then so does the other as well, because critical points with critical value $0$ correspond to fixed points. Similarly, in the 1-periodic case the same holds if we replace $0$ by any integer $k$. We will explain in \ref{contact_cap} and \ref{2.3} how this observation implies that our homology groups and integral capacity for domains of $\mathbb{R}^{2n}\times S^1$
are contact invariants.\\
\\
We will now show how one can use our construction of contact homology to prove Theorems \ref{thm1} and \ref{thm2}, referring to \ref{2.3} for all technical details.\\
\\
Assume we have $R_1$, $R_2$, $R_3$ with $R_2\leq\frac{k}{l}< R_1<R_3\leq\frac{k}{l-1}$. We have to show that $\widehat{B(R_1)}$ cannot be mapped into $\widehat{B(R_2)}$ by a contactomorphism $\psi$ of $\mathbb{R}^{2n}\times S^1$ such that $\psi\,\big(\widehat{B(R_3)}\big)=\widehat{B(R_3)}$. Suppose this can be done. Then we can consider the following commutative diagram:

\begin{displaymath}
\xymatrix{
 G_{\ast}^{\;\;(k,\infty]}\,(\widehat{B(R_3)}) \ar[r] &
 G_{\ast}^{\;\;(k,\infty]}\,(\widehat{B(R_1)})  \\
 G_{\ast}^{\;\;(k,\infty]}\,(\widehat{B(R_3)}) \ar[u]^{\psi_{\ast}} \ar[r] & 
 G_{\ast}^{\;\;(k,\infty]}\,(\widehat{B(R_2)}) \ar[r] & 
 G_{\ast}^{\;\;(k,\infty]}\,\big(\psi(\widehat{B(R_1)})\big) \ar[ul]_{\psi_{\ast}}}
\end{displaymath}
where the horizontal arrows denote the homomorphisms induced by inclusions (see Theorem \ref{monot}) and the vertical ones are isomorphisms induced by $\psi$ (see Theorem \ref{cont_inv}). Consider $\mathbb{Z}_2$-coefficients, and $\ast=2nl$. Then by Theorems \ref{thmballs} and \ref{lift3} we know that $G_{\ast}^{\;\;(k,\infty]}\,(\widehat{B(R_2)})=0$, 
$G_{\ast}^{\;\;(k,\infty]}\,(\widehat{B(R_1)})=G_{\ast}^{\;\;(k,\infty]}\,(\widehat{B(R_3)})=\mathbb{Z}_2$, and that the horizontal map on the top is an isomorphism. Thus the diagram gives a contradiction, yielding the proof of Theorem \ref{thm2}. Theorem \ref{thm1} can be proved similarly, considering $\ast=2n$ and a big enough $R_3$.\\
\\
This article is organized as follows.\\
\\
In Section \ref{sympl} we describe the constructions by Viterbo and Traynor of a symplectic capacity and symplectic homology for domains in $\mathbb{R}^{2n}$. In \ref{sympl_hom} we define homology groups for compactly supported Hamiltonian symplectomorphisms of $\mathbb{R}^{2n}$ and use them to construct, via a limit process, symplectic homology of domains. The limit process is based on the Viterbo partial order on $\text{Ham}^c\,(\mathbb{R}^{2n})$, which we discuss in \ref{Vitorder}. The Viterbo capacity is described in \ref{Vit_capacity}. The partial order and the capacity are defined using the invariants for Hamiltonian symplectomorphisms introduced by Viterbo. We discuss these invariants in \ref{Vit_inv} and \ref{inv_symplectomorphisms}. In \ref{lagrsubmfds} and \ref{symplectomorphisms} we give the needed preliminaries on generating functions. In this section we always follow \cite{T} and \cite{V} except for the following points: we give a different proof of symplectic invariance of the homology groups (Proposition \ref{conjsympl}); monotonicity of the invariant $c(\phi)$ is proved directly in [Vit92, Proposition 4.6] while for us is an immediate consequence of Proposition \ref{parordsymp}.\\
\\
In Section \ref{contact} we generalize the results of Section \ref{sympl} to the contact case. In \ref{contact_cap} and \ref{2.3} respectively we construct a contact capacity and contact homology groups for domains in  $\mathbb{R}^{2n}\times S^1$. The limit process to define contact homology of domains is based on the Bhupal partial order on the group of contactomorphisms of $\mathbb{R}^{2n+1}$, which we discuss in \ref{Bhupalorder}. All the constructions in this section use the generalization of the Viterbo's invariants to contactomorphisms of $\mathbb{R}^{2n+1}$ and $\mathbb{R}^{2n}\times S^1$. We discuss these invariants in \ref{s_inv_leg} and \ref{inv_contactomorphisms}. In \ref{legendrian} and \ref{contactomorphisms} we give respectively some preliminaries on generating functions in contact topology, and a more detailed discussion of generating functions for contactomorphisms of $\mathbb{R}^{2n+1}$.

\subsection*{Acknowledgements}
I am very grateful to Lisa Traynor for encouraging this project from the very beginning, as well as to Josh Sabloff for his interest in my work. Mohan Bhupal and David Th\'{e}ret kindly sent me a copy of their theses; both of them have been very important to develop this work. I have been learning a lot in discussions with Francisco Presas. In particular, I thank him for a crucial suggestion that dramatically improved my understanding of the most important point of this article. I also thank David Martinez Torres for helping me with my problems with Morse theory, and the anonymous referee for a very helpful review. During the preparation of this article I had the opportunity to talk about my research with Yakov Eliashberg and Leonid Polterovich. I thank them both for patiently listening to me, and for enlightening (when not obscure for me) comments. I also thank Emmanuel Giroux for writing about my work in his Bourbaki talk. This has been not only a great honour for me but also a big help, since every time I started doubting about what I wrote I could always turn to his paper and believe everything again. Finally, and most of all, I am especially gratefull to my Ph.D. supervisor Miguel Abreu. All the progress I made since I am in Lisbon, as a person as well as an apprentice mathematician, would never have been possible without his sensitive help and steady support.\\
\\
My research was supported by an FCT graduate fellowship, program POCTI-Research Units Pluriannual Funding Program through the Center for Mathematical Analysis Geometry and Dynamical Systems and Portugal/Spain cooperation
grant FCT/CSIC-14/CSIC/08.

\section{Symplectic Capacity and Homology for Domains in $\mathbb{R}^{2n}$}\label{sympl}

We refer to \cite{MS} for preliminaries on symplectic topology. Here we only discuss some basic concepts that are needed for the rest of the article.\\
\\
A symplectic manifold is an even dimensional manifold $M$ endowed with a symplectic form, i.e a non-degenerate closed 2-form $\omega\in\Omega^2(M)$. A symplectic manifold $(M,\omega)$ is said to be exact if $\omega=-d\lambda$ for some 1-form $\lambda$ which is then called a Liouville form. In this paper we will only deal with the following two (exact) symplectic manifolds: the standard symplectic euclidean space 
$\big(\mathbb{R}^{2n},\, \omega_0=-d\,(ydx)\big)$ and the cotangent bundle $T^{\ast}B$ of a manifold $B$, endowed with the canonical symplectic form $\omega_{\text{can}}=-d\,(pdq)$ where $q$ is the coordinate on the base and $p$ on the fiber. A diffeomorphism $\phi$ of a symplectic manifold $(M,\omega)$ is called a symplectomorphism if $\phi^{\ast}\omega=\omega$. Given a time-dependent function $H_t$ on $M$, the flow $\phi_t$ of the time-dependent vector field $X_t$ defined by the condition $i_{X_t}\omega=-dH_t$ consists of symplectomorphisms. The isotopy $\phi_t$ is called a Hamiltonian isotopy, with Hamiltonian function $H_t$. A Hamiltonian symplectomorphism of $(M,\omega)$ is a symplectomorphism that can be obtained as the time-1 map of a Hamiltonian isotopy. An immersion $i:L\rightarrow (M,\omega)$ is called isotropic if $i^{\ast}\omega=0$ and Lagrangian if moreover the dimension of $L$ is maximal, i.e. half of the dimension of $M$. If $(M,\omega)$ is exact with Liouville form $\lambda$, then a Lagrangian immersion $i:L\rightarrow (M,\omega)$ is called exact if $i^{\ast}\lambda=df$ for some function $f$.\\
\\
Consider an exact symplectic manifold $(M,\omega=-d\lambda)$. The \textbf{action functional} $\mathcal{A}_H$ with respect to a time-dependent Hamiltonian $H$ is defined by 
$$\mathcal{A}_H(\gamma):=\int_{t_0}^{t_1}\Big(\lambda\big(\dot{\gamma}(t)\big)+H_t\,\big(\gamma(t)\big)\Big)\:dt$$
for a path $\gamma: [t_0,t_1]\rightarrow M$.
A crucial fact is that $\gamma$ is a critical point of $\mathcal{A}_H$ (with respect to variations with fixed endpoints) if and only if it is an integral curve of the Hamiltonian flow of $H$. Moreover we have the following lemma.

\begin{llemma}[\cite{MS}, 9.19]\label{hamis}
Let $\phi_t$, $t\in [0,1]$, be a symplectic isotopy of an exact symplectic manifold $\big(M,\omega=-d\lambda\big)$, starting at the identity. Then $\phi_t$ is a Hamiltonian isotopy if and only if $\phi_t^{\;\ast}\lambda-\lambda=dF_t$ for a smooth family of functions $F_t:M\longrightarrow\mathbb{R}$. In this case the $F_t$ are given by
$$F_t=\int_0^t\big(\lambda(X_s)+H_s\big)\circ \phi_s\:ds $$
where $X_t$ is the vector field generating $\phi_t$, and $H_t:M\longrightarrow\mathbb{R}$ the corresponding Hamiltonian function. In other words, the value of $F_t$ at a point $q$ of $M$ is the action functional with respect to $H$ of the path $\phi_s(q)$, $s\in [0,t]$.
\end{llemma}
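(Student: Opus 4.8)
The plan is to differentiate everything in $t$ and let Cartan's magic formula do the work. Write $X_t$ for the time-dependent vector field generating $\phi_t$, so that $\frac{d}{dt}\phi_t=X_t\circ\phi_t$, and recall the naturality of $d$ under pullback, $\phi_t^{\,\ast}(d\alpha)=d(\phi_t^{\,\ast}\alpha)$. The single computation underlying both implications is
$$\frac{d}{dt}\,\phi_t^{\,\ast}\lambda \;=\; \phi_t^{\,\ast}\big(\mathcal{L}_{X_t}\lambda\big)\;=\;\phi_t^{\,\ast}\big(d\,(i_{X_t}\lambda)+i_{X_t}d\lambda\big)\;=\;d\big(\lambda(X_t)\circ\phi_t\big)-\phi_t^{\,\ast}(i_{X_t}\omega),$$
where we used $d\lambda=-\omega$ and $i_{X_t}\lambda=\lambda(X_t)$. (The standing hypothesis that $\phi_t$ is a symplectic isotopy --- equivalently that $i_{X_t}\omega$ is closed for all $t$, by the analogous computation for $\omega$ --- will not actually be needed on its own: in one direction it is used directly through the Hamiltonian, and in the other it is forced by the conclusion.)

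For the implication ``Hamiltonian $\Rightarrow$ exact'': assuming $i_{X_t}\omega=-dH_t$, the last term above becomes $\phi_t^{\,\ast}\,dH_t=d(H_t\circ\phi_t)$, so $\frac{d}{dt}(\phi_t^{\,\ast}\lambda-\lambda)=d\big((\lambda(X_t)+H_t)\circ\phi_t\big)$. Integrating from $0$ to $t$ and using $\phi_0=\mathrm{id}$, so that the boundary term $\phi_0^{\,\ast}\lambda-\lambda$ vanishes, gives $\phi_t^{\,\ast}\lambda-\lambda=dF_t$ with $F_t=\int_0^t(\lambda(X_s)+H_s)\circ\phi_s\,ds$, which is exactly the asserted formula. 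Conversely, if $\phi_t^{\,\ast}\lambda-\lambda=dF_t$ for a smooth family $F_t$ (which we may normalize so that $F_0=0$, since $dF_0=0$), then differentiating and comparing with the displayed identity gives $\phi_t^{\,\ast}\big(d(\lambda(X_t))-i_{X_t}\omega\big)=d\dot F_t$; applying $(\phi_t^{-1})^{\ast}$ and rearranging yields $i_{X_t}\omega=d\big(\lambda(X_t)-\dot F_t\circ\phi_t^{-1}\big)$, so $\phi_t$ is Hamiltonian with the smooth Hamiltonian $H_t:=\dot F_t\circ\phi_t^{-1}-\lambda(X_t)$ (for which one checks $(\lambda(X_s)+H_s)\circ\phi_s=\dot F_s$, consistent with the integral formula).

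Finally, the ``in other words'' clause is a matter of unwinding the definition of $\mathcal{A}_H$: for fixed $q$ the path $s\mapsto\phi_s(q)$ has velocity $X_s(\phi_s(q))$ at time $s$, so $\mathcal{A}_H\big(\phi_\bullet(q)|_{[0,t]}\big)=\int_0^t\big(\lambda(X_s(\phi_s(q)))+H_s(\phi_s(q))\big)\,ds=F_t(q)$.

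I do not expect a genuine obstacle: the argument is entirely formal. The only points requiring care are the sign bookkeeping under the paper's conventions $\omega=-d\lambda$ and $i_{X}\omega=-dH$, and the observation that $dF_t$ pins down $F_t$ only up to a $t$-dependent locally constant function --- harmless, since this drops out of $d\dot F_t$ and since the forward direction exhibits an explicit representative anyway.
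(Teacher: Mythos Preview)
Your proof is correct and is precisely the standard argument via Cartan's formula; the paper does not supply its own proof of this lemma but simply cites \cite[9.19]{MS}, so there is nothing to compare against. The sign conventions you tracked ($\omega=-d\lambda$, $i_{X_t}\omega=-dH_t$) match those of the paper.
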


The action functional plays a central role in symplectic topology. It is also related in a crucial way to generating functions and thus to the invariants defined by Traynor and Viterbo that we are going to discuss in this section.

\subsection{Generating functions for Lagrangian submanifolds of $T^{\ast}B$}\label{lagrsubmfds}

Consider a smooth manifold $B$. Given a function $f:B\rightarrow \mathbb{R}$, the graph of its differential is a Lagrangian submanifold $L_f$ of $T^{\ast}B$. Many geometric properties of $L_f$ can be inferred by looking at $f$, the most immediate instance of this being the fact that critical points of $f$ correspond to intersection points of $L_f$ with the 0-section. The idea of generating functions is to generalize this construction in order to be able to  associate a function to a more general class of Lagrangian submanifolds of $T^{\ast}B$. This can be achieved by considering functions defined on the total space of a fiber bundle over $B$, and by using the following construction  which is due to H\"{o}rmander.

\begin{defi}[\cite{H}]
A \emph{variational family} $(E,S)$ over a manifold $B$ is a function $S:E \longrightarrow \mathbb{R}$ defined on the total space of a fiber bundle $p:E \longrightarrow B $. $(E,S)$ is a \emph{transverse variational family} if $dS: E\longrightarrow T^{\ast}E$ is transverse to $N_E:=\{\:(e,\eta)\in T^{\ast}E \;|\; \eta \equiv 0 \;\emph{on} \;\emph{ker}\;\emph{d}p\,(e)\:\}$.
\end{defi}

Consider the set $\Sigma_S$ of \textit{fiber critical points} of $S$, i.e. points $e$ of $E$ that are critical points of the restriction of $S$ to the fiber through $e$. Note that $\Sigma_S=(dS)^{-1}(N_E)$, so if the variational family $(E,S)$ is transverse then $\Sigma_S$ is a submanifold of $E$ of dimension equal to the dimension of $B$. To any $e$ in $\Sigma_S$ we can associate an element $v^{\ast}(e)$ of $T^{\phantom{p}\ast}_{p(e)}B$ (the \textit{Lagrange multiplier}) defined by $v^{\ast}(e)\,(X):=dS\,(\widehat{X})$ for $X \in T_{p(e)}B$, where $\widehat{X}$ is any vector in $T_eE$ with $p_{\ast}(\widehat{X})=X$.

\begin{prop}\label{horm}
If $(E,S)$ is a transverse variational family over $B$, then $i_S:\Sigma_S\longrightarrow T^{\ast}B$, $e\mapsto \big(p(e),v^{\ast}(e)\big)$ is a Lagrangian immersion.
\end{prop}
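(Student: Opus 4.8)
The plan is to work locally and verify three things in turn: that $i_S$ is well-defined, that it is an immersion, and that its image is isotropic (hence Lagrangian, since $\dim \Sigma_S = \dim B$ forces the image to have half the dimension of $T^\ast B$).

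First I would check that $v^\ast(e)$ is well-defined, i.e.\ independent of the choice of lift $\widehat{X} \in T_e E$ with $p_\ast(\widehat{X}) = X$. Two such lifts differ by a vector in $\ker dp(e)$, so the claim is exactly that $dS(e)$ annihilates $\ker dp(e)$ — but this is precisely the statement that $e \in \Sigma_S = (dS)^{-1}(N_E)$. So $v^\ast$ is a well-defined section of $p^\ast T^\ast B$ over $\Sigma_S$, and $i_S$ is a smooth map. The transversality hypothesis enters here to guarantee $\Sigma_S$ is actually a submanifold of the expected dimension $\dim B$, via the preimage theorem applied to $dS \pitchfork N_E$ (note $N_E$ has codimension equal to the fiber dimension in $T^\ast E$).

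Next, for the immersion and Lagrangian properties I would choose local fiber-bundle coordinates, writing $E$ locally as $B \times \mathbb{R}^N$ with coordinates $(q, \xi)$, so $S = S(q,\xi)$. Then $\Sigma_S = \{(q,\xi) : \partial_\xi S = 0\}$, transversality becomes the statement that $(\partial_\xi S, \partial^2_{q\xi}S, \partial^2_{\xi\xi}S)$ has full rank $N$ along $\Sigma_S$ (equivalently $\partial_\xi S$ is a submersion onto $0$, i.e.\ the $N \times (n+N)$ matrix of second derivatives $(\partial^2_{q\xi}S \mid \partial^2_{\xi\xi}S)$ has rank $N$), and one computes that $v^\ast(e) = \partial_q S(q,\xi)$ for $(q,\xi) \in \Sigma_S$. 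Thus $i_S(q,\xi) = (q, \partial_q S(q,\xi))$. To see this is an immersion: its differential restricted to $T\Sigma_S$ is injective because already the $q$-component $(q,\xi)\mapsto q$ composed with nothing loses information only in fiber directions, and on a tangent vector to $\Sigma_S$ that projects to $0$ in the base one shows using the rank condition that it must vanish. To see the image is isotropic, I would pull back the Liouville form: $i_S^\ast(p\,dq) = \partial_q S \, dq$ along $\Sigma_S$, and on $\Sigma_S$ one has $dS|_{\Sigma_S} = \partial_q S\, dq + \partial_\xi S \, d\xi = \partial_q S\, dq$ since $\partial_\xi S = 0$ there; hence $i_S^\ast(p\,dq) = d(S|_{\Sigma_S})$ is exact, so $i_S^\ast \omega_{\mathrm{can}} = -d\,i_S^\ast(p\,dq) = -d^2(S|_{\Sigma_S}) = 0$. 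Combined with the dimension count, $i_S$ is a Lagrangian immersion (in fact an exact one, with primitive $S|_{\Sigma_S}$).

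The main obstacle I anticipate is not conceptual but bookkeeping: making the local computation of $v^\ast(e) = \partial_q S$ and the rank/immersion argument coordinate-free enough to patch over the base, and carefully unwinding the definition of $N_E$ and the transversality condition into the concrete rank statement on the Hessian block $(\partial^2_{q\xi}S \mid \partial^2_{\xi\xi}S)$. Once that dictionary is in place, the isotropy is a one-line exterior-derivative computation and the dimension count finishes it. I would also remark that $i_S$ need not be an embedding in general — it is only an immersion — which is why the statement is phrased that way.
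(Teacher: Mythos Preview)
Your argument is correct and is the standard proof of H\"ormander's lemma: well-definedness of $v^\ast$ from the fiber-critical condition, the local-coordinate identification $i_S(q,\xi)=(q,\partial_q S)$, the immersion property via the rank-$N$ condition on the block $(\partial^2_{q\xi}S\mid\partial^2_{\xi\xi}S)$, and exact isotropy from $i_S^\ast(p\,dq)=d(S|_{\Sigma_S})$ together with the dimension count. The paper itself does not prove this proposition---it simply refers the reader to \cite[9.34]{MS} and records the exactness formula $i_S^\ast\lambda_{\mathrm{can}}=d(S|_{\Sigma_S})$ as a remark---so your sketch is precisely the content being cited rather than a different route.
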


In this case, $S:E \longrightarrow \mathbb{R}$ is called a \textbf{generating function} for the Lagrangian submanifold $L_S:=i_S\,(\Sigma_S)$ of $T^{\ast}B$. 
Note that (non-degenerate) critical points of $S$ correspond under $i_S$ to (transverse) intersection points of $L_S$ with the 0-section. Note also that $i_S$ is an exact Lagrangian immersion, with $ i_S^{\phantom{S}\ast}\, \lambda_{\text{can}}=d\,(S_{|\Sigma_S})$. A proof of Proposition \ref{horm} can be found for instance in \cite[9.34]{MS}.\\
\\
A crucial example of this construction is given by the case in which $E$ is the space of paths $\gamma: [0,1]\rightarrow T^{\ast}B$ that begin at the 0-section. $E$ can be seen as a fiber bundle over $B$ with projection $\gamma \mapsto \pi\,\big(\gamma(1)\big)$, where $\pi$ is the projection of $T^{\ast}B$ into $B$. Given a time-dependent Hamiltonian $H$ on $T^{\ast}B$ we can define a function $S:E\rightarrow \mathbb{R}$ by $S(\gamma):=\mathcal{A}_H(\gamma)$. Then $\Sigma_S$ is the set of orbits of the Hamiltonian flow of $H$ and the Lagrange multiplier of an element $\gamma$ of $\Sigma_S$ is the vertical component of $\gamma(1)$. Thus $S$ generates
the image of the 0-section under the time-1 map of the Hamiltonian flow of $H$. Note that $S$ is not a generating function in the sense of the above definition because $E$ has infinite dimensional fibers. However, it is possible to approximate $E$ by a finite dimensional space and prove in this way that any Lagrangian submanifold of $T^{\ast}B$ which is Hamiltonian isotopic to the 0-section has a generating function. This was done by Sikorav, using the broken Hamiltonian trajectories idea of \cite{Ch1} and \cite{LS}. It was also proved that by this construction one can obtain in fact a generating function which is quadratic at infinity in the following sense.

\begin{defi}
A generating function $S:E \longrightarrow \mathbb{R}$ for a Lagrangian submanifold of $T^{\ast}B$ is \textbf{quadratic at infinity} if $p:E \longrightarrow B $ is a vector bundle and if there exists a non-degenerate quadratic form $Q_{\infty}: E \longrightarrow \mathbb{R}$ such that $dS-\partial_vQ_{\infty}: E \longrightarrow E^{\ast}$ is bounded, where $\partial_v$ denotes the fiber derivative.
\end{defi}

This condition is important because it makes possible to apply to generating functions all arguments of Morse theory, even though the functions are not defined on a compact manifold.

\begin{thm}[\cite{S}, \cite{S2}]\label{existence}
If $B$ is closed, then any Lagrangian submanifold of $T^{\ast}B$ which is Hamiltonian isotopic to the 0-section has a generating function quadratic at infinity (g.f.q.i.). More generally, if $L\subset T^{\ast}B$ has a g.f.q.i. and $\psi_t$ is a Hamiltonian isotopy of $T^{\ast}B$, then there exists a continuous family of g.f.q.i. $S_t:E\longrightarrow\mathbb{R}$ such that each $S_t$ generates the corresponding $\psi_t(L)$.
\end{thm}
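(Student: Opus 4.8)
The plan is to follow Sikorav's approach via broken Hamiltonian trajectories: decompose the given isotopy into short-time pieces, each $C^{1}$-close to the identity, and assemble the generating function by iterating a single composition step. The core is a \emph{composition lemma}: if $L\subset T^{\ast}B$ has a g.f.q.i. $S\colon E\to\mathbb{R}$ on a vector bundle $p\colon E\to B$ with non-degenerate quadratic model $Q_{\infty}$, and $\phi$ is a symplectomorphism of $T^{\ast}B$ sufficiently $C^{1}$-close to the identity, then $\phi(L)$ has a g.f.q.i. $S'\colon E'\to\mathbb{R}$, where $E'$ is obtained from $E$ by adjoining, per piece, one base-type variable $q\in B$ and one covector variable $\eta\in\mathbb{R}^{n}$, and where $S'$ is, schematically, $S$ plus a generating function of $\phi$ written in the coordinates given by the initial base point and the final momentum, plus an auxiliary pairing $\langle\,q-p(e),\eta\,\rangle$. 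I would prove this directly from Proposition~\ref{horm}: computing $\Sigma_{S'}$ and its Lagrange multiplier shows that the fiber-critical set corresponds to the pairs $(z,\phi(z))$ with $z\in L$, so $L_{S'}=\phi(L)$; and the quadratic-at-infinity model of $S'$ is $Q_{\infty}$ together with the non-degenerate pairing term, the remaining $\phi$-contribution being a bounded (small) perturbation, so $S'$ is again a g.f.q.i.

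The second ingredient is that the short pieces do admit a generating function of this mixed type. For $\phi$ $C^{1}$-close to the identity I would exhibit its generating function as $\langle q,\eta\rangle+h_{\phi}(q,\eta)$, where $h_{\phi}$ is obtained by integrating the action $1$-form $p\,dq-H\,dt$ along the short Hamiltonian arc joining the point with initial base coordinate $q$ to the point with final momentum $\eta$; in these coordinates $\phi$ is globally a graph, the non-degenerate quadratic direction is the pairing $\langle q,\eta\rangle$, and $h_{\phi}$ together with its first derivatives is uniformly small. The uniformity uses compactness of $B$ (in the first assertion) or an appropriate behaviour of $\psi_{t}$ at infinity.

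For the iteration, given $\psi_{t}$ set $\psi^{j}_{t}:=\psi_{jt/N}\circ\psi_{(j-1)t/N}^{-1}$, so that $\psi_{t}=\psi^{N}_{t}\circ\cdots\circ\psi^{1}_{t}$; by uniform continuity on the compact interval $[0,1]$ one can fix a single $N$ for which every $\psi^{j}_{t}$ is $C^{1}$-close enough to the identity to apply the composition lemma. Starting from the given g.f.q.i. for $L$ — and, for the first assertion, from the tautological g.f.q.i. $S\equiv 0$ on $E=B$ for the $0$-section — one applies the composition lemma $N$ times; tracking the fiber-critical correspondence through each step gives $L_{S_{t}}=\psi_{t}(L)$. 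The resulting total space $E$ has fiber dimension depending only on $N$ and $n$, hence is independent of $t$, and since the action integral and all the gluing data depend smoothly on their arguments, $t\mapsto S_{t}$ is continuous (in fact smooth), which is the asserted continuous family.

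The main obstacle is controlling quadraticity at infinity through the iteration: the cotangent fibers are non-compact, so one must check that after the $N$-fold composition the error $dS_{t}-\partial_{v}Q_{\infty}\colon E\to E^{\ast}$ remains globally bounded. This is what forces the pieces $\psi^{j}_{t}$ to be uniformly $C^{1}$-small — hence the need for a fine subdivision and for compactness of $[0,1]$ — and, when $B$ is non-compact, the compact-support hypothesis; it is also the reason the small-piece generating function of the second step must be made genuinely global rather than defined only near the diagonal. Everything else (the Lagrange-multiplier bookkeeping, the smooth dependence on $t$) is routine once these uniform estimates are in place.
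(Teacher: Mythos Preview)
The paper does not give its own proof of this theorem: it is stated with attribution to Sikorav \cite{S}, \cite{S2}, and the only indication of method is the sentence immediately preceding it, ``This was done by Sikorav, using the broken Hamiltonian trajectories idea of \cite{Ch1} and \cite{LS}.'' Your proposal is precisely the broken-trajectories/composition-formula approach the paper alludes to, and the outline --- subdivide the isotopy into $C^{1}$-small pieces, produce a generating function for each short piece in mixed $(q,\eta)$-coordinates, and iterate a composition lemma starting from a given g.f.q.i. --- is the standard correct one.

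One small point of imprecision: you write that at each step you adjoin ``one base-type variable $q\in B$'', but $B$ is a closed manifold, not a vector space, so the extra fiber directions cannot literally be copies of $B$. In the actual construction the auxiliary variables live in $\mathbb{R}^{n}\times(\mathbb{R}^{n})^{\ast}$ (or in $T^{\ast}B$ itself, pulled back suitably), and one must either work in charts or, more cleanly, observe that the composed generating function is defined on a genuine vector bundle over $B$ whose fiber is the product of the old fiber with these linear pieces. This is exactly where compactness of $B$ is used to make the mixed-coordinate description of the short-time pieces global and to keep the quadratic-at-infinity estimate uniform. Once you make this precise, the rest of your argument goes through as written.
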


A second fundamental result is the uniqueness theorem of Viterbo and Th\'{e}ret, which says that all generating functions of a Lagrangian submanifold of $T^{\ast}B$ which is Hamiltonian isotopic to the 0-section are related by some basic operations that do not affect the Morse theory of the function. As a consequence, all the invariants defined using generating functions do not depend on the choice of the specific generating function used to calculate them.

\begin{thm}[\cite{V}, \cite{Th2}]\label{uniqueness}
Suppose that $B$ is closed, and let $L$ be a Lagrangian submanifold of $T^{\ast}B$ Hamiltonian isotopic to the 0-section. If $S:E \longrightarrow \mathbb{R}$ is a g.f.q.i. for $L$ then any other g.f.q.i. $S'$ for $L$ can be obtained from $S$ by the following operations:
\vspace{-0.2cm}
\begin{itemize}
\item addition of a constant: $S'=S+c:E \longrightarrow \mathbb{R}$, for some $c\in \mathbb{R}$;
\item fiber-preserving diffeomorphism: $S'=S\circ\phi$, for some fiber-preserving diffeomorphism $\phi:E'\longrightarrow E$;
\item stabilization (assuming $p:E \longrightarrow B $ is a vector bundle): $S'=S+Q: E'=E\oplus F\longrightarrow \mathbb{R}$, where $F\longrightarrow B$ is a vector bundle and $Q:F\longrightarrow\mathbb{R}$ is a non-degenerate quadratic form.
\end{itemize}
\end{thm}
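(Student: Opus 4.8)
The plan is to show that any two generating functions quadratic at infinity (g.f.q.i.) for the same $L$ become equal after each has been put through the three listed operations; write $\sim$ for the equivalence relation these generate, so the claim is that all g.f.q.i.\ for $L$ are $\sim$-equivalent. I would split the argument into a normalization step and an interpolation step: first make the two functions agree at infinity and to second order along their fiber critical sets, then connect them by a path of g.f.q.i.\ for $L$ and convert that path, by a parametrized Morse lemma, into the required fiber-preserving diffeomorphism.

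For the normalization, let $S:E\to\mathbb{R}$ and $S':E'\to\mathbb{R}$ be g.f.q.i.\ for $L$. Using stabilization (a positive-definite fiberwise quadratic form is non-degenerate, so any vector bundle over the closed manifold $B$ can be stabilized to a trivial one) together with fiber-preserving diffeomorphisms, one may assume $E=E'$ and that the quadratic forms at infinity coincide; call the common form $Q_{\infty}$. Since $L$ is Hamiltonian isotopic to the zero section it is exact, $\lambda_{\text{can}}|_L=dh$ with $h$ unique up to a constant, and since $i_S^{\ast}\lambda_{\text{can}}=d(S_{|\Sigma_S})$ this forces $S_{|\Sigma_S}=h\circ i_S$ up to a constant; after adding a constant to $S'$ the two functions carry the same primitive, so $S_{|\Sigma_S}$ and $S'_{|\Sigma_{S'}}$ agree once $\Sigma_S$ and $\Sigma_{S'}$ are identified with $B$ via $p$. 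Because $p$ restricts to a diffeomorphism on each of $\Sigma_S,\Sigma_{S'}$, these are compact sections of $E$, so a fiber-preserving diffeomorphism equal to the identity outside a compact set (hence leaving $Q_{\infty}$ untouched) carries $\Sigma_{S'}$ onto $\Sigma_S$, and a further such diffeomorphism, a parametrized linear change of fiber coordinates near $\Sigma_S$, matches the transverse Hessians (non-degenerate of equal index, after one more stabilization if needed) and the induced Lagrangian immersions. After all this, $S$ and $S'$ are g.f.q.i.\ for $L$ on the same $E$, with the same $Q_{\infty}$ at infinity, the same fiber critical set $\Sigma$, and the same $2$-jet along $\Sigma$.

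For the interpolation, consider $S_u=(1-u)S+uS'$, $u\in[0,1]$. Since $dS-\partial_v Q_{\infty}$ and $dS'-\partial_v Q_{\infty}$ are bounded, so is $dS_u-\partial_v Q_{\infty}$; thus each $S_u$ is quadratic at infinity with quadratic form $Q_{\infty}$, and in particular its fiber critical set lies in a fixed compact set, uniformly in $u$. Near $\Sigma$ the common non-degenerate $2$-jet forces the fiber critical set of $S_u$ to be exactly $\Sigma$ and its induced immersion to be $i_S$. The one thing left to rule out is spurious fiber critical points of $S_u$ in the compact region away from $\Sigma$ (there the fiber derivatives of $S$ and of $S'$ are both nonzero but may point differently, so a straight-line combination could vanish); I would arrange this by a preliminary fiber-preserving deformation making the fiber derivatives of $S$ and $S'$ compatible off $\Sigma$, keeping all estimates uniform in the fiber direction so as not to leave the class of g.f.q.i. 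Once $u\mapsto S_u$ is a path of g.f.q.i.\ for the fixed Lagrangian $L$ with the restriction of each $S_u$ to $\Sigma$ independent of $u$, a parametrized version of the Morse lemma — applied fiberwise over $B$ and over $u$, with fiber estimates uniform enough to preserve the quadratic-at-infinity behavior — yields a continuous family of fiber-preserving diffeomorphisms $\Phi_u$ of $E$ with $\Phi_0=\mathrm{id}$ and $S_u=S_0\circ\Phi_u$; at $u=1$ this reads $S'=S\circ\Phi_1$, one of the allowed operations.

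The main obstacle is exactly this uniform control in the interpolation step: keeping $S_u$ a genuine g.f.q.i.\ for $L$ for every $u$ while the fibers are non-compact, which means controlling the fiber critical locus of $S_u$ and the Morse-lemma diffeomorphisms at infinity. This is the point at which Viterbo's original argument was incomplete and where the estimates of Th\'{e}ret are required. An alternative that softens the analysis is to first invoke Theorem~\ref{existence} together with the composition calculus for generating functions to reduce to the case where $L$ is the zero section: then $S_{|\Sigma_S}$ is locally constant, the normalization simplifies, the parametrized Morse lemma is cleaner to run, and the general case follows by composing with a fixed generating function for the graph of a Hamiltonian symplectomorphism sending the zero section to $L$.
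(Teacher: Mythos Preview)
The paper does not give a proof of this theorem: it is stated with attribution to \cite{V} and \cite{Th2} and used as a black box throughout. So there is nothing in the paper to compare your argument against; the relevant comparison is with Th\'{e}ret's proof in \cite{Th2}.

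Your overall architecture---normalize, interpolate, then apply a parametrized Morse-type lemma---is indeed the shape of Th\'{e}ret's argument, and you correctly identify the delicate point (uniform control at infinity during the interpolation). However, your normalization step contains a genuine error. You write that ``$p$ restricts to a diffeomorphism on each of $\Sigma_S,\Sigma_{S'}$, [so] these are compact sections of $E$.'' This is false in general: $p|_{\Sigma_S}=\pi\circ i_S$ where $\pi:T^{\ast}B\to B$ is the bundle projection, and $\pi|_L$ is a diffeomorphism onto $B$ only when $L$ happens to be a section of $T^{\ast}B$. For a generic $L$ Hamiltonian isotopic to the zero section there are caustics, $\pi|_L$ has singular points, and $\Sigma_S$ is not a section of $E$. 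Consequently the fiber-preserving diffeomorphism you propose to carry $\Sigma_{S'}$ onto $\Sigma_S$ simply does not exist in general, and the rest of the normalization collapses.

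The fix is the one you yourself sketch at the end: reduce first to the case $L=0_B$. Th\'{e}ret does exactly this, using the composition formula for generating functions to trade $S,S'$ generating $L$ for two g.f.q.i.\ of the zero section, then running the path/Morse argument there. So your ``alternative'' is not an optional simplification but the actual route; the direct normalization you attempt for arbitrary $L$ does not go through.
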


A g.f.q.i. $S:E\longrightarrow\mathbb{R}$ is said to be \textit{special} if $E=B\times \mathbb{R}^N$ and $S=S_0+Q_{\infty}$, where $S_0$ is compactly supported and $Q_{\infty}$ is the same quadratic form on each fiber.

\begin{prop}[\cite{Th2}]
If $B$ is closed, then any g.f.q.i. can be modified to a special one by applying the operations in Theorem \ref{uniqueness}.
\end{prop}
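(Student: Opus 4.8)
The plan is to take an arbitrary g.f.q.i. $S:E\longrightarrow\mathbb{R}$ for a Lagrangian $L\subset T^{\ast}B$ Hamiltonian isotopic to the $0$-section and, through a sequence of the three operations of Theorem~\ref{uniqueness} (each of which manifestly preserves the property of being a g.f.q.i. and does not affect the generated Lagrangian), bring it into the special form $B\times\mathbb{R}^N$ with $S=S_0+Q_\infty$ where $S_0$ is compactly supported and $Q_\infty$ is a fibrewise-constant non-degenerate quadratic form. I would carry this out in three stages: first make the bundle trivial, then make the quadratic form at infinity constant along $B$, and finally arrange that the difference $S-Q_\infty$ has compact support.

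First I would trivialize the bundle. By definition of quadratic at infinity, $p:E\longrightarrow B$ is already a vector bundle; since $B$ is closed (hence compact), $E$ embeds as a subbundle of a trivial bundle $B\times\mathbb{R}^M$, and choosing a complementary subbundle $F$ with $E\oplus F\cong B\times\mathbb{R}^M$, a stabilization $S'=S+Q_F$ by a non-degenerate fibrewise quadratic form on $F$ produces a g.f.q.i. on the trivial bundle $B\times\mathbb{R}^M$ generating the same $L$. (One must check $F$ itself carries a non-degenerate fibrewise quadratic form; over a compact base one can take a fibre metric, which is fibrewise positive definite, so this is automatic.) So from now on $E=B\times\mathbb{R}^N$.

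Next I would normalize the quadratic form at infinity. After trivializing, $S$ still has the form ``$\partial_vQ_\infty$ up to a bounded fibre-derivative error'' for some non-degenerate quadratic form $Q_\infty$ on $E=B\times\mathbb{R}^N$, which a priori varies with the base point $b\in B$, i.e. $Q_\infty=Q_b(v)$. Its index is locally constant on the connected components of $B$, and (dealing with one component at a time, or noting $B$ connected in the cases of interest) constant; by Gram--Schmidt applied smoothly and using compactness of $B$, there is a fibre-preserving bundle automorphism $\Phi$ of $B\times\mathbb{R}^N$ (a smooth family $\Phi_b\in GL(N,\mathbb{R})$) carrying $Q_b$ to a fixed standard form $Q_\infty^{\mathrm{std}}$ of the same index. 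Replacing $S$ by $S\circ\Phi$ (a fiber-preserving diffeomorphism, hence allowed by Theorem~\ref{uniqueness}) we may assume $Q_\infty$ is the same quadratic form $Q_\infty^{\mathrm{std}}$ on every fibre, with $dS-\partial_v Q_\infty^{\mathrm{std}}$ still bounded.

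The final and most delicate step is to make $S_0:=S-Q_\infty^{\mathrm{std}}$ compactly supported: by hypothesis only $dS_0$ (equivalently its fibre derivative) is bounded, not supported in a compact set. Here I would use a cutoff-and-interpolation argument exactly in the spirit of Sikorav's and Th\'eret's constructions. Because $L$ is Hamiltonian isotopic to the $0$-section, outside a compact subset of $T^{\ast}B$ the Lagrangian $L$ coincides with the $0$-section, so outside a compact $K\subset B\times\mathbb{R}^N$ (using boundedness of $dS_0$ to control fibre critical points) the fibre critical set and Lagrange multipliers are already those of $Q_\infty^{\mathrm{std}}$; one then interpolates, via a suitable cutoff function $\beta$ with $\beta\equiv 1$ near $K$ and $\beta\equiv 0$ far out, between $S$ and $Q_\infty^{\mathrm{std}}$ — but naively setting $\tilde S=Q_\infty^{\mathrm{std}}+\beta S_0$ can destroy non-degeneracy and change $L_S$, so the interpolation must be done at the level of the generating data (broken geodesics / generating functions composition formula) rather than on the functions directly, or else one checks that for a slowly-varying $\beta$ supported far enough out the perturbation $\beta S_0$ stays within the regime where the generated Lagrangian is unchanged and $Q_\infty^{\mathrm{std}}$ remains a quadratic form at infinity. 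This is the main obstacle; I expect the cleanest route is to cite Th\'eret's explicit procedure and verify that each intermediate function remains a g.f.q.i. for $L$, so that the passage from $S$ to the special form is realized through the three allowed operations (plus the cutoff modification, which one shows is itself a composition of such operations in the relevant region). Once $S_0$ is compactly supported the function is special and the proposition is proved.
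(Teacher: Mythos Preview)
The paper does not actually prove this proposition: it is stated with attribution to \cite{Th2} and no proof is given, so there is nothing in the paper to compare your argument against. That said, your outline follows the standard three-stage strategy (trivialize the bundle, normalize the quadratic form, cut off at infinity), which is indeed Th\'eret's approach. Step~1 is fine. But the other two steps have genuine gaps.

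In Step~2, the claim that ``by Gram--Schmidt applied smoothly'' one finds $\Phi_b\in GL(N,\mathbb{R})$ with $\Phi_b^{\ast}Q_b=Q_\infty^{\mathrm{std}}$ is not justified: such a family exists iff the positive and negative eigenbundles $E^{+},E^{-}\subset B\times\mathbb{R}^N$ of $Q_b$ are both \emph{trivial}, and constancy of the index does not guarantee this. (Already over $B=S^1$ one can write down a loop of signature-$(1,1)$ forms on $\mathbb{R}^2$ whose positive eigenline bundle is the M\"obius bundle.) The fix is an extra stabilization: choose $F^{\pm}$ with $E^{\pm}\oplus F^{\pm}$ trivial and add a positive (resp.\ negative) definite form on $F^{+}$ (resp.\ $F^{-}$); then both eigenbundles and the total bundle become trivial simultaneously, and only then does your linear-algebra argument go through. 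You should merge this into Step~1 rather than treating trivialization of $E$ and of $E^{\pm}$ separately.

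Step~3 is, as you yourself concede, not a proof. The sentence ``outside a compact subset of $T^{\ast}B$ the Lagrangian $L$ coincides with the $0$-section'' is vacuous here ($B$ closed $\Rightarrow L$ compact $\Rightarrow$ both sides are empty far out) and does not feed into the argument. The naive cutoff $\tilde S=Q_\infty^{\mathrm{std}}+\beta S_0$ does change the generated Lagrangian in general, and ``slowly varying $\beta$'' does not repair this. What Th\'eret actually does is construct a \emph{fibre-preserving diffeomorphism} (not a multiplicative cutoff) that equals the identity on a large ball containing $\Sigma_S$ and carries $S$ to $Q_\infty^{\mathrm{std}}$ outside a compact set; the existence of such a diffeomorphism uses that $\partial_v S$ and $\partial_v Q_\infty^{\mathrm{std}}$ are nonvanishing and linearly homotopic outside that ball, so one can flow one level-set foliation to the other. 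Your proposal gestures at this (``at the level of the generating data'') but does not carry it out; if you want a self-contained argument you need to build that diffeomorphism explicitly, and if not, the honest statement is simply that Step~3 is \cite{Th2}.
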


In the following we will always consider generating functions  which are quadratic at infinity, and we will assume that they are special whenever this is needed.

\subsection{Generating functions for Hamiltonian symplectomorphisms of $\mathbb{R}^{2n}$}\label{symplectomorphisms}

We will now apply the results of \ref{lagrsubmfds} to compactly supported Hamiltonian symplectomorphisms of $\mathbb{R}^{2n}$. We do this by associating to such a symplectomorphism $\phi$ of $\mathbb{R}^{2n}$ a Lagrangian submanifold of $T^{\ast}S^{2n}$, as we will now explain. We first drop the condition of $\phi$ being compactly supported, and construct a Lagrangian submanifold $\Gamma_{\phi}$ of $T^{\ast}\mathbb{R}^{2n}$. Note first that the graph of $\phi$ can be seen as a Lagrangian embedding 
$\text{gr}_{\phi}:\mathbb{R}^{2n}\longrightarrow\overline{\mathbb{R}^{2n}}\times\mathbb{R}^{2n}$, where 
$\overline{\mathbb{R}^{2n}}$ denotes the symplectic manifold $(\mathbb{R}^{2n},-\omega_0)$. We identify $\overline{\mathbb{R}^{2n}}\times\mathbb{R}^{2n}$ with $T^{\ast}\mathbb{R}^{2n}$ by the symplectomorphism\footnote{\hspace{1mm}One can use in fact any other symplectomorphism that sends the diagonal to the 0-section. Traynor and Viterbo use respectively $\tau':(x,y, X, Y)\mapsto (y,X, x-X,Y-y)$ and $\tau'':(x,y, X, Y)\mapsto (\frac{x+X}{2},\frac{y+Y}{2}, Y-y, x-X)$. We use $\tau$ because it is consistent with the formula in the contact case given by Bhupal (see \ref{contactomorphisms}).} $\tau:(x,y, X, Y)\mapsto (x,Y, Y-y,x-X)$ and define $\Gamma_{\phi}: \mathbb{R}^{2n}\longrightarrow T^{\ast}\mathbb{R}^{2n}$ by 
$\Gamma_{\phi}=\tau\circ\text{gr}_{\phi}$. Since $\tau$ sends the diagonal of $\overline{\mathbb{R}^{2n}}\times\mathbb{R}^{2n}$ to the 0-section of $T^{\ast}\mathbb{R}^{2n}$, fixed points of $\phi$ correspond to intersection points of $\Gamma_{\phi}$ with the 0-section. Note that $\Gamma_{\phi}$ can also be written as $\Gamma_{\phi}=\Psi_{\phi}\,(\text{0-section})$ where $\Psi_{\phi}$ is the symplectomorphism of $T^{\ast}\mathbb{R}^{2n}$ defined by the diagram
\begin{displaymath}
\xymatrix{
 \quad \overline{\mathbb{R}^{2n}}\times\mathbb{R}^{2n}\quad  \ar[r]^{\text{id}\times\phi} \ar[d]_{\tau} &
 \quad \overline{\mathbb{R}^{2n}}\times\mathbb{R}^{2n} \quad \ar[d]^{\tau} \\
 \quad T^{\ast}\mathbb{R}^{2n} \quad \ar[r]_{\Psi_{\phi}} &  \quad T^{\ast}\mathbb{R}^{2n}.}\quad
\end{displaymath}
This shows in particular that $\Gamma_{\phi}$ is Hamiltonian isotopic to the 0-section. Observe that the above diagram behaves well with respect to composition: for all Hamiltonian symplectomorphisms $\phi$, $\phi_1$ and $\phi_2$ we have namely that $\Psi_{\phi_1}\circ\Psi_{\phi_2}=\Psi_{\phi_1\phi_2} $ (in particular $\Gamma_{\phi_1\,\circ\,\phi_2}=\Psi_{\phi_1}\,(\Gamma_{\phi_2})$) and $\Psi_{\phi}^{\phantom{\phi}-1}=\Psi_{\phi^{-1}}$. Note moreover that $\Gamma_{\phi}$ is in fact an exact Lagrangian embedding, with $$\Gamma_{\phi}^{\phantom{\phi}\ast}\,(\lambda_{\text{can}})=d\,(x\phi_2-\phi_1\phi_2+F)$$ where $\phi_1$ and $\phi_2$ denote the first and last $n$ components of $\phi$ and $F$ is a function satisfying $\phi^{\ast}(\lambda_0)-\lambda_0=dF$ for $\lambda_0=ydx$ (see Lemma \ref{hamis}).\\
\\
Assume now that $\phi$ is compactly supported. Then $\Gamma_{\phi}$ coincides with the 0-section outside a compact set, so (by regarding $S^{2n}$ as the 1-point compactification of $\mathbb{R}^{2n}$) it can be seen as Lagrangian submanifold $T^{\ast}S^{2n}$, Hamiltonian isotopic to the 0-section. By Theorems \ref{existence} and \ref{uniqueness} it follows that $\Gamma_{\phi}$ has a g.f.q.i. $S:E\longrightarrow\mathbb{R}$, which is unique up to addition of a constant, fiber-preserving diffeomorphism and stabilization. We may and will always assume that $S$ is special. Note that this assumption in particular normalizes $S$, removing the indeterminacy by a constant.\\
\\
A crucial property of any generating function of a Hamiltonian symplectomorphism $\phi$ of $\mathbb{R}^{2n}$ is that its set of critical values coincides with the action spectrum of $\phi$.
\begin{defi}
Let $\phi$ be a Hamiltonian symplectomorphism of $\mathbb{R}^{2n}$. The \textbf{symplectic action} of a fixed point $q$ of $\phi$ is defined by
$$\mathcal{A}_{\phi}(q):=\mathcal{A}_H\,\big(\phi_t(q)\big)= \int_0^1\big(\lambda(X_t)+H_t\big)\,\big(\phi_t(q)\big)\:dt$$
where $\phi_t$ is a Hamiltonian isotopy joining $\phi$ to the identity, $X_t$ the vector field generating it and $H_t$ the corresponding Hamiltonian. The \emph{action spectrum} of $\phi$ is the set $\Lambda(\phi)$ of all values of $\mathcal{A}_{\phi}$ at fixed points of $\phi$.
\end{defi}

Let $F:\mathbb{R}^{2n}\rightarrow\mathbb{R}$ be the compactly supported function satisfying $\phi^{\ast}\lambda_0-\lambda_0=dF$. Then by Lemma \ref{hamis} we have $\mathcal{A}_{\phi}(q)=F(q)$, so in particular we see that the definition of $\mathcal{A}_{\phi}(q)$ does not depend on the choice of the Hamiltonian isotopy $\phi_t$ connecting $\phi$ to the identity. 

\begin{lemma}\label{crucialsympl}
Let $\phi$ be a compactly supported Hamiltonian symplectomorphism of $\mathbb{R}^{2n}$, with g.f.q.i. $S$. Then a point $q$ of $\mathbb{R}^{2n}$ is a fixed point of $\phi$ if and only if $(q,0)\in\Gamma_{\phi}$, and thus if and only if $i_S^{\;-1}(q,0)$ is a critical point of $S$. In this case the corresponding critical value is the symplectic action $\mathcal{A}_{\phi}(q)$. 
\end{lemma}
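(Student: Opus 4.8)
The plan is to verify the three equivalences in sequence and then identify the critical value. First I would recall the construction of $\Gamma_\phi = \tau \circ \text{gr}_\phi$ and the key property that $\tau$ carries the diagonal of $\overline{\mathbb{R}^{2n}}\times\mathbb{R}^{2n}$ to the zero-section of $T^\ast\mathbb{R}^{2n}$. Thus $(q,q) \in \text{gr}_\phi$ — i.e. $\phi(q)=q$ — holds if and only if $\tau(q,q,q,q) = (q,0)$ lies on the zero-section, which gives the first equivalence "$q$ is a fixed point $\iff (q,0)\in\Gamma_\phi$". (One should check that $(q,q,q,q)$ is indeed the graph point and that $\tau$ evaluated there is $(q,0)$ using the explicit formula $\tau(x,y,X,Y)=(x,Y,Y-y,x-X)$: with $X=x$, $Y=y$ one gets $(x,y,0,0)$, confirming it.)

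Next, since $S$ is a g.f.q.i.\ for $\Gamma_\phi = L_S = i_S(\Sigma_S)$, the discussion following Proposition~\ref{horm} tells us that critical points of $S$ correspond under $i_S$ exactly to intersection points of $L_S$ with the zero-section. Hence $(q,0)\in\Gamma_\phi$ if and only if $i_S^{-1}(q,0)$ is a well-defined critical point of $S$; combining with the previous paragraph yields the chain of equivalences in the statement. Here I would note that $i_S$ is injective on the relevant locus because $\Gamma_\phi$ is an embedding, so $i_S^{-1}(q,0)$ makes sense.

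The substantive part is matching the critical value $S\big(i_S^{-1}(q,0)\big)$ with the symplectic action $\mathcal{A}_\phi(q)$. The idea is to use the exactness identity recorded just before the Definition of symplectic action: $\Gamma_\phi^\ast(\lambda_{\text{can}}) = d\big(x\phi_2 - \phi_1\phi_2 + F\big)$, together with the generating-function identity $i_S^\ast\lambda_{\text{can}} = d\big(S|_{\Sigma_S}\big)$ from Proposition~\ref{horm}. Since $i_S = \Gamma_\phi$ as parametrized exact Lagrangians (after identifying $\Sigma_S \cong \mathbb{R}^{2n}$ via $\pi\circ i_S$), the two primitives $S|_{\Sigma_S}$ and $x\phi_2 - \phi_1\phi_2 + F$ of the same closed form on the connected manifold $\mathbb{R}^{2n}$ differ by a constant; and because $S$ is normalized (special, agreeing with a fixed quadratic form outside a compact set, so $S|_{\Sigma_S}$ vanishes where $\phi=\text{id}$), that constant is $0$. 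Evaluating at a fixed point $q$, where $\phi(q)=q$ forces $\phi_1(q)=x$-component $=q_x$ and $\phi_2(q)=q_y$, the term $x\phi_2-\phi_1\phi_2$ becomes $q_x q_y - q_x q_y = 0$, leaving $S\big(i_S^{-1}(q,0)\big) = F(q)$. Finally, by Lemma~\ref{hamis} (as noted in the text right after the Definition), $F(q) = \mathcal{A}_\phi(q)$, which completes the proof.

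The main obstacle I anticipate is the bookkeeping in the last paragraph: carefully justifying that the two primitives differ by the constant zero (rather than merely some constant) requires using the "special" normalization of $S$ and the compact support of $\phi$ in a precise way, and the cancellation $x\phi_2 - \phi_1\phi_2 = 0$ at fixed points must be checked component-wise with the correct identification of $\phi_1,\phi_2$ as the first and last $n$ coordinates of $\phi$. Everything else is a direct unwinding of definitions from \ref{lagrsubmfds} and \ref{symplectomorphisms}.
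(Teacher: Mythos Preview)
Your proof is correct and the first two equivalences match the paper exactly (``immediate''). For the critical value, however, you take a different route from the paper. The paper picks a base point $p$ outside the support of $\phi$, integrates $\lambda_{\text{can}}$ along the path $\tau(\gamma\times\phi(\gamma))$ in $\Gamma_\phi$ from $(p,0)$ to $(q,0)$, and identifies this integral both with $S\big(i_S^{-1}(q,0)\big)-S\big(i_S^{-1}(p,0)\big)$ (via $i_S^\ast\lambda_{\text{can}}=d(S|_{\Sigma_S})$) and with $-\int_{\gamma\sqcup\phi(\gamma)^{-1}}\lambda_0=\mathcal{A}_\phi(q)$ (via $\tau^\ast\lambda_{\text{can}}=(-\lambda_0)\times\lambda_0+d(-XY+xY)$ and \cite[9.30]{MS}). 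You instead use the identity $\Gamma_\phi^\ast\lambda_{\text{can}}=d(x\phi_2-\phi_1\phi_2+F)$ already recorded in \ref{symplectomorphisms} to compare primitives directly, which is neater and avoids the path-integration bookkeeping. Both arguments rest on the same exactness relation and the same normalization at a point outside the support; yours just exploits a formula the paper proved but did not use in this proof.

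One small correction: the identification $\Sigma_S\cong\mathbb{R}^{2n}$ should be via $\Gamma_\phi^{-1}\circ i_S$ (the inverse of the embedding $\Gamma_\phi:\mathbb{R}^{2n}\to L$ composed with $i_S$), not via $\pi\circ i_S$. The projection $\pi\circ\Gamma_\phi:(x,y)\mapsto(x,\phi_2(x,y))$ need not be a diffeomorphism when $\Gamma_\phi$ is not a section, so $\pi\circ i_S$ is the wrong map. With the correct identification your primitive comparison goes through unchanged, since both $S|_{\Sigma_S}$ and $x\phi_2-\phi_1\phi_2+F$ pull back $\lambda_{\text{can}}$ through the same embedding into $T^\ast\mathbb{R}^{2n}$.
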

\begin{proof}
The first statement is immediate. Suppose now that we have a fixed point $q$ of $\phi$, and take a point $p$ in $\mathbb{R}^{2n}$ outside the support of $\phi$. We claim that
$$S\,\big(i_S^{\phantom{S}-1}(q,0)\big)=-\int_{\gamma\sqcup\phi(\gamma)^{-1}}\lambda_0=\mathcal{A}_{\phi}(q)$$
where $\gamma$ is any path in $\mathbb{R}^{2n}$ joining $p$ to $q$. The second equality is proved in \cite[9.30]{MS}. As for the first, it can be seen as follows. Note that
$$-\int_{\gamma\sqcup\phi(\gamma)^{-1}}\lambda_0=\int_{\gamma\times\phi(\gamma)}(-\lambda_0)\times\lambda_0$$
where $(-\lambda_0)\times\lambda_0$ is the Liouville form of $\overline{\mathbb{R}^{2n}}\times\mathbb{R}^{2n}$ and $\gamma\times\phi(\gamma)$ a path in the Lagrangian submanifold $\text{gr}_{\phi}$ of $\overline{\mathbb{R}^{2n}}\times\mathbb{R}^{2n}$. After identifying $\overline{\mathbb{R}^{2n}}\times\mathbb{R}^{2n}$ with $T^{\ast}\mathbb{R}^{2n}$ the result will follow from the following more general fact. Suppose that a Lagrangian submanifold $L$ of $T^{\ast}B$ is generated by $S:E\rightarrow \mathbb{R}$, i.e. $L$ is the image of $i_S: \Sigma_S\rightarrow T^{\ast}B$. Since $ i_S^{\phantom{S}\ast} \lambda_{\text{can}}=d\,(S_{|\Sigma_S})$ we have that  $\int_{\gamma}\lambda_{\text{can}}=S\,\big(i_S^{\phantom{S}-1}(y)\big)-S\,\big(i_S^{\phantom{S}-1}(x)\big)$ for any path $\gamma$ in $L$ joining two points $x$ and $y$. In our situation this gives
$$-\int_{\gamma\sqcup\phi(\gamma)^{-1}}\lambda_0=
\int_{\gamma\times\phi(\gamma)}(-\lambda_0)\times\lambda_0=
\int_{\tau\big(\gamma\times\phi(\gamma)\big)}\lambda_{\text{can}}$$
$$=S\,\big(i_S^{\phantom{S}-1}(q,0)\big)-S\,\big(i_S^{\phantom{S}-1}(p,0)\big)=S\,\big(i_S^{\phantom{S}-1}(q,0)\big).$$
The last equality holds because $S\,\big(i_S^{\phantom{S}-1}(p,0)\big)=0$, since $p$ is outside the support of $\phi$. The second follows from $\tau^{\ast}\lambda_{\text{can}}=(-\lambda_0)\times\lambda_0+d(-XY+xY)$ and the fact that the function $-XY+xY$ vanishes at the endpoints $(p,p)$ and $(q,q)$ of the path $\gamma\times\phi(\gamma)$.
\end{proof}

In \ref{inv_symplectomorphisms} and \ref{sympl_hom} respectively we are going to associate to any compactly supported Hamiltonian symplectomorphism $\phi$ of $\mathbb{R}^{2n}$ a real number $c(\phi)$ and, for real parameters $a$ and $b$, homology groups $G_k^{\;\;(a,b]}\,(\phi)$. The number $c(\phi)$ is obtained by selecting a critical value of the generating function $S$ of $\phi$, while the groups $G_{\ast}^{\;\;(a,b]}\,(\phi)$ are defined to be the relative homology of sublevel sets of $S$ at $a$ and $b$. Both $c(\phi)$ and $G_{\ast}^{\;\;(a,b]}\,(\phi)$ are invariant by conjugation of $\phi$ with another Hamiltonian symplectomorphism of $\mathbb{R}^{2n}$. As we will see, this is an immediate consequence of Lemma \ref{crucialsympl} and the fact that the action spectrum of a Hamiltonian symplectomorphism is invariant by conjugation. In \ref{sympl_hom} we will then apply a limit process in order to associate to any domain $\mathcal{U}$ of $\mathbb{R}^{2n}$ symplectic homology groups
$G_{\ast}^{\;\;(a,b]}\,(\mathcal{U})$, by looking at the corresponding groups for Hamiltonian symplectomorphisms supported in $\mathcal{U}$. The limit process will be with respect to the following partial order $\leq$ on the group
$\text{Ham}^c\,(\mathbb{R}^{2n})$ of compactly supported Hamiltonian symplectomorphisms of $\mathbb{R}^{2n}$: we say that $\phi_1\leq\phi_2$ if $\phi_2\phi_1^{-1}$ is the time-1 flow of some non-negative Hamiltonian (Hamiltonian functions of compactly supported symplectomorphism are normalized to be 0 outside the support). The fact that $\leq$ is indeed a partial order, in particular that if $\phi_1\leq\phi_2$ and $\phi_2\leq\phi_1$ then $\phi_1=\phi_2$, will be proved in \ref{Vitorder} by comparing $\leq$ with the partial order on $\text{Ham}^c\,(\mathbb{R}^{2n})$ defined in \cite{V}. We will need the following proposition.

\begin{prop}\label{parordsymp}
If $\phi_1\leq\phi_2$, then there are generating functions $S_1$, $S_2: E \longrightarrow \mathbb{R}$ for $\Gamma_{\phi_1}$, $\Gamma_{\phi_2}$ respectively such that $S_1\leq S_2$.
\end{prop}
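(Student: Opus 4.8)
The plan is to realise $\Gamma_{\phi_2}$ as the image of $\Gamma_{\phi_1}$ under a Hamiltonian isotopy of $T^{\ast}S^{2n}$ generated by a \emph{non-negative} Hamiltonian, and then to choose, along that isotopy, the generating functions produced by Theorem \ref{existence} so that they only increase. The underlying mechanism is that enlarging a Hamiltonian pointwise enlarges the associated action functional pointwise, hence — after Sikorav's finite-dimensional reduction — the associated generating function.

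\emph{Step 1: reduction to a non-negatively generated isotopy.} Since $\phi_1\leq\phi_2$, write $\phi_2=\varphi_1\circ\phi_1$, where $\{\varphi_t\}_{t\in[0,1]}$ is the flow of a compactly supported Hamiltonian $h_t\geq 0$ on $\mathbb{R}^{2n}$. Then $t\mapsto\varphi_t\circ\phi_1$ is a Hamiltonian isotopy from $\phi_1$ to $\phi_2$, and by the composition property of $\phi\mapsto\Psi_{\phi}$ recalled in \ref{symplectomorphisms} we get the Lagrangian isotopy $\Gamma_{\varphi_t\phi_1}=\Psi_{\varphi_t}(\Gamma_{\phi_1})$ from $\Gamma_{\phi_1}$ to $\Gamma_{\phi_2}$. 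The product isotopy $\text{id}\times\varphi_t$ of $\overline{\mathbb{R}^{2n}}\times\mathbb{R}^{2n}$ is generated by $(w,z)\mapsto h_t(z)\geq 0$, so $\Psi_{\varphi_t}=\tau\circ(\text{id}\times\varphi_t)\circ\tau^{-1}$ is the flow of the Hamiltonian obtained by composing this function with $\tau^{-1}$; a direct computation with the formula for $\tau$ shows that this Hamiltonian is $(q_1,q_2,p_1,p_2)\mapsto h_t(q_1-p_2,q_2)\geq 0$. The Lagrangians $\Gamma_{\varphi_t\phi_1}$ all agree with the $0$-section outside a fixed compact set, so multiplying this Hamiltonian by a non-negative cutoff equal to $1$ near $\bigcup_t\Gamma_{\varphi_t\phi_1}$ yields a compactly supported non-negative Hamiltonian on $T^{\ast}\mathbb{R}^{2n}$, hence on $T^{\ast}S^{2n}$, whose flow $\Phi^t$ still carries $\Gamma_{\phi_1}$ onto $\Gamma_{\phi_2}$ through the $\Gamma_{\varphi_t\phi_1}$. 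Thus it suffices to prove: if $L\subset T^{\ast}S^{2n}$ has a g.f.q.i.\ and $\{\Phi^t\}$ is a Hamiltonian isotopy generated by a non-negative Hamiltonian, then $L$ and $\Phi^1(L)$ admit g.f.q.i.\ $S\leq S'$ on one and the same total space.

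\emph{Step 2: monotonicity.} I would run Sikorav's broken-trajectory construction (the one proving Theorem \ref{existence}, see \cite{S},\cite{S2}) simultaneously for two Hamiltonians. Since $\Gamma_{\phi_1}$ is Hamiltonian isotopic to the $0$-section, say via the flow of some $G_t$ (compactly supported on $T^{\ast}S^{2n}$ after a cutoff), we have $L=\Psi_{G\#0}(\text{0-section})$ and $\Phi^1(L)=\Psi_{G\#\widetilde h}(\text{0-section})$, where $G\#0$ and $G\#\widetilde h$ denote the concatenations over a common time interval with the non-negative Hamiltonian $\widetilde h$ from Step 1. Since $\widetilde h_t\geq 0$ we have $G\#\widetilde h\geq G\#0$ pointwise in space and time, hence $\mathcal{A}_{G\#\widetilde h}(\gamma)\geq\mathcal{A}_{G\#0}(\gamma)$ for \emph{every} path $\gamma$ issuing from the $0$-section. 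Restricting both action functionals to the same finite-dimensional space of broken Hamiltonian trajectories associated to a single subdivision turns them into g.f.q.i.\ $S$ for $L$ and $S'$ for $\Phi^1(L)$ on a common total space $E$, and the pointwise inequality descends to $S\leq S'$ on $E$ up to the operations of Theorem \ref{uniqueness}; in particular a constant may have to be added (equivalently, the required constant can be absorbed beforehand by replacing $h_t$ with $h_t+c_t$ for a constant $c_t\geq 0$, which changes neither $\varphi_1$ nor the non-negativity). This is essentially the argument used by Viterbo in \cite{V} and by Th\'eret in \cite{Th2} for the monotonicity of $c(\phi)$.

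\emph{Expected main obstacle.} The conceptual content is the one-line observation that a pointwise-larger Hamiltonian has a pointwise-larger action functional; the real difficulty is the bookkeeping in Step 2 — arranging the two g.f.q.i.\ on \emph{literally} the same total space $E$ and checking the inequality at \emph{all} points of $E$, not just at fibre-critical ones. This forces a careful choice of the subdivision and an explicit matching of the quadratic-at-infinity parts (the ``glueing'' quadratic forms of the broken-trajectory construction), and it is here that one must invoke the uniqueness theorem \ref{uniqueness} to reconcile the two constructions.
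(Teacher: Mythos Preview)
Your Step~1 is correct and coincides with the paper's setup: if $\phi_2\phi_1^{-1}$ is the time-$1$ map of a non-negative Hamiltonian $h_t$ on $\mathbb{R}^{2n}$, then $\Psi_{\varphi_t}$ is a Hamiltonian isotopy of $T^{\ast}\mathbb{R}^{2n}$ (or $J^1\mathbb{R}^{2n+1}$ in the contact version) with non-negative Hamiltonian, carrying $\Gamma_{\phi_1}$ to $\Gamma_{\phi_2}$. The paper makes exactly this observation.

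Step~2, however, contains a genuine gap. Sikorav's broken-trajectory construction does \emph{not} ``restrict the action functional to a finite-dimensional subspace of the path space'' common to both Hamiltonians: the extra fibre variables parametrize breakpoints, and on each subinterval one follows an \emph{actual} trajectory of the given Hamiltonian flow. The two finite-dimensional generating functions associated to $G\#0$ and $G\#\widetilde h$ are therefore evaluated on \emph{different} broken paths, and the pointwise inequality $\mathcal{A}_{G\#\widetilde h}\geq\mathcal{A}_{G\#0}$ on the infinite-dimensional path space does not descend automatically. Your remark about ``up to the operations of Theorem~\ref{uniqueness}; in particular a constant may have to be added'' is also problematic: allowing an additive constant makes the conclusion $S_1\leq S_2$ vacuous, and replacing $h_t$ by $h_t+c_t$ destroys the compact support needed for the compactification to $T^{\ast}S^{2n}$ (and would shift the normalization at the point at infinity).

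The paper closes this gap by deducing the symplectic statement from its contact analogue (Proposition~\ref{parordcont}), whose proof uses Chaperon's \emph{Greek generating functions}. The key technical input is Lemma~\ref{lui2}: for a $\mathcal{C}^1$-small isotopy with Greek generating function $\Phi_t$, one has $\tfrac{d}{dt}\Phi_t(Q,p,z)=H_t$ at the image point, hence $\tfrac{d}{dt}\Phi_t\geq 0$ when $H_t\geq 0$. Combined with Chaperon's explicit composition formula $S_t(Q;p,q,\xi)=\Phi_t\big(Q,p,S(q;\xi)-pq\big)$ (Lemma~\ref{lui}), this gives $\tfrac{d}{dt}S_t\geq 0$ pointwise on a \emph{single} total space $E$, with no appeal to infinite-dimensional actions or to the uniqueness theorem. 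For non-small isotopies one iterates using Lemma~\ref{lui3}. This is precisely the rigorous, finite-dimensional incarnation of your heuristic ``larger Hamiltonian $\Rightarrow$ larger generating function'', and it is exactly what your ``expected main obstacle'' anticipates---but the mechanism is a Hamilton--Jacobi-type identity for Greek generating functions, not broken-trajectory bookkeeping.
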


\noindent
This proposition is proved in \cite[5.3]{T}. It will also follow as a special case of the corresponding result in contact geometry, that we will prove in \ref{contactomorphisms}.

\subsection{Invariants for Lagrangian submanifolds}\label{Vit_inv}

In the next four sections we will follow \cite{V} very closely. We will first define invariants for Lagrangian submanifolds of $T^{\ast}B$ and discuss their properties. Then we will apply these invariants to compactly supported Hamiltonian symplectomorphisms of $\mathbb{R}^{2n}$, and use them to define a partial order $\leq_V$ on $\text{Ham}^c\,(\mathbb{R}^{2n})$ and a capacity for domains in $\mathbb{R}^{2n}$.\\
\\
Let $B$ be a closed manifold and fix a point $P$ on it. Denote by $0_B$ the 0-section of $T^{\ast}B$ and by $\mathcal{L}_P$ the set of all Lagrangian submanifolds of $T^{\ast}B$ which are Hamiltonian isotopic to $0_B$ and such that $P\in L \cap 0_B$. We normalize generating functions by requiring that the critical point corresponding to $P$ has critical value $0$. In this way the set of critical values of a generating function for a Lagrangian submanifold $L$ depends only on $L$, and not on the choice of the generating function. Given $L$ in $\mathcal{L}_P$, we will now explain how to use a cohomology class $u$ of $B$ to select a critical value of the generating function of $L$, in order to define an invariant
$c(u,L)$.\\
\\
Let $L$ be an element of $\mathcal{L}_P$ with g.f.q.i. $S=S_0+Q_{\infty}:E\longrightarrow \mathbb{R}$. We denote by $E^{a}$, for $a\in \mathbb{R}\cup\infty$, the sublevel set of $S$ at $a$, i.e. $E^a=\{\,x\in E\,|\,S(x)\leq a\,\}$, and by $E^{-\infty}$ the set $E^{-a}$ for $a$ big (note that up to homotopy equivalence $E^{-\infty}$ is the same for all $L$ in $\mathcal{L}_P$). We will study the inclusion $i_a: (E^a,E^{-\infty})\hookrightarrow (E,E^{-\infty})$, and the induced map on cohomology
$$ i_a^{\phantom{a}\ast}: H^{\ast}(B)\equiv H^{\ast}(E,E^{-\infty})\longrightarrow H^{\ast}(E^a,E^{-\infty}).$$
Here $H^{\ast}(B)$ is identified with $H^{\ast}(E,E^{-\infty})$ via the Thom isomorphism
$$ T: H^{\ast}(B)\xrightarrow{\cong}H^{\ast}\big(D(E^-), S(E^-)\big)$$
where $E^-$ denotes the subbundle of $E$ where $Q_{\infty}$ is negative definite. Note that this isomorphism shifts the grading by the index of $Q_{\infty}$. Note also that by excision $H^{\ast}\big(D(E^-), S(E^-)\big)$ is isomorphic to $H^{\ast}(E,E^{-\infty})$. For $|a|$ big enough we have $H^{\ast}(E^a,E^{-\infty})\equiv 0$ if $a<0$, and $i_a^{\phantom{a}\ast}=\text{id}$ if $a> 0$. So we can define
$$ c(u,L):=\text{inf}\,\{\,a\in \mathbb{R} \;|\;i_a^{\phantom{a}\ast}(u)\neq 0\,\}$$
for any $u\neq 0$ in $H^{\ast}(B)$. It follows from Theorem \ref{uniqueness} that $c(u,L)$ is well-defined, i.e. it does not depend on the choice of the generating function used to calculate it. Note also that $c(u,L)$ is a critical value of $S$. The other relevant properties of $c(u,L)$ are contained the following lemma.

\begin{lemma}\label{inv_lagr}
Let $\mu\in H^n(B)$ denote the orientation class of $B$. The map $H^{\ast}(B)\times \mathcal{L}_P\longrightarrow\mathbb{R}$, $(u,L)\longmapsto c(u,L)$ satisfies the following properties:
\vspace{-0.2cm}
\begin{enumerate}
\renewcommand{\labelenumi}{(\roman{enumi})}
\item If $L_1$, $L_2$ have generating functions $S_1$, $S_2: E \longrightarrow\mathbb{R}$ with $|S_1-S_2|_{\mathcal{C}^0}\leq \varepsilon$, then for any $u$ in $H^{\ast}(B)$ it holds that $|c(u,L_1)-c(u,L_2)|\leq \varepsilon$.
\item $$c\big(u\cup v, L_1+L_2\big)\geq c(u,L_1)+c(v,L_2) $$
where $L_1+L_2$ is defined by
$$ L_1+L_2:=\{\;(q,p)\in T^{\ast}B \;|\; p=p_1+p_2, \; (q,p_1)\in L_1, \; (q,p_2)\in L_2  \;\}.$$
\item $$c(\mu,\bar{L})=-c(1,L),$$ where $\bar{L}$ denotes the image of $L$ under the map $T^{\ast}B\rightarrow T^{\ast}B$, $(q,p)\mapsto(q,-p)$.
\item $c(\mu, L)=c(1,L)$ if and only if $L$ is the $0$-section. In this case we have $$c(\mu,L)=c(1,L)=0.$$
\item For any Hamiltonian symplectomorphism $\Psi$ of $T^{\ast}B$ such that $\Psi(P)=P$, it holds 
$$c\big(u,\Psi(L)\big)=c\big(u,L-\Psi^{-1}(0_B)\big).$$
\end{enumerate}
\end{lemma}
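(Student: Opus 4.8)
The plan is to verify the five properties in turn, throughout representing each $L\in\mathcal L_P$ by a special g.f.q.i.\ $S=S_0+Q_\infty:E\to\mathbb R$ and using Theorems~\ref{existence} and \ref{uniqueness} to pass freely between generating functions of the same Lagrangian; in particular, given two Lagrangians I would first stabilize so that they are generated by functions on one common bundle $E$. Property (i) is then immediate: $|S_1-S_2|_{\mathcal{C}^0}\le\varepsilon$ forces the inclusions $E_1^{a-\varepsilon}\subseteq E_2^{a}\subseteq E_1^{a+\varepsilon}$ (and, up to homotopy equivalence, the same for the sets $E^{-\infty}$), hence a commuting ladder of the maps $i_a^{\ast}$; so if $i_a^{\ast}(u)\neq 0$ for $S_2$ then $i_{a+\varepsilon}^{\ast}(u)\neq 0$ for $S_1$, and passing to infima gives $c(u,L_1)\le c(u,L_2)+\varepsilon$, with the reverse inequality by symmetry.

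For (ii) the key observation is that $S_1\oplus S_2:E_1\times_B E_2\to\mathbb R$, $(e_1,e_2)\mapsto S_1(e_1)+S_2(e_2)$, is a g.f.q.i.\ for $L_1+L_2$. Since $E_1^{a}\times_B E_2^{b}\subseteq(E_1\times_B E_2)^{a+b}$ and $u\cup v=\Delta^{\ast}(u\times v)$, a standard cup/cross-product argument over the sublevel-set filtration of $S_1\oplus S_2$ shows that $i_{a+b}^{\ast}(u\cup v)=0$ as soon as $a<c(u,L_1)$ or $b<c(v,L_2)$; letting $a\uparrow c(u,L_1)$ and $b\uparrow c(v,L_2)$ yields $c(u\cup v,L_1+L_2)\ge c(u,L_1)+c(v,L_2)$. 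For (iii), note that $-S$ is a g.f.q.i.\ for $\bar L$, so the sublevel sets of $-S$ are the superlevel sets of $S$; Poincar\'e--Lefschetz duality applied to the pairs $(E^{a},E^{-\infty})$ interchanges sub- and superlevel sets and dualizes $1$ to the orientation class $\mu$, which is exactly the content of $c(\mu,\bar L)=-c(1,L)$.

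Property (iv) has an easy and a hard half. If $L=0_B$ one may take $S=Q_\infty$, whose only critical value is $0$ after normalization, so $c(u,0_B)=0$ for every $u$. For the converse I would first record that $\{a:i_a^{\ast}\mu\neq 0\}\subseteq\{a:i_a^{\ast}1\neq 0\}$, since under the Thom isomorphism $\mu$ corresponds to $\mu\cdot 1$ in the rank-one $H^{\ast}(E)$-module $H^{\ast}(E,E^{-\infty})$ while each $i_a^{\ast}$ is $H^{\ast}(E)$-linear; hence $c(1,L)\le c(\mu,L)$ always. If equality holds, all of the topology of the pairs $(E^{a},E^{-\infty})$ changes at the single value $c:=c(1,L)=c(\mu,L)$, and a Lusternik--Schnirelmann/Morse-theoretic count of critical values forces $S$ to be equivalent, under the moves of Theorem~\ref{uniqueness}, to $Q_\infty+c$; thus $L=0_B$, and the normalization gives $c=0$.

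Finally, (v) is the naturality of $c$ under a Hamiltonian symplectomorphism $\Psi$ of $T^{\ast}B$ with $\Psi(P)=P$. Given such a $\Psi$ and a g.f.q.i.\ $S$ for $L$, I would produce a g.f.q.i.\ for $\Psi(L)$ by composing $S$ with a generating function for the graph of $\Psi$, in the manner of \ref{symplectomorphisms}, and separately note that $L-\Psi^{-1}(0_B)=L+\overline{\Psi^{-1}(0_B)}$ is generated by the associated fiber-sum function; a direct comparison then identifies these two generating functions up to fiber-preserving diffeomorphism and stabilization, so Theorem~\ref{uniqueness} gives $c(u,\Psi(L))=c(u,L-\Psi^{-1}(0_B))$. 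I expect the main obstacles to be the two rigidity statements: the implication ``$c(1,L)=c(\mu,L)\Rightarrow L=0_B$'' in (iv), which needs a genuine critical-point argument rather than a formal manipulation, and the check in (v) that the two a priori unrelated constructions of generating functions for $\Psi(L)$ and for $L+\overline{\Psi^{-1}(0_B)}$ really agree up to the equivalences of Theorem~\ref{uniqueness}.
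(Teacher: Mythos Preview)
Your treatments of (i)--(iv) are fine and broadly in line with the paper: (i) is exactly the sublevel-set inclusion argument the paper gives, and for (ii)--(iv) the paper simply refers to \cite{V}, so your sketches are acceptable outlines of the Viterbo arguments.

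The genuine gap is in (v). You propose to build a g.f.q.i.\ for $\Psi(L)$ and a g.f.q.i.\ for $L-\Psi^{-1}(0_B)$ and then show they ``agree up to fiber-preserving diffeomorphism and stabilization'', invoking Theorem~\ref{uniqueness}. But Theorem~\ref{uniqueness} is a statement about two generating functions of the \emph{same} Lagrangian; the operations listed there preserve the Lagrangian being generated. Since $\Psi(L)$ and $L-\Psi^{-1}(0_B)$ are in general \emph{different} Lagrangian submanifolds of $T^{\ast}B$, their generating functions cannot be related by those moves, and no amount of direct comparison can make the uniqueness theorem apply. What is true is that they have canonically bijective intersections with $0_B$ and the same ``action differences'' between pairs of such intersections; extracting the equality of spectral numbers from this requires a different mechanism.

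The paper's argument supplies exactly that mechanism. One considers the one-parameter family $\Lambda_t:=\Psi_t^{-1}\Psi(L)-\Psi_t^{-1}(0_B)$, which interpolates between $\Psi(L)$ at $t=0$ and $L-\Psi^{-1}(0_B)$ at $t=1$. By (i) together with Theorem~\ref{existence}, the map $t\mapsto c(u,\Lambda_t)$ is continuous. The key symplectic input is the quantity $l(x,y;L_1,L_2)=\int_{\gamma_1\gamma_2^{-1}}\lambda_{\text{can}}$, which is invariant under simultaneous Hamiltonian deformation of $(L_1,L_2)$; this forces $c(u,\Lambda_t)\in\Lambda(L)$ for all $t$, where $\Lambda(L)$ is a fixed, totally disconnected subset of $\mathbb{R}$. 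A continuous map into a totally disconnected set is constant, hence $c\big(u,\Psi(L)\big)=c\big(u,L-\Psi^{-1}(0_B)\big)$. This continuity-plus-discreteness argument is the missing idea in your proposal; it also explains why the contact analogue fails (the corresponding ``spectrum'' is not conjugation-invariant), which your direct-identification approach would obscure.
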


The first property is immediate. For $a\in\mathbb{R}$ and $j=1,2$ denote by $\big(E^a\big)_j$ the sublevel set of $S_j$ at $a$, and by $(i_a^{\phantom{a}\ast})_j$ the map on cohomology induced by the inclusion of the pair $\big((E^a)_j\,,\,E^{-\infty}\big)$ into $\big(E\,,\,E^{-\infty}\big)$. If $|S_1-S_2|_{\mathcal{C}^0}\leq \varepsilon$, then we have inclusions of sublevel sets $\big(E^{a-\varepsilon}\big)_2 \subset \big(E^a\big)_1 \subset \big(E^{a+\varepsilon}\big)_2$. For any $a > c(u,L_1)$ we have $(i_a^{\phantom{a}\ast})_1(u)\neq 0$ which implies $(i_{a+\varepsilon}^{\phantom{a+\varepsilon}\ast})_2(u)\neq 0$ and so $c(u,L_2)\leq a +\varepsilon$. Similarly, for any $a'<c(u,L_1)$ we have that $c(u,L_2)>a'-\varepsilon$. It follows that 
$c(u,L_1)-\varepsilon \leq c(u,L_2) \leq c(u,L_1)+\varepsilon$ as we wanted.\\
\\
Properties (ii), (iii) and (iv) require more elaborated arguments of algebraic topology, and we refer to \cite{V} for a proof \footnote{\hspace{1mm}See also \cite{M} for an alternative definition and proof of the main properties of the invariants $c(u,L)$, based on Morse homology.}. We will present here only the proof of (v), because it is the only point that needs arguments of symplectic geometry. We will see in \ref{Bhupalorder} that the analogue statement is not true in the contact case.\\
\\
We first need to introduce some preliminaries from \cite{V} and \cite{V2}. Given Lagrangian submanifolds $L_1$, $L_2$ of $T^{\ast}B$ and points $x$, $y$ in $L_1\cap L_2$, define $$l\,(x,y;L_1,L_2):=\int_{\gamma_1\gamma_2^{-1}}\lambda_{\text{can}}$$ where $\gamma_1$ and $\gamma_2$ are paths in $L_1$, $L_2$ respectively joining $x$ and $y$. Note that $l(x,y;L_1,L_2)=S_1\big(i_{S_1}^{\phantom{S_1}-1}(y)\big)-S_1\big(i_{S_1}^{\phantom{S_1}-1}(x)\big)+S_2\big(i_{S_2}^{\phantom{S_2}-1}(y)\big)-S_2\big(i_{S_2}^{\phantom{S_2}-1}(x)\big)$, where $S_1$, $S_2$ are g.f.q.i. for $L_1$, $L_2$. In particular, for any $L$ in $\mathcal{L}_P$ and $u$ in $H^{\ast}(B)$ there exist points $x$, $y$ in $L \cap 0_B$ such that $c(u,L)=l\,(x,y,;L,0_B)$: just take $x=P$ and $y$ such that $S\,\big(i_S^{\phantom{S_1}-1}(y)\big)=c(u,L)$, where $S$ is a g.f.q.i. for $L$. 
Note that if $\Psi_t$ is an Hamiltonian isotopy of $T^{\ast}B$ then
$l\,(x,y;L_1,L_2)=l\,\big(\Psi_t(x),\Psi_t(y);\Psi_t(L_1),\Psi_t(L_2)\big)$, as can be easily checked using the fact that 
$\Psi_t^{\phantom{t}\ast}\lambda_{\text{can}}-\lambda_{\text{can}}$ is exact. For $L\in\mathcal{L}_P$, define a subset $\Lambda(L)$ of $\mathbb{R}$ by
$\Lambda(L):= \{\,l(x,y,;L,0_B)\,|\, x,y \in L\cap 0_B\,\}$.

\begin{proof}[Proof of Lemma \ref{inv_lagr}(v)] 
Let $\Psi$ be the time-1 flow of a Hamiltonian isotopy $\Psi_t$, and consider the map $t \longmapsto c\big(u,\Psi_t^{\phantom{t}-1}\Psi(L)-\Psi_t^{\phantom{t}-1}(0_B)\big)$. We know by Lemma \ref{inv_lagr}(i) and Theorem \ref{existence} that this map is continuous, and we claim that it takes values in $\Lambda(L)$. Since $\Lambda(L)$ is a totally disconnected set, it will follow that $t \longmapsto c\big(u,\Psi_t^{-1}\Psi(L)-\Psi_t^{-1}(0_B)\big)$ is independent of $t$ and thus in particular $c\big(u,\Psi(L)\big)=c\big(u,L-\Psi^{-1}(0_B)\big)$. To prove the claim, let $x_t$, $y_t$ be points in the intersection of $\Psi_t^{-1}\Psi(L)-\Psi_t^{-1}(0_B)$ with $0_B$ such that
$$ c(u,\Psi_t^{-1}\Psi(L)-\Psi_t^{-1}(0_B))=l(x_t,y_t;\Psi_t^{-1}\Psi(L)-\Psi_t^{-1}(0_B),0_B),$$
and let $x'_t$, $y'_t$ be the corresponding points in $\Psi_t^{-1}\Psi(L)\cap\Psi_t^{-1}(0_B)$. Then we have 
$$c\,\big(u,\Psi_t^{-1}\Psi(L)-\Psi_t^{-1}(0_B)\big)
=l(x_t,y_t;\Psi_t^{-1}\Psi(L)-\Psi_t^{-1}(0_B),0_B)
=l(x'_t,y'_t;\Psi_t^{-1}\Psi(L),\Psi_t^{-1}(0_B))$$
$$=l(\Psi_tx'_t,\Psi_ty'_t;\Psi(L),0_B)\in \Lambda(L)$$
as we wanted.
\end{proof}

\subsection{Invariants for Hamiltonian symplectomorphisms of $\mathbb{R}^{2n}$}\label{inv_symplectomorphisms}

We will now apply the construction of \ref{Vit_inv} to the special case of a compactly supported Hamiltonian symplectomorphism $\phi$ of $\mathbb{R}^{2n}$. We define
$$c(\phi):=c(\mu,\Gamma_{\phi})$$
where $\Gamma_{\phi}$ is the Lagrangian submanifold of $T^{\ast}S^{2n}$ constructed in \ref{symplectomorphisms} and $\mu$ the orientation class of $S^{2n}$. Note that $\Gamma_{\phi}$ intersects the 0-section at the point at infinity of $S^{2n}$. This point plays the role of the point $P$ in \ref{Vit_inv}. We know that $c(\phi)$ is a critical value for any g.f.q.i. of $\Gamma_{\phi}$, and hence that $c(\phi)=\mathcal{A}_{\phi}(q)$ for some fixed point $q$ of $\phi$. Note also that $c(\text{id})=0$. Moreover we have the following properties.

\begin{prop}\label{inv_sympl}
For all $\phi$, $\psi$ in $\text{Ham}^c\,(\mathbb{R}^{2n})$ it holds:
\vspace{-0.2cm}
\begin{enumerate}
\renewcommand{\labelenumi}{(\roman{enumi})}
\item $c(\phi)\geq 0$.
\item If $c(\phi)=c(\phi^{-1})= 0$ then $\phi$ is the identity.
\item $c(\phi\psi)\leq c(\phi) + c(\psi)$.
\item $c(\phi)=c(\psi\phi\psi^{-1})$.
\item If $\phi_1 \leq \phi_2$ in the sense of \ref{symplectomorphisms}, then $c(\phi_1)\leq c(\phi_2)$.
\end{enumerate}
\end{prop}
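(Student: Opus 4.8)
The plan is to derive each of the five items from the corresponding property of the Lagrangian invariant $c(\mu,\Gamma_\phi)$ established in Lemma \ref{inv_lagr}, together with the action-spectrum description from Lemma \ref{crucialsympl} and the multiplicativity $\Psi_{\phi_1}\circ\Psi_{\phi_2}=\Psi_{\phi_1\phi_2}$ of the construction in \ref{symplectomorphisms}. For (iv), which is the main point, I would use Lemma \ref{inv_lagr}(v): since $\Gamma_{\psi\phi\psi^{-1}}=\Psi_\psi(\Gamma_\phi)$ and $\Psi_\psi$ fixes the point at infinity $P$, that lemma gives $c(\mu,\Psi_\psi(\Gamma_\phi))=c(\mu,\Gamma_\phi-\Psi_\psi^{-1}(0_{S^{2n}}))$; but $\Psi_\psi^{-1}(0_{S^{2n}})=\Psi_{\psi^{-1}}(0_{S^{2n}})=\Gamma_{\psi^{-1}}$, and one checks that $\Gamma_\phi - \Gamma_{\psi^{-1}}$ is, up to the identification $\tau$, simply $\Gamma_\phi$ again translated back — more precisely $\Gamma_\phi$ and $\Gamma_\phi-\Gamma_{\psi^{-1}}$ are Hamiltonian isotopic through Lagrangians whose set of critical values (i.e. action spectrum, by Lemma \ref{crucialsympl}) stays in $\Lambda(\Gamma_\phi)$, a totally disconnected set. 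By the continuity property Lemma \ref{inv_lagr}(i) and the existence Theorem \ref{existence}, $c(\mu,\cdot)$ along this isotopy is continuous with values in a totally disconnected set, hence constant, giving $c(\psi\phi\psi^{-1})=c(\phi)$.

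For (i), I would observe that $0_{S^{2n}}$ itself is generated by a quadratic form, whose only critical value is $0$ (with our normalization), and that the point at infinity $P$ always lies in $\Gamma_\phi\cap 0_{S^{2n}}$ with symplectic action $0$; since $c(\mu,\Gamma_\phi)$ is a critical value of a g.f.q.i. for $\Gamma_\phi$ and $c(1,\Gamma_\phi)\le c(\mu,\Gamma_\phi)$ always holds, while $c(1,\Gamma_\phi)=0$ because the class $1\in H^0$ is detected as soon as the sublevel set becomes non-empty and $P$ has critical value $0$, one gets $c(\phi)=c(\mu,\Gamma_\phi)\ge 0$. Item (ii) is exactly the ``only if'' direction of Lemma \ref{inv_lagr}(iv) combined with (iii) of that lemma: $c(\mu,\bar L)=-c(1,L)$ shows $c(\phi^{-1})$ relates to $c(1,\Gamma_\phi)$; when both $c(\phi)$ and $c(\phi^{-1})$ vanish one forces $c(\mu,\Gamma_\phi)=c(1,\Gamma_\phi)=0$, so Lemma \ref{inv_lagr}(iv) gives $\Gamma_\phi=0_{S^{2n}}$, i.e. $\phi=\mathrm{id}$. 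Item (iii) follows from the triangle-type inequality Lemma \ref{inv_lagr}(ii): using $\Gamma_{\phi\psi}=\Psi_\phi(\Gamma_\psi)$ and writing this as a sum $\Gamma_\phi + (\text{translate of }\Gamma_\psi)$ in the sense of $L_1+L_2$, together with $\mu = \mu\cup 1$, yields $c(\mu,\Gamma_{\phi\psi})\ge c(\mu,\Gamma_\phi)+c(1,\Gamma_\psi)$ on one side; applying the same to $\phi^{-1}(\phi\psi)=\psi$ and rearranging, plus $c(1,L)\le c(\mu,L)$, gives the stated upper bound $c(\phi\psi)\le c(\phi)+c(\psi)$. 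Item (v) is immediate from Proposition \ref{parordsymp}: if $\phi_1\le\phi_2$ then there are g.f.q.i.\ $S_1\le S_2$ for $\Gamma_{\phi_1},\Gamma_{\phi_2}$, hence $E^a_1\subset E^a_2$ for every $a$, the inclusion is compatible with the maps $i_a^\ast$, and therefore $c(\mu,\Gamma_{\phi_1})\le c(\mu,\Gamma_{\phi_2})$.

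The main obstacle is item (iv): one must be careful that Lemma \ref{inv_lagr}(v) is stated for a Hamiltonian symplectomorphism $\Psi$ of $T^\ast S^{2n}$ fixing $P$, and that $\Psi_\psi$ genuinely extends to such a map on $T^\ast S^{2n}$ (it does, since $\psi$ is compactly supported, so $\Psi_\psi$ is the identity near the cotangent fiber over $P$), and that the Lagrangian $\Gamma_\phi - \Psi_\psi^{-1}(0_{S^{2n}})$ has the same set of critical values as $\Gamma_\phi$. For the latter I would either invoke directly the inclusion $c(\mu,L-L')\in \Lambda(\cdots)$ reasoning used in the proof of Lemma \ref{inv_lagr}(v), or note that $\Psi_\psi^{-1}(0_{S^{2n}})=\Gamma_{\psi^{-1}}$ intersects $0_{S^{2n}}$ only at fixed points of $\psi^{-1}$ with their symplectic actions as critical values, so that the combined critical value set is governed by $\Lambda(\Gamma_\phi)$ and stays totally disconnected along the isotopy — exactly the situation handled by the continuity argument. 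All other items are formal consequences of Lemma \ref{inv_lagr} and require only the bookkeeping of the $\Psi_\phi$-formalism.
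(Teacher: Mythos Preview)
Your approach to (iv) contains a genuine error: the identity $\Gamma_{\psi\phi\psi^{-1}}=\Psi_\psi(\Gamma_\phi)$ is false. From $\Gamma_\alpha=\Psi_\alpha(0_{S^{2n}})$ and $\Psi_{\alpha\beta}=\Psi_\alpha\Psi_\beta$ one gets $\Psi_\psi(\Gamma_\phi)=\Psi_\psi\Psi_\phi(0_{S^{2n}})=\Gamma_{\psi\phi}$, whereas $\Gamma_{\psi\phi\psi^{-1}}=\Psi_\psi\Psi_\phi\Psi_{\psi^{-1}}(0_{S^{2n}})=\Psi_\psi\Psi_\phi(\Gamma_{\psi^{-1}})$; these agree only when $\psi=\mathrm{id}$. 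So Lemma~\ref{inv_lagr}(v) applied with $\Psi=\Psi_\psi$ and $L=\Gamma_\phi$ computes $c(\mu,\Gamma_{\psi\phi})$, not $c(\psi\phi\psi^{-1})$, and your subsequent ``totally disconnected spectrum'' isotopy has no clear starting point. The paper bypasses Lemma~\ref{inv_lagr}(v) here entirely: it observes that the action spectrum is conjugation-invariant, $\Lambda(\psi_t\phi\psi_t^{-1})=\Lambda(\phi)$ for all $t$, so by Lemma~\ref{crucialsympl} the critical-value set of a generating function of $\psi_t\phi\psi_t^{-1}$ is independent of $t$; then $t\mapsto c(\psi_t\phi\psi_t^{-1})$ is continuous (Lemma~\ref{inv_lagr}(i) and Theorem~\ref{existence}) with values in the fixed totally disconnected set $\Lambda(\phi)$, hence constant.

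There are smaller problems elsewhere. In (i) your justification that $c(1,\Gamma_\phi)=0$ is not valid (``$1\in H^0$ is detected as soon as the sublevel set becomes nonempty'' ignores the Thom-isomorphism grading shift), and in any case $c(1,\Gamma_\phi)\le 0$ together with $c(1,\cdot)\le c(\mu,\cdot)$ does not yield $c(\mu,\Gamma_\phi)\ge 0$. The paper applies the fiber-at-$P$ diagram to $\overline{\Gamma_\phi}$ to get $c(1,\overline{\Gamma_\phi})\le 0$, and concludes via $c(\mu,\Gamma_\phi)=-c(1,\overline{\Gamma_\phi})$. In (iii) you never actually exhibit $\Gamma_{\phi\psi}$ as an $L_1+L_2$; the paper instead writes $\Gamma_\psi=\Psi_{\phi^{-1}}(\Gamma_{\phi\psi})$ and applies Lemma~\ref{inv_lagr}(v) to obtain $c(\psi)=c(\mu,\Gamma_{\phi\psi}-\Gamma_\phi)$, after which the triangle inequality (ii) of that lemma applies directly to the difference $\Gamma_{\phi\psi}+\overline{\Gamma_\phi}$. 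In (ii) the link you need between $c(\phi^{-1})$ and $c(1,\Gamma_\phi)$ also requires Lemma~\ref{inv_lagr}(v) (applied to $L=0_{S^{2n}}$, $\Psi=\Psi_\phi$, giving $c(u,\Gamma_\phi)=c(u,\overline{\Gamma_{\phi^{-1}}})$), which you do not invoke. Item (v) is fine apart from the direction of the sublevel inclusion, which should read $(E^a)_{S_2}\subset (E^a)_{S_1}$.
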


\begin{proof}
\begin{enumerate}
\renewcommand{\labelenumi}{(\roman{enumi})}
\item We will prove that $c(1,\overline{\Gamma_{\phi}})\leq 0$ for any $\phi$, and then use Lemma \ref{inv_lagr}(iii) to conclude that
$$ c(\phi)=c(\mu,\Gamma_{\phi})=-c(1,\overline{\Gamma_{\phi}}) \geq 0.$$
Since $c(1,\Gamma_{\phi})=\text{inf}\,\{\,a\in \mathbb{R} \;|\;i_{a}^{\phantom{a}\ast}(1)\neq 0\,\}$, we need to prove that
$i_0^{\phantom{0}\ast}(1)\neq 0$. Let $S: E\rightarrow\mathbb{R}$ be a g.f.q.i. for $\overline{\Gamma_{\phi}}$, and recall that we regard $S^{2n}$ as the 1-point compactification $\mathbb{R}^{2n}\cup \{P\}$. Consider the commutative diagram
\begin{displaymath}
\xymatrix{
 H^{\ast}(E^0,E^{-\infty})   \ar[r] &
 H^{\ast}(E_P^{\phantom{P}0},E_P^{\phantom{P}-\infty}) \\
 H^{\ast}(S^{2n}) \ar[r] \ar[u]_{(i_0)^{\ast}} &
 H^{\ast}(\{P\}) \ar[u]_{\cong}}
\end{displaymath}
where the horizontal maps are induced by the inclusions $\{P\}\hookrightarrow S^{2n}$ and $E_P\hookrightarrow E$. Since $\phi$ is compactly supported, $\Gamma_{\phi}$ and hence $\overline{\Gamma_{\phi}}$ coincide with the 0-section on a neighborhood of $P$, so $S_{|E_P}: E_P \rightarrow \mathbb{R}$ is a quadratic form. It follows that the vertical map on the right hand side is an isomorphism. Since the horizontal map on the bottom sends 1 to 1, we see that $i_0^{\phantom{0}\ast}(1)\neq 0$ as we wanted.

\item Note first that $c(u,\Gamma_{\phi})=c(u,\overline{\Gamma_{\phi^{-1}}})$ for all $u$ (apply Lemma \ref{inv_lagr}(v) to $L=0_B$ and $\Psi=\Psi_{\phi}$). Using this, the result then follows from Lemma \ref{inv_lagr}(iii)-(iv).

\item Using (ii),(v) and (iii) of Proposition \ref{inv_lagr} we have
$$ c(\psi)=c(\mu,\Gamma_{\psi})=c\big(\mu\cup 1, \Psi_{\phi^{-1}}(\Gamma_{\phi\psi})\big) 
=c\big(\mu\cup 1, \Gamma_{\phi\psi}-\Psi_{\phi}(0_B)\big)\geq$$
$$ c(\mu,\Gamma_{\phi\psi})+c\big(1,\overline{\Psi_{\phi}(0_B)}\big)
=c(\mu,\Gamma_{\phi\psi})+c(1,\overline{\Gamma_{\phi}})
=c(\mu,\Gamma_{\phi\psi})-c(1,\Gamma_{\phi})$$
$$=c(\phi\psi)-c(\phi)$$
i.e. $c(\phi\psi)\leq c(\phi)+c(\psi)$ as we wanted.

\item Let $\psi$ be the time-1 map of a Hamiltonian isotopy $\psi_t$, and consider the map $t \mapsto c(\psi_t\phi\psi_t^{\phantom{t}-1})$. We know that this map is continuous (by Lemma \ref{inv_lagr}(i) and Theorem \ref{existence}) and that it takes values in the totally disconnected set $\Lambda(\phi)$, since $\Lambda(\psi_t\phi\psi_t^{\phantom{t}-1})=\Lambda(\phi)$ (see for incstance \cite[5.2]{HZ}). It follows that it is independent of $t$, so in particular $c(\phi)=c(\psi\phi\psi^{\phantom{t}-1})$.

\item We know by Proposition \ref{parordsymp} that there are generating functions $S_{\phi_1}$, $S_{\phi_2}$ for $\Gamma_{\phi_1}$, $\Gamma_{\phi_2}$ respectively such that $S_{\phi_1}\leq S_{\phi_2}$. So for any $a$ we have inclusion of sublevel sets $(E^a)_{S_{\phi_2}}\subset (E^a)_{S_{\phi_1}}$ and this easily implies that $c(u,\Gamma_{\phi_1})\leq c(u,\Gamma_{\phi_2})$ for any $u$. In particular, $c(\phi_1)\leq c(\phi_2)$ as we wanted.
\end{enumerate}
\end{proof}

\subsection{The Viterbo partial order}\label{Vitorder}

The Viterbo partial order $\leq_V$ on $\text{Ham}^c\,(\mathbb{R}^{2n})$ is defined as follows. Given $\phi_1$, $\phi_2$ in $\text{Ham}^c\,(\mathbb{R}^{2n})$ we set
$$ \phi_1 \leq_V \phi_2 \quad \text{if} \quad c(\phi_1\phi_2^{\phantom{2}-1})=0.$$
Using the properties in Proposition \ref{inv_sympl} it is immediate to see that $\leq_V$ is indeed a partial order, that it is bi-invariant (i.e. if $\phi_1 \leq_V \phi_2$ and $\psi_1 \leq_V \psi_2$ then $\phi_1\psi_1 \leq_V \phi_2\psi_2$), and that if $\phi_1 \leq \phi_2$ in the sense of \ref{symplectomorphisms} then $\phi_1 \leq_V \phi_2$. In particular this implies that $\leq$ is also a partial order.

\subsection{The Viterbo capacity}\label{Vit_capacity}

Given an open and bounded domain $\mathcal{U}$ of $\mathbb{R}^{2n}$, its \textbf{Viterbo capacity} is defined by
$c(\mathcal{U}):=\text{sup}\,\{\,c(\phi) \;|\; \phi\in\text{Ham}\,(\mathcal{U})\,\}$ where $\text{Ham}\,(\mathcal{U})$ denotes the set of time-1 maps of Hamiltonian functions supported in $\mathcal{U}$. By the following lemma, $c(\mathcal{U})$ is a finite real number.

\begin{lemma}\label{questo5}
If $\phi\in\text{Ham}\,(\mathcal{U})$  and $\psi$ is such that $\psi(\mathcal{U})\cap\mathcal{U}= \emptyset$, then $c(\phi)\leq\gamma(\psi)$ where $\gamma(\psi):=c(\psi)+c(\psi^{-1})$.
\end{lemma}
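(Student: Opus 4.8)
The plan is to produce, from the hypotheses, a Hamiltonian symplectomorphism whose $c$-value is both bounded below by $c(\phi)$ and bounded above by $\gamma(\psi)$, using the product and conjugation properties of $c$ established in Proposition \ref{inv_sympl}. The key observation is that since $\psi(\mathcal{U})\cap\mathcal{U}=\emptyset$ and $\phi$ is supported in $\mathcal{U}$, the symplectomorphism $\psi\phi\psi^{-1}$ is supported in $\psi(\mathcal{U})$, which is disjoint from the support of $\phi$. Two symplectomorphisms with disjoint supports commute, so $\phi$ and $\psi\phi\psi^{-1}$ commute, and moreover $\phi\cdot(\psi\phi^{-1}\psi^{-1})$ is (up to checking orientations of the composition) a symplectomorphism that will be controlled by $\psi$.

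First I would record the consequences of Proposition \ref{inv_sympl}(iii) and (iv): applying subadditivity to the factorization $\psi = (\psi\phi^{-1}\psi^{-1})\cdot(\psi\phi\psi^{-1})$ — or more precisely writing $\psi\phi\psi^{-1} = (\psi\phi\psi^{-1}\phi^{-1})\cdot\phi$ and rearranging — one gets an inequality relating $c(\psi\phi\psi^{-1})$, $c(\phi)$ and $c(\psi\phi\psi^{-1}\phi^{-1})$. By conjugation invariance (iv), $c(\psi\phi\psi^{-1}) = c(\phi)$. The remaining term $c(\psi\phi\psi^{-1}\phi^{-1})$ — the commutator — is then the object to estimate. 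Here I would use disjointness of supports: $\psi\phi\psi^{-1}$ and $\phi$ commute, so the commutator is trivial in a sense, but the useful estimate is that $c$ of the commutator is controlled by $\gamma(\psi)=c(\psi)+c(\psi^{-1})$, because the commutator can be written as $(\psi\phi\psi^{-1})\cdot\phi^{-1}$ and $c(\psi\phi\psi^{-1})\le c(\psi)+c(\phi)+c(\psi^{-1})$ is not quite what is wanted — instead one should conjugate the other way and exploit that $c(\phi\psi^{-1}\phi^{-1}) = c(\psi^{-1})$ by (iv), giving $c(\psi\phi\psi^{-1}\phi^{-1}) = c(\psi\cdot(\phi\psi^{-1}\phi^{-1})) \le c(\psi) + c(\psi^{-1}) = \gamma(\psi)$.

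Assembling these: from the subadditivity estimate $c(\phi) = c(\psi\phi\psi^{-1}) \le c(\psi\phi\psi^{-1}\phi^{-1}) + c(\phi)$ one does not immediately win, so the correct route is the reverse: write $\phi = (\psi\phi\psi^{-1}\phi^{-1})^{-1}\cdot(\psi\phi\psi^{-1})$, apply (iii) to get $c(\phi) \le c\big((\psi\phi\psi^{-1}\phi^{-1})^{-1}\big) + c(\psi\phi\psi^{-1})$, use (iv) on the second term to replace it by $c(\phi)$ — again circular. The genuinely effective step, and the one I expect to be the main obstacle, is to pick the right three-fold factorization of $\phi$ or of a related map so that \emph{two} of the three $c$-values collapse via conjugation invariance to $c(\phi)$ and $-c(\phi)$ (using that $c(\phi)+c(\phi^{-1})\ge 0$ from (i)–(ii) but not equality in general), leaving a clean bound by $\gamma(\psi)$. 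Concretely I would try $c(\phi)\le c(\psi) + c(\psi^{-1}\phi)$ and then iterate, commuting $\phi$ past $\psi^{-1}$ using disjointness of supports between $\psi(\mathcal U)$ and $\mathcal U$ to turn $\psi^{-1}\phi$ into $\phi\psi^{-1}$ up to a conjugation that does not change $c$; after one more application of (iii) and (iv) the term $c(\phi)$ drops out and $c(\phi)\le c(\psi)+c(\psi^{-1})=\gamma(\psi)$ remains.

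The only non-formal ingredient is the disjoint-supports commutation, which is elementary, plus possibly a check that the Hamiltonian normalization convention (functions vanishing outside the support) is compatible with these manipulations so that all intermediate maps genuinely lie in $\mathrm{Ham}^c(\mathbb{R}^{2n})$ and Proposition \ref{inv_sympl} applies; that verification is routine. So the proof is essentially a bookkeeping argument in the group $\mathrm{Ham}^c(\mathbb{R}^{2n})$ driven by properties (iii) and (iv), with the disjointness hypothesis entering exactly once to kill the conjugation-dependence on $\phi$.
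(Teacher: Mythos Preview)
Your approach has a genuine gap: the algebraic manipulations you propose, using only subadditivity (iii), conjugation invariance (iv), and the commutation coming from disjoint supports, cannot close. Trace through your final attempt: from $c(\phi)\le c(\psi)+c(\psi^{-1}\phi)$ and $c(\psi^{-1}\phi)=c(\phi\psi^{-1})$ (which, incidentally, holds for \emph{any} $\phi,\psi$ by (iv) and uses no disjointness), one more application of (iii) gives $c(\phi\psi^{-1})\le c(\phi)+c(\psi^{-1})$, and the chain collapses to $c(\phi)\le c(\psi)+c(\phi)+c(\psi^{-1})$, i.e.\ $0\le\gamma(\psi)$, which is trivial. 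Every variant you sketch runs into the same circularity: subadditivity always reintroduces $c(\phi)$ on the right with a positive sign, and conjugation invariance never removes it. The disjoint-supports hypothesis, in your argument, is only ever invoked to produce a commutation relation, and commutation alone is too weak.

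The paper's proof uses a genuinely different mechanism. It shows directly that $c(\psi\phi)=c(\psi)$, and then a single application of (iii) gives $c(\phi)\le c(\psi^{-1})+c(\psi\phi)=\gamma(\psi)$. The equality $c(\psi\phi)=c(\psi)$ is obtained by a \emph{rigidity} argument, not an algebraic one: let $\phi_t$ be a Hamiltonian isotopy from $\mathrm{id}$ to $\phi$ supported in $\mathcal U$; any fixed point $x_t$ of $\psi\phi_t$ must lie outside $\mathcal U$ (since $\psi(\mathcal U)\cap\mathcal U=\emptyset$), so $x_t$ is already fixed by $\phi_t$, hence by $\psi$, and $\mathcal A_{\psi\phi_t}(x_t)=\mathcal A_{\psi}(x_t)$. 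Thus the continuous map $t\mapsto c(\psi\phi_t)$ takes values in the totally disconnected set $\Lambda(\psi)$, so it is constant, giving $c(\psi\phi)=c(\psi)$. This is exactly where the disjointness hypothesis does real work, and it uses the spectral nature of $c$ (that it is an action value), which is information beyond the formal properties (i)--(v).
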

\begin{proof} 
We first show that under the hypotheses of the lemma we have $c(\psi\phi)=c(\psi)$. Let $x_t$ be a fixed point for $\psi\phi_t$ such that $c(\psi\phi_t)=\mathcal{A}_{\psi\phi_t}(x_t)$. Since $\psi(\mathcal{U})\cap\mathcal{U}= \emptyset$, we see that $x_t\notin\mathcal{U}$. It follows that $x_t$ is also a fixed point for all $\phi_t$, hence for $\psi$. Moreover
$\mathcal{A}_{\psi\phi_t}(x_t)=\mathcal{A}_{\psi}(x_t)$. Thus the continuous map $t \mapsto c(\psi\phi_t)$ takes values in $\Lambda(\psi)$ and hence is independent of $t$. In particular we get that $c(\psi\phi)=c(\psi)$ as we claimed. Using this and Proposition \ref{inv_sympl}(iii) it then follows that
$$c(\phi)\leq c(\psi\phi)+c(\psi^{-1})=c(\psi)+c(\psi^{-1})=\gamma(\psi).$$
\end{proof}

We can extend the definition to arbitrary domains of $\mathbb{R}^{2n}$ by setting
$$ c(\mathcal{V}):=\text{sup}\;\{\;c(\mathcal{U}) \;|\; \mathcal{U}\subset\mathcal{V}, \;\mathcal{U} \;\text{bounded} \;\}$$
if $\mathcal{V}$ is open, and
$$ c(A):=\text{inf}\;\{\;c(\mathcal{V}) \;|\; \mathcal{V} \;\text{open,}\;A\subset\mathcal{V}\;\}$$
for an arbitrary domain $A$.

\begin{thm}
$c$ is a (relative) capacity in $\mathbb{R}^{2n}$, i.e. it satisfies the following properties:
\vspace{-0.2cm}
\begin{enumerate}
\renewcommand{\labelenumi}{(\roman{enumi})}
\item (Symplectic Invariance) For any Hamiltonian symplectomorphism $\psi$ of $\mathbb{R}^{2n}$ we have $$c(\psi(\mathcal{U}))=c(\mathcal{U}).$$
\item (Monotonicity) If $\mathcal{U}_1\subset\mathcal{U}_2$, then $c(\mathcal{U}_1)\leq c(\mathcal{U}_2)$.
\item (Conformality) $c(\alpha\mathcal{U})=\alpha^2c(\mathcal{U})$ for any positive constant $\alpha$.
\item (Non-triviality) $c\,(B^{2n}(1))>0$ and $c\,(C^{2n}(1))<\infty$.
\end{enumerate}
\end{thm}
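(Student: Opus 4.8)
The plan is to verify the four capacity axioms in turn, drawing on the properties of $c(\phi)$ from Proposition \ref{inv_sympl} together with the limit constructions in \ref{Vit_capacity}. For Symplectic Invariance, I would first treat the case of a bounded open $\mathcal{U}$: conjugation by $\psi$ gives a bijection $\mathrm{Ham}(\mathcal{U})\to\mathrm{Ham}(\psi(\mathcal{U}))$, $\phi\mapsto\psi\phi\psi^{-1}$, and by Proposition \ref{inv_sympl}(iv) we have $c(\psi\phi\psi^{-1})=c(\phi)$, so taking suprema yields $c(\psi(\mathcal{U}))=c(\mathcal{U})$. The passage to general open $\mathcal{V}$ and then to arbitrary $A$ is formal, since $\psi$ maps bounded subdomains to bounded subdomains and open neighborhoods to open neighborhoods, so the defining suprema and infima match up. Monotonicity is essentially immediate: if $\mathcal{U}_1\subset\mathcal{U}_2$ then $\mathrm{Ham}(\mathcal{U}_1)\subset\mathrm{Ham}(\mathcal{U}_2)$ when both are bounded and open, so the sup over the smaller set is at most the sup over the larger; again the extension to general domains is a routine unwinding of the definitions of $c(\mathcal{V})$ and $c(A)$.

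For Conformality, the key point is to track how the whole generating-function machinery transforms under the dilation $x\mapsto\alpha x$. If $\phi$ is supported in $\mathcal{U}$ and $\delta_\alpha(x)=\alpha x$, then $\delta_\alpha\phi\delta_\alpha^{-1}$ is supported in $\alpha\mathcal{U}$, and since $\delta_\alpha^*\omega_0=\alpha^2\omega_0$ the Hamiltonian and hence the action functional rescale by $\alpha^2$; concretely, if $F$ is the compactly supported function with $\phi^*\lambda_0-\lambda_0=dF$, then the corresponding function for the conjugate is $\alpha^2 F\circ\delta_\alpha^{-1}$, so by Lemma \ref{crucialsympl} the action spectrum scales as $\Lambda(\delta_\alpha\phi\delta_\alpha^{-1})=\alpha^2\Lambda(\phi)$. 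One then checks that a g.f.q.i. $S$ for $\Gamma_\phi$ produces (after the evident change of fiber variables) a g.f.q.i. for $\Gamma_{\delta_\alpha\phi\delta_\alpha^{-1}}$ equal to $\alpha^2 S$ up to the allowed operations, whence $c(\delta_\alpha\phi\delta_\alpha^{-1})=\alpha^2 c(\phi)$. Taking the supremum over $\phi\in\mathrm{Ham}(\mathcal{U})$ gives $c(\alpha\mathcal{U})=\alpha^2 c(\mathcal{U})$ for bounded open domains, and the general case follows by the same limiting argument as before.

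For Non-triviality, the finiteness $c(C^{2n}(1))<\infty$ follows from Lemma \ref{questo5}: the cylinder $C^{2n}(1)$ can be displaced from itself by a (compactly supported) Hamiltonian symplectomorphism $\psi$ — translate in the $x_2$-direction, cut off far away — so for every bounded $\mathcal{U}\subset C^{2n}(1)$ and every $\phi\in\mathrm{Ham}(\mathcal{U})$ we have $c(\phi)\leq\gamma(\psi)$, a fixed finite bound; hence $c(C^{2n}(1))\leq\gamma(\psi)<\infty$. For $c(B^{2n}(1))>0$ one must exhibit a single $\phi\in\mathrm{Ham}(B^{2n}(1))$ with $c(\phi)>0$: take a radial Hamiltonian $H$ supported in $B^{2n}(1)$, depending only on $\pi\sum(x_i^2+y_i^2)$, which is a small positive bump, so that its time-$1$ flow has a fixed point at the origin with strictly positive symplectic action $\mathcal{A}_\phi(0)=H(0)>0$; then since $c(\phi)\geq 0$ by Proposition \ref{inv_sympl}(i) and $c(\phi)$ is an element of $\Lambda(\phi)$, a computation of the action spectrum of such a rotation-type Hamiltonian (all fixed points lie on invariant circles, with actions determined by $H$ and its derivative) shows $c(\phi)$ picks out a positive value. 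I expect this last point — pinning down that the selected critical value $c(\mu,\Gamma_\phi)$ is genuinely positive and not $0$ for an explicitly chosen bump — to be the main obstacle, since it requires either an honest Morse-theoretic computation of the generating function of a radial Hamiltonian or an appeal to a normalization/monotonicity argument ($\mathrm{id}\leq\phi$ with $\phi\neq\mathrm{id}$ forces $c(\phi)>0$ via Proposition \ref{inv_sympl}(ii) and (v)); I would use the latter route, choosing $H\geq 0$ and $H\not\equiv 0$ so that $\mathrm{id}\leq\phi$, whence $c(\phi)\geq c(\mathrm{id})=0$, and then ruling out equality by observing that $c(\phi)=0=c(\phi^{-1})$ would force $\phi=\mathrm{id}$.
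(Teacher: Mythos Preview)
Your treatments of Symplectic Invariance and Monotonicity match the paper's. The substantive differences are in Conformality and Non-triviality, and there is a genuine gap in your argument for $c(C^{2n}(1))<\infty$.

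For the finiteness of $c(C^{2n}(1))$: the cylinder $C^{2n}(1)=B^2(1)\times\mathbb{R}^{2n-2}$ is unbounded, so it cannot be displaced from itself by any \emph{compactly supported} Hamiltonian symplectomorphism --- outside the support of $\psi$ the cylinder is fixed pointwise, hence $\psi(C^{2n}(1))\cap C^{2n}(1)\neq\emptyset$. Your argument via Lemma~\ref{questo5} needs a single $\psi$ giving a uniform bound $\gamma(\psi)$ valid for \emph{all} bounded $\mathcal{U}\subset C^{2n}(1)$, and no such $\psi$ exists. One could try a family $\psi_R$ displacing $B^2(1)\times B^{2n-2}(R)$ and argue that $\gamma(\psi_R)$ stays bounded independently of $R$, but that is an energy--capacity type inequality not available from the results assembled here. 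The paper instead defers non-triviality to Example~\ref{cap_ell}: from Traynor's homology computations one gets $c\big(E(\alpha_1,\dots,\alpha_n)\big)=\pi\alpha_1$, and since every bounded subset of $C^{2n}(R)$ lies in some ellipsoid with $\pi\alpha_1=R$, monotonicity yields $c(C^{2n}(R))=R$.

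Your approach to Conformality differs from the paper's. Rather than rescaling the generating function directly, the paper runs a continuity argument: for a conformal isotopy $\psi_t$ with $\psi_t^{\ast}\omega=\alpha(t)\,\omega$, the map $t\mapsto\frac{1}{\alpha(t)}\,c(\psi_t\phi\psi_t^{-1})$ is continuous and takes values in the totally disconnected set $\Lambda(\phi)$ (since $\Lambda(\psi_t\phi\psi_t^{-1})=\alpha(t)\Lambda(\phi)$), hence is constant. This sidesteps having to check how the identification $\tau$ and the g.f.q.i.\ transform under dilation; your direct rescaling is plausible but the claim that the rescaled $S$ is a g.f.q.i.\ for the conjugate ``up to the allowed operations'' would need to be verified. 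Finally, your order-theoretic argument for $c(B^{2n}(1))>0$ --- take $H\geq 0$, $H\not\equiv 0$ small so $\phi\neq\text{id}$; then $\phi^{-1}\leq\text{id}$ forces $c(\phi^{-1})=0$ by (i) and (v), whence $c(\phi)>0$ by (ii) --- is correct and more self-contained than the paper's, which again appeals to the explicit ellipsoid computation.
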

\begin{proof}
If $\phi\in\text{Ham}\,(\mathcal{U})$ then $\psi\phi\psi^{-1}\in\text{Ham}\,\big(\psi(\mathcal{U})\big)$, thus symplectic invariance follows from Proposition \ref{inv_sympl}(iv). Monotonicity is immediate from the definition, and non-triviality will be discussed in the example below. As for conformality, it can be seen as follows. Consider first a conformal symplectomorphism $\psi$ of $\mathbb{R}^{2n}$, i.e. $\psi^{\ast}\omega=\alpha\,\omega$ for some constant $\alpha$. Then $\Lambda\,(\psi\phi\psi^{-1})=\alpha\,\Lambda(\phi)$ (see \cite{HZ}, 5.2). Suppose that $\psi$ is isotopic to the identity through conformal symplectomorphisms, i.e. $\psi=\psi_t|_{t=1}$ with $\psi_t^{\,\ast}\omega=\alpha(t)\,\omega$ for some function $\alpha(t)$ with $\alpha(0)=1$ and $\alpha(1)=\alpha$. The continuous map $t\mapsto\frac{1}{\alpha(t)}\,c\,(\psi_t\phi\psi_t^{\,-1})$ takes values in the totally disconnected set $\Lambda(\phi)$, thus it is independent of $t$ and so in particular $c\,(\psi\phi\psi^{-1})=\alpha\,c(\phi)$. Applying this to the conformal symplectomorphism $\psi$: $(x,y)\mapsto (\alpha x, \alpha y)$ we get $c\,(\psi\phi\psi^{-1})=\alpha^2\,c(\phi)$. Since $\psi\phi\psi^{-1}\in\text{Ham}\,(\alpha\mathcal{U})$ if $\phi\in\text{Ham}\,(\mathcal{U})$, it follows that $c(\alpha\mathcal{U})=\alpha^2c(\mathcal{U})$ as we wanted.
\end{proof}

\begin{ex}\label{cap_ell}
Consider the ellipsoid
$$E(\alpha_1,\cdots,\alpha_n):=\{\,\frac{1}{\alpha_1}|z_1|^2+\cdots+\frac{1}{\alpha_n}|z_n|^2<1\,\}\subset\mathbb{R}^{2n}\equiv\mathbb{C}^n$$
where $0<\alpha_1\leq\alpha_2\leq\cdots\leq\alpha_n<\infty$. Using Traynor's calculations of symplectic homology of $E(\alpha_1,\cdots,\alpha_n)$ it is easy to see that $c\,\big(E(\alpha_1,\cdots,\alpha_n)\big)=\pi\alpha_1$ (see also \cite{Her}), in particular $c\big(B(R)\big)=R$. Since any bounded domain contained in $C^{2n}(R)$ is also contained in some ellipsoid $E(\alpha_1,\cdots,\alpha_n)$ with $\alpha_1=R$, it follows by monotonicity that $c\,(C^{2n}(R))=R$.
\end{ex}

\subsection{Symplectic homology} \label{sympl_hom}

We will now associate homology groups first to a compactly supported Hamiltonian symplectomorphism of $\mathbb{R}^{2n}$, by considering relative homology of sublevel sets of its generating function, and then, by a limit process, to domains of $\mathbb{R}^{2n}$. In this section we follow \cite{T} although we give a different proof of symplectic invariance of the homology groups (Proposition \ref{conjsympl}).\\
\\
Let $\phi$ be a compactly supported Hamiltonian symplectomorphism of $\mathbb{R}^{2n}$. Given real numbers $a$, $b$ not belonging to the action spectrum of $\phi$ and such that $-\infty<a<b\leq\infty$, we define the \textbf{$k$-th symplectic homology group} of $\phi$ with respect to the values $a,b$ by
$$G_k^{\;\;(a,b]}\,(\phi):=H_{k+\iota}\,(E^b, E^a) $$
where $E^c$, for $c\in \mathbb{R}\cup\infty$, denotes the sublevel set $\{\,x\in E\,|\,S(x)\leq c\,\}$ of a generating function $S:E\rightarrow \mathbb{R}$ for $\phi$ and $\iota$ is the index of the quadratic at infinity part of $S$. It follows from Theorem \ref{uniqueness} that the $G_k^{\;\;(a,b]}\,(\phi)$ are well-defined, i.e. do not depend on the choice of the generating function (see also \cite[3.6]{T}). Moreover, we will prove now that they are invariant by conjugation with a Hamiltonian symplectomorphism.

\begin{prop}\label{conjsympl}
For any $\phi$ and $\psi$ in $\text{Ham}^c\,(\mathbb{R}^{2n})$ we have an induced isomorphism 
$$\psi_{\ast}: G_{\ast}^{\;\;(a,b]}\,(\psi\phi\psi^{-1})\longrightarrow G_{\ast}^{\;\;(a,b]}\,(\phi).$$
\end{prop}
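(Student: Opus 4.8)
The plan is to build the isomorphism $\psi_\ast$ by interpolating between $\phi$ and $\psi\phi\psi^{-1}$ through conjugations by a Hamiltonian isotopy and tracking the sublevel sets of a continuously varying family of generating functions. Concretely, write $\psi$ as the time-$1$ map of a Hamiltonian isotopy $\psi_t$, $t\in[0,1]$, with $\psi_0=\text{id}$, and consider the family $\phi_t:=\psi_t\phi\psi_t^{-1}$, so that $\phi_0=\phi$ and $\phi_1=\psi\phi\psi^{-1}$. Each $\phi_t$ is again a compactly supported Hamiltonian symplectomorphism of $\mathbb{R}^{2n}$, so $\Gamma_{\phi_t}$ is a Lagrangian in $T^\ast S^{2n}$ Hamiltonian isotopic to the $0$-section; in fact $\Gamma_{\phi_t}=\Psi_{\psi_t}\Psi_{\phi}\Psi_{\psi_t}^{-1}(0\text{-section})$, and the family $t\mapsto\Gamma_{\phi_t}$ is a Hamiltonian isotopy. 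By Theorem \ref{existence} there is a continuous family of (special) g.f.q.i.\ $S_t:E\to\mathbb{R}$ with $S_t$ generating $\Gamma_{\phi_t}$, all on a fixed fiber bundle $E$ and with a fixed quadratic-at-infinity part $Q_\infty$ of fixed index $\iota$.

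First I would observe the key point, which is the analogue of what makes the Viterbo capacity conjugation-invariant: by Lemma \ref{crucialsympl} the critical values of $S_t$ are exactly the symplectic actions of the fixed points of $\phi_t$, i.e.\ the set of critical values of $S_t$ is $\Lambda(\phi_t)$, and since conjugation does not change the action spectrum ($\Lambda(\psi_t\phi\psi_t^{-1})=\Lambda(\phi)$, see \cite[5.2]{HZ}) we get $\Lambda(\phi_t)=\Lambda(\phi)$ for all $t$. Hence, as $t$ varies, no critical value of $S_t$ ever crosses the fixed levels $a$ or $b$, because $a,b\notin\Lambda(\phi)$ and $\Lambda(\phi)$ is a totally disconnected (in fact closed, nowhere dense) set, so the distance from $\{a,b\}$ to $\Lambda(\phi_t)$ stays bounded below. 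Next I would run the standard deformation argument of Morse theory for functions quadratic at infinity: since $S_t$ depends continuously on $t$ and no critical value passes through $a$ or $b$, the pairs of sublevel sets $(E^b_{t},E^a_{t})$ are all homeomorphic — more precisely, one constructs a continuous family of homotopy equivalences of pairs (pushing down the gradient-like flow, with control at infinity from the $Q_\infty$ part) — and therefore the relative homology groups $H_{k+\iota}(E^b_t,E^a_t)$ are canonically identified for all $t$. This yields a canonical isomorphism $G_\ast^{(a,b]}(\phi)=H_\ast(E^b_0,E^a_0)\cong H_\ast(E^b_1,E^a_1)=G_\ast^{(a,b]}(\psi\phi\psi^{-1})$, which we name $\psi_\ast$ (after checking via Theorem \ref{uniqueness} that it is independent of the choice of generating function and, by the usual concatenation argument, independent of the choice of isotopy $\psi_t$ joining $\text{id}$ to $\psi$ within its homotopy class — and that it only depends on $\psi$ since $\text{Ham}^c(\mathbb{R}^{2n})$ is connected).

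I expect the main obstacle to be the technical bookkeeping needed to make the deformation argument rigorous for generating functions \emph{quadratic at infinity}: one must ensure that the homotopy equivalences between sublevel sets can be chosen to respect the non-compact directions, i.e.\ that the deformation retraction is the identity near infinity where $S_t=Q_\infty$ is $t$-independent, so that the relative groups $H_\ast(E^b_t,E^a_t)$ (with $E^{-\infty}$-style normalizations as in \ref{Vit_inv}) are genuinely invariant. This is exactly the kind of argument already used to show the groups $G_k^{(a,b]}(\phi)$ are well defined (Theorem \ref{uniqueness}, cf.\ \cite[3.6]{T}), applied now to the one-parameter family $S_t$ rather than to two competing generating functions of the same Lagrangian; once that machinery is in place the conclusion is immediate. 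An alternative, which avoids re-proving the deformation lemma, is to invoke directly the continuation/invariance properties of the groups $G_k^{(a,b]}$ already established in \cite{T}: continuity of $t\mapsto S_t$ plus the fact that $a,b$ avoid all the critical values $\Lambda(\phi_t)=\Lambda(\phi)$ gives the isomorphism for free. I would present the argument in that streamlined form, emphasizing the single conceptual input — constancy of the action spectrum under conjugation together with Lemma \ref{crucialsympl} — and referring to \cite{T} for the underlying Morse-theoretic invariance.
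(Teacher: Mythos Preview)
Your proposal is correct and follows essentially the same route as the paper's own proof: interpolate via $\phi_t=\psi_t\phi\psi_t^{-1}$, use $\Lambda(\phi_t)=\Lambda(\phi)$ together with Lemma \ref{crucialsympl} to see that $a,b$ remain regular values of every $S_t$, and then run a Morse-theoretic deformation of sublevel sets (the paper states this as Lemma \ref{pseudomorse}), noting that the quadratic-at-infinity condition handles the non-compactness. The only addition you make is the bookkeeping about independence from the choice of isotopy and generating function, which the paper omits.
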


\noindent
To prove this we will need the following lemma.
\begin{lemma}\label{pseudomorse}
Let $f_t$, $t\in [0,1]$, be a continuous 1-parameter family of functions defined on a compact manifold $M$. Suppose that $a\in \mathbb{R}$ is a regular value of all $f_t$. Then there exists an isotopy $\theta_t$ of $M$ such that $\theta_t(M^a_{\phantom{a}0})=M^a_{\phantom{a}t}$, where $M^a_{\phantom{a}t}:=\{\, x\in M \, |\, f_t(x)\leq a\,\}$.
\end{lemma}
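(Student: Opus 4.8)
The statement to prove is Lemma~\ref{pseudomorse}: given a continuous family $f_t$ on a compact manifold $M$ with $a$ a regular value of every $f_t$, there is an isotopy $\theta_t$ with $\theta_t(M_0^a) = M_t^a$.

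The plan is to construct $\theta_t$ as the flow of a suitable time-dependent vector field. First I would upgrade the regularity hypothesis to something uniform: since $M$ is compact, $[0,1]$ is compact, and $a$ is a regular value of each $f_t$, a compactness argument shows there is $\varepsilon > 0$ and a neighborhood of the ``level set at parameter $t$'' on which $\|df_t\|$ is bounded below, uniformly in $t$ (I should be slightly careful that $f_t$ need only be continuous in $t$; if it is also $C^1$ in $x$ for each $t$ with some joint continuity of $df_t$, this is routine — and this is the setting in which the lemma is used, where $f_t$ comes from a continuous family of generating functions, hence is as smooth as needed in $x$). Then I would pick a smooth cutoff function $\rho_t$ supported in that neighborhood of $f_t^{-1}(a)$ and equal to $1$ near $f_t^{-1}(a)$, and define a vector field $X_t$ by requiring that, where $\rho_t = 1$, one has $df_t(X_t) = -\partial_t f_t$, for instance by setting
$$
X_t := -\,\rho_t\,\frac{\partial_t f_t}{\|\nabla f_t\|^2}\,\nabla f_t
$$
with respect to some fixed Riemannian metric on $M$. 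Outside the support of $\rho_t$ we set $X_t = 0$, so $X_t$ is a genuine (continuous in $t$, smooth in $x$) time-dependent vector field on the compact manifold $M$, hence integrates to a global isotopy $\theta_t$ with $\theta_0 = \mathrm{id}$.

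The key computation is then that $\theta_t$ carries sublevel sets to sublevel sets along the level $a$. Concretely, consider $g(t) := f_t(\theta_t(x))$ for $x$ with $f_0(x) = a$; near such points $\rho_t \equiv 1$, so
$$
\frac{d}{dt} g(t) = (\partial_t f_t)(\theta_t(x)) + df_t\big(X_t(\theta_t(x))\big) = (\partial_t f_t)(\theta_t(x)) - (\partial_t f_t)(\theta_t(x)) = 0,
$$
as long as $\theta_t(x)$ stays in the region where $\rho_t = 1$. Since $g(0) = a$ and $g$ is constant where it is defined, $\theta_t(x)$ never leaves the level set $f_t^{-1}(a)$; this shows $\theta_t(M_0^a) \subseteq M_t^a$ at the boundary. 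To get the full equality of sublevel sets I would run the same argument with the family reversed and the isotopy inverted: $\theta_t^{-1}$ is the flow of a vector field of the same type (for the family $s \mapsto f_{t-s}$, suitably reparametrized), so $\theta_t^{-1}(M_t^a) \subseteq M_0^a$ as well; combining gives $\theta_t(M_0^a) = M_t^a$. Alternatively, and more cleanly, one notes that $\theta_t$ maps the level set $f_0^{-1}(a)$ diffeomorphically onto $f_t^{-1}(a)$ (by the boundary computation applied in both directions) and maps $M \setminus f_0^{-1}(a)$ to $M \setminus f_t^{-1}(a)$; since $\theta_t$ is a diffeomorphism of the connected-component structure is preserved, and $\theta_0 = \mathrm{id}$ sends $M_0^a$ into itself, continuity in $t$ forces $\theta_t(M_0^a) = M_t^a$ for all $t$.

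The main obstacle is the uniformity in $t$: because $f_t$ is only assumed continuous in $t$, one must be careful that the lower bound on $\|\nabla f_t\|$ near the level set, the choice of neighborhoods, and the cutoffs $\rho_t$ can all be made to depend continuously (or at least measurably and locally boundedly) on $t$, so that $X_t$ is a legitimate time-dependent vector field whose flow exists for all $t \in [0,1]$ — here compactness of $M$ is what rescues us, guaranteeing completeness of the flow. Everything else is the standard Moser-type deformation argument, and the division by $\|\nabla f_t\|^2$ is harmless precisely because it only occurs where $\rho_t \neq 0$, i.e. near the level set where the regular-value hypothesis gives a positive lower bound.
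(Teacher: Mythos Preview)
Your Moser-type argument is clean and would work perfectly if the family were $C^1$ in $t$, but as written it does not prove the lemma as stated: your vector field $X_t$ involves $\partial_t f_t$, and the hypothesis is only that $t\mapsto f_t$ is continuous. You flag this yourself, but the discussion that follows talks about regularity in $x$ and joint continuity of $df_t$, neither of which gives you $\partial_t f_t$. Without it, the formula for $X_t$ is undefined and the key computation $\frac{d}{dt}f_t(\theta_t(x))=0$ cannot be carried out.

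The paper's proof is designed precisely to avoid differentiating in $t$. It first uses compactness to find an $\varepsilon>0$ with no critical values of any $f_t$ in $(a-\varepsilon,a+\varepsilon)$, then subdivides $[0,1]$ finely enough that consecutive $f_t$'s differ by less than $\varepsilon$ in $C^0$. On each subinterval it maps $f_{t_{i-1}}^{-1}(a)$ to $f_t^{-1}(a)$ by flowing along $\nabla f_t$ (gradient in $x$, for \emph{fixed} $t$) for time $a-f_t(x)$; this uses only smoothness in $x$ and the uniform absence of critical points near level $a$. Composing these pieces gives an isotopy of the level hypersurface, and then the isotopy extension theorem produces the ambient $\theta_t$. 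So the paper trades your single time-dependent vector field for a piecewise construction plus isotopy extension, buying exactly the ability to handle families that are merely continuous in $t$.

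If you are willing to strengthen the hypothesis to smooth dependence on $t$ (which is in fact what holds in the applications to generating functions of $\psi_t\phi\psi_t^{-1}$), your argument is correct and arguably more direct than the paper's; but as a proof of the lemma in the generality stated, it has a genuine gap.
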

\begin{proof}
Since $a$ is a regular value of $f_t$ for all $t\in [0,1]$, there exists an $\varepsilon>0$ such that there are no critical values of any $f_t$ in the interval $(a-\varepsilon,a+\varepsilon )$. Take a $\delta>0$ such that if $|t-s|<\delta$ then $|f_t(x)-f_s(x)|<\varepsilon$ for all $x\in M$, and consider a sequence $0=t_0<t_1<\cdots <t_{k-1}<t_k=1$ with $|t_i-t_{i-1}|<\delta$ for all $i=1,\cdots,k$. For $t_{i-1}<t<t_i$ define a diffeomorphism $\theta_t^{\phantom{t}i}:f_{t_{i-1}}^{\phantom{t_{i-1}}-1}(a)\rightarrow f_t^{\phantom{t}-1}(a)$ by sending a point $x$ of $f_{t_{i-1}}^{\phantom{t_{i-1}}-1}(a)$ to the point obtained by following the flow of the (normalized) gradient $\bigtriangledown f_t$ for a time $a-f_t(x)$. Note that by construction $\bigtriangledown f_t$ will never be $0$ in this process. Note also that (after taking a smaller subdivision if needed) $\bigtriangledown f_t$ is transverse to $f_{t_{i-1}}^{\phantom{t_{i-1}}-1}(a)$, so
$\theta_t^{\phantom{t}i}$ is indeed a diffeomorphism. We can now define a 1-parameter family of diffeomorphisms $\theta_t:f_0^{\phantom{0}-1}(a)\rightarrow f_t^{\phantom{t}-1}(a)$ by defining inductively $\theta_t=\theta_t^{\phantom{t}i}\circ \theta_{t_{i-1}}$ for $t_{i-1}<t<t_i$. A global isotopy as in the statement is now obtained by applying the isotopy extension theorem.
\end{proof}

\begin{proof}[Proof of Proposition \ref{conjsympl}]
Let $\psi_t$ be a Hamiltonian isotopy starting at the identity and ending at $\psi_1=\psi$. We have $\Lambda\,\big(\psi_t\phi\psi_t^{-1}\big)=\Lambda(\phi)$ for all $t$ thus if we consider a continuous family $S_t:\mathbb{R}^{2n}\times\mathbb{R}^{N}\longrightarrow \mathbb{R}$ of generating functions, each $S_t$ generating the corresponding $\psi_t\phi\psi_t^{-1}$, then by Lemma \ref{crucialsympl} the set $\Lambda\,\big(\psi_t\phi\psi_t^{-1}\big)$ of critical values of $S_t$ is independent of $t$. Since $a$ and $b$ are regular values for $S_0$ it follows that they are regular values for all $S_t$, and so we can conclude using an analogue of Lemma \ref{pseudomorse} for pairs of sublevel sets. Note that we can do it even though $\mathbb{R}^{2n}\times\mathbb{R}^{N}$ is not compact, because the functions $S_t$ are (special) quadratic at infinity.
\end{proof}

\noindent
Consider now a domain $\mathcal{U}$ of $\mathbb{R}^{2n}$. Given $a,b\in\mathbb{R}$ we denote by $\text{Ham}_{a,b}^{\;\;\;\;\;c}\,(\mathcal{U})$ the set of compactly supported Hamiltonian symplectomorphisms of $\mathbb{R}^{2n}$ that are the time-1 map of a Hamiltonian function which is supported in $\mathcal{U}$ and whose action spectrum does not contain $a$ and $b$. Note that $\text{Ham}_{a,b}^{\;\;\;\;\;c}\,(\mathcal{U})$ is directed with respect to the partial order $\leq$, i.e. for any $\phi$, $\psi$ in $\text{Ham}_{a,b}^{\;\;\;\;\;c}\,(\mathcal{U})$ there is a $\varphi$ in $\text{Ham}_{a,b}^{\;\;\;\;\;c}\,(\mathcal{U})$ such that $\phi\leq\varphi$ and $\psi\leq\varphi$. Recall that if $\phi_1\leq\phi_2$ we have an induced homomorphism $\lambda_1^{\phantom{1}2}: G_k^{\;\;(a,b]}\,(\phi_2)\longrightarrow G_k^{\;\;(a,b]}\,(\phi_1)$. Note that given $\phi_1$, $\phi_2$, $\phi_3$ in $\text{Ham}_{a,b}^{\;\;\;\;\;c}\,(\mathcal{U})$ with $\phi_1\leq\phi_2\leq\phi_3$, it holds $\lambda_3^{\phantom{3}2}\circ \lambda_2^{\phantom{2}1}= \lambda_3^{\phantom{3}1}$ and $\lambda_i^{\phantom{i}i}=\text{id}$. This means in particular that $\lbrace G_k^{\;\;(a,b]}\,(\phi_i)\rbrace_{\phi_i\in\text{Ham}_{a,b}^{\;\;\;\;\;c}\,(\mathcal{U})}$ is an inversely directed family of groups, so we can define the 
\textbf{$k$-th symplectic homology group} $G_k^{\;\;(a,b]}\,(\mathcal{U})$ of $\mathcal{U}$ with respect to the values $a,b$ to be the inverse limit of this family. Note that $G_k^{\;\;(a,b]}\,(\mathcal{U})$ can be calculated by any sequence $\phi_1\leq\phi_2\leq\phi_3\leq \cdots$ such that the associated Hamiltonians get arbitrarily large.

\begin{thm}[Symplectic Invariance]
For any domain $\mathcal{U}$ in $\mathbb{R}^{2n}$ and any Hamiltonian symplectomorphism $\psi$ we have an induced isomorphism $\psi_{\ast}:G_{\ast}^{\;\;(a,b]}\,\big(\psi(\mathcal{U})\big)\longrightarrow G_{\ast}^{\;\;(a,b]}\,(\mathcal{U})$.
\end{thm}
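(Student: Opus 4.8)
The plan is to reduce the statement for domains to the already-established invariance for individual symplectomorphisms (Proposition \ref{conjsympl}), using the fact that conjugation by $\psi$ sets up a bijection between the relevant directed sets of Hamiltonian symplectomorphisms which is compatible with the partial order $\leq$ and with the structure maps $\lambda_i^{\phantom{i}j}$. Concretely, for a domain $\mathcal{U}$ and a fixed Hamiltonian symplectomorphism $\psi$, conjugation $\phi\mapsto\psi\phi\psi^{-1}$ sends $\text{Ham}\,(\mathcal{U})$ bijectively onto $\text{Ham}\,(\psi(\mathcal{U}))$: indeed if $\phi$ is the time-$1$ map of a Hamiltonian supported in $\mathcal{U}$, then $\psi\phi\psi^{-1}$ is the time-$1$ map of the Hamiltonian $H_t\circ\psi^{-1}$, which is supported in $\psi(\mathcal{U})$, and this operation is invertible with inverse conjugation by $\psi^{-1}$. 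Moreover, since the action spectrum is conjugation-invariant, $\Lambda(\psi\phi\psi^{-1})=\Lambda(\phi)$, so $\phi\in\text{Ham}_{a,b}^{\;\;\;\;\;c}(\mathcal{U})$ if and only if $\psi\phi\psi^{-1}\in\text{Ham}_{a,b}^{\;\;\;\;\;c}(\psi(\mathcal{U}))$.

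The key steps, in order, are as follows. First I would record that the above conjugation bijection is order-preserving for $\leq$: if $\phi_1\leq\phi_2$, meaning $\phi_2\phi_1^{-1}$ is the time-$1$ flow of a non-negative Hamiltonian $H_t$, then $(\psi\phi_2\psi^{-1})(\psi\phi_1\psi^{-1})^{-1}=\psi\,(\phi_2\phi_1^{-1})\,\psi^{-1}$ is the time-$1$ flow of $H_t\circ\psi^{-1}$, which is still non-negative; hence $\psi\phi_1\psi^{-1}\leq\psi\phi_2\psi^{-1}$, and the same for the converse via $\psi^{-1}$. Thus conjugation gives an isomorphism of the inversely directed index sets. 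Second, for each $\phi\in\text{Ham}_{a,b}^{\;\;\;\;\;c}(\mathcal{U})$, Proposition \ref{conjsympl} supplies a canonical isomorphism $\psi_{\ast}:G_{\ast}^{\;\;(a,b]}(\psi\phi\psi^{-1})\xrightarrow{\ \cong\ }G_{\ast}^{\;\;(a,b]}(\phi)$. Third, I would check that these isomorphisms commute with the structure maps of the two inverse systems: given $\phi_1\leq\phi_2$ in $\text{Ham}_{a,b}^{\;\;\;\;\;c}(\mathcal{U})$, the square relating $\lambda_1^{\phantom{1}2}$ for the $\phi_i$ and the corresponding structure map for the $\psi\phi_i\psi^{-1}$ commutes via the two copies of $\psi_{\ast}$ — this follows because the whole construction of the conjugation isomorphism in Proposition \ref{conjsympl} is through a continuous family of generating functions obtained by conjugating by the Hamiltonian isotopy $\psi_t$, and Proposition \ref{parordsymp} together with Theorem \ref{existence} lets one arrange these families compatibly for $\phi_1$ and $\phi_2$ simultaneously, so that the induced maps on sublevel-set pairs intertwine the inclusion-induced $\lambda$'s. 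Fourth, passing to the inverse limit over the order-isomorphic index sets yields the desired isomorphism $\psi_{\ast}:G_{\ast}^{\;\;(a,b]}(\psi(\mathcal{U}))\longrightarrow G_{\ast}^{\;\;(a,b]}(\mathcal{U})$.

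The main obstacle is the third step: verifying that the conjugation isomorphisms are natural with respect to the directed-system structure maps, i.e. that one genuinely gets a morphism of inverse systems rather than merely isomorphisms of the individual terms. The point is that $\lambda_i^{\phantom{i}j}$ is induced by an inclusion of sublevel sets coming from an inequality $S_{\phi_i}\leq S_{\phi_j}$ of chosen generating functions (Proposition \ref{parordsymp}), while $\psi_{\ast}$ is induced by a continuous deformation of generating functions along the conjugating isotopy; one must choose these two families coherently — for instance by using Theorem \ref{existence} to transport a dominating pair $S_{\phi_1}\leq S_{\phi_2}$ along $\psi_t$, keeping the inequality of sublevel sets throughout the deformation — so that the resulting square of maps on relative homology commutes on the nose. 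Once naturality is in hand, the inverse-limit functor does the rest formally, and the compatibility with compositions ($\lambda_3^{\phantom{3}2}\circ\lambda_2^{\phantom{2}1}=\lambda_3^{\phantom{3}1}$) guarantees that the limiting map is well-defined and independent of the chosen cofinal sequence.
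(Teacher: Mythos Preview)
Your proposal is correct and follows essentially the same route as the paper: take an unbounded ordered sequence in $\text{Ham}_{a,b}^{\;\;\;\;\;c}(\mathcal{U})$, conjugate it by $\psi$ to get one in $\text{Ham}_{a,b}^{\;\;\;\;\;c}(\psi(\mathcal{U}))$, invoke Proposition~\ref{conjsympl} termwise, and pass to the inverse limit. The paper simply asserts that the conjugation isomorphisms commute with the $\lambda_i^{\phantom{i}j}$, whereas you correctly flag this naturality as the one step requiring care and sketch how to arrange it; but the overall argument is the same.
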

\begin{proof}
Let $\phi_1\leq\phi_2\leq\phi_3\leq \cdots$ be an unbounded ordered sequence supported in $\mathcal{U}$. Then $\psi\phi_1\psi^{-1}\leq\psi\phi_2\psi^{-1}\leq\psi\phi_3\psi^{-1}\leq \cdots$  is an unbounded ordered sequence supported in $\psi(\mathcal{U})$. By Proposition \ref{conjsympl} we have isomorphisms $\psi_i^{\:\ast}: G_{\ast}^{\;\;(a,b]}\,(\psi\phi_i\psi^{-1})\longrightarrow G_{\ast}^{\;\;(a,b]}\,(\phi_i)$, commuting with the $\lambda_i^{\phantom{i}j}$ of the limit process. Thus we get an induced isomorphism between $G_{\ast}^{\;\;(a,b]}\,(\psi(\mathcal{U}))$ and $G_{\ast}^{\;\;(a,b]}\,(\mathcal{U})$.
\end{proof}

\begin{thm}[Monotonicity]
Every inclusion of domains induces a homomorphism of homology groups (reversing the order) with the following functorial properties: 
\begin{enumerate}
\renewcommand{\labelenumi}{(\roman{enumi})}
\item
If $\mathcal{U}_1\subset\mathcal{U}_2\subset\mathcal{U}_3$ then the following diagram commutes
\begin{displaymath}
\xymatrix{
 G_{\ast}^{\;\;(a,b]}\,(\mathcal{U}_3) \ar[r] \ar[rd] &
 G_{\ast}^{\;\;(a,b]}\,(\mathcal{U}_2) \ar[d] \\
 & G_{\ast}^{\;\;(a,b]}\,(\mathcal{U}_1).}
\end{displaymath} 
\item
If $\mathcal{U}_1\subset\mathcal{U}_2$, then for any Hamiltonian symplectomorphism $\psi$ the following diagram commutes
\begin{displaymath}
\xymatrix{
 G_{\ast}^{\;\;(a,b]}\,(\mathcal{U}_2) \ar[r] &
 G_{\ast}^{\;\;(a,b]}\,(\mathcal{U}_1)  \\
 G_{\ast}^{\;\;(a,b]}\,\big(\psi(\mathcal{U}_2)\big) \ar[u]^{\psi_{\ast}} \ar[r] &  
 G_{\ast}^{\;\;(a,b]}\,\big(\psi(\mathcal{U}_1)\big) \ar[u]_{\psi_{\ast}}.}
\end{displaymath} 
\end{enumerate}
\end{thm}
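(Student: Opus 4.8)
The plan is to build the inclusion-induced map purely from the universal property of the inverse limit, using that a smaller domain indexes a sub-family of the system defining the homology of a larger one. Concretely, if $\mathcal{U}_1\subset\mathcal{U}_2$ then every compactly supported Hamiltonian function supported in $\mathcal{U}_1$ is also supported in $\mathcal{U}_2$, so $\text{Ham}_{a,b}^{\;\;\;\;\;c}\,(\mathcal{U}_1)\subseteq\text{Ham}_{a,b}^{\;\;\;\;\;c}\,(\mathcal{U}_2)$ as subsets of $\text{Ham}^c\,(\mathbb{R}^{2n})$, and this inclusion respects the partial order $\leq$ as well as the transition homomorphisms $\lambda_i^{\phantom{i}j}$ (which are defined intrinsically from the symplectomorphisms). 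Hence the inverse system $\{G_k^{\;\;(a,b]}\,(\phi)\}_{\phi\in\text{Ham}_{a,b}^{\;\;\;\;\;c}\,(\mathcal{U}_1)}$ is literally the restriction of the one indexed by $\text{Ham}_{a,b}^{\;\;\;\;\;c}\,(\mathcal{U}_2)$. First I would observe that an element of $G_k^{\;\;(a,b]}\,(\mathcal{U}_2)$, being a compatible family over the larger index set, restricts to a compatible family over the smaller one, and that this restriction is a homomorphism $G_{\ast}^{\;\;(a,b]}\,(\mathcal{U}_2)\longrightarrow G_{\ast}^{\;\;(a,b]}\,(\mathcal{U}_1)$; equivalently, it is the map obtained by applying the universal property of the inverse limit $G_{\ast}^{\;\;(a,b]}\,(\mathcal{U}_1)$ to the cone on its defining diagram obtained by restricting the canonical cone of $G_{\ast}^{\;\;(a,b]}\,(\mathcal{U}_2)$. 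Since it goes from the homology of the bigger domain to that of the smaller one, it ``reverses the order'' as claimed.

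Functoriality (i) is then formal: for $\mathcal{U}_1\subset\mathcal{U}_2\subset\mathcal{U}_3$ we have nested index sets $\text{Ham}_{a,b}^{\;\;\;\;\;c}\,(\mathcal{U}_1)\subseteq\text{Ham}_{a,b}^{\;\;\;\;\;c}\,(\mathcal{U}_2)\subseteq\text{Ham}_{a,b}^{\;\;\;\;\;c}\,(\mathcal{U}_3)$, and restricting a compatible family from $\mathcal{U}_3$ to $\mathcal{U}_1$ in one step agrees with restricting first to $\mathcal{U}_2$ and then to $\mathcal{U}_1$; this gives commutativity of the triangle directly.

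For (ii) I would first check that conjugation by a Hamiltonian symplectomorphism $\psi$ preserves all the structure: it sends $\text{Ham}_{a,b}^{\;\;\;\;\;c}\,(\mathcal{U}_i)$ bijectively onto $\text{Ham}_{a,b}^{\;\;\;\;\;c}\,(\psi(\mathcal{U}_i))$ (a Hamiltonian supported in $\mathcal{U}_i$ is carried to one supported in $\psi(\mathcal{U}_i)$, and the action spectrum is unchanged, so $a$ and $b$ remain regular), and it respects $\leq$ since $(\psi\phi_2\psi^{-1})(\psi\phi_1\psi^{-1})^{-1}=\psi(\phi_2\phi_1^{-1})\psi^{-1}$ is again the time-$1$ flow of a non-negative Hamiltonian. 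By Proposition \ref{conjsympl} we get isomorphisms $\psi_{\ast}: G_{\ast}^{\;\;(a,b]}\,(\psi\phi\psi^{-1})\longrightarrow G_{\ast}^{\;\;(a,b]}\,(\phi)$ commuting with the transition maps $\lambda_i^{\phantom{i}j}$, i.e. an isomorphism of inverse systems, so passing to inverse limits yields the vertical isomorphisms $\psi_{\ast}:G_{\ast}^{\;\;(a,b]}\,\big(\psi(\mathcal{U}_i)\big)\longrightarrow G_{\ast}^{\;\;(a,b]}\,(\mathcal{U}_i)$. The square in (ii) then commutes because it commutes level-by-level over the index posets (the inclusion of index sets $\text{Ham}_{a,b}^{\;\;\;\;\;c}\,(\psi(\mathcal{U}_1))\subseteq\text{Ham}_{a,b}^{\;\;\;\;\;c}\,(\psi(\mathcal{U}_2))$ corresponds under conjugation to $\text{Ham}_{a,b}^{\;\;\;\;\;c}\,(\mathcal{U}_1)\subseteq\text{Ham}_{a,b}^{\;\;\;\;\;c}\,(\mathcal{U}_2)$), and restriction of compatible families commutes with the system isomorphism $\psi_{\ast}$. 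The only genuinely geometric inputs are the conjugation-invariance of the action spectrum (ensuring $a,b$ stay regular and the order is preserved) and Proposition \ref{conjsympl}; the rest is bookkeeping with the universal property of inverse limits, which I expect to be the only place requiring any care.
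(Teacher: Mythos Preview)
Your argument is correct and rests on the same ingredients as the paper's proof (the partial order $\leq$, Proposition \ref{parordsymp} hidden in the transition maps, and Proposition \ref{conjsympl} for the compatibility with $\psi_\ast$), but you package the construction of the inclusion-induced map a bit differently. The paper picks an unbounded ordered sequence $\phi_1^{\;2}\leq\phi_2^{\;2}\leq\cdots$ in $\text{Ham}_{a,b}^{\;\;\;\;\;c}(\mathcal{U}_2)$, then an unbounded sequence $\phi_1^{\;1}\leq\phi_2^{\;1}\leq\cdots$ in $\text{Ham}_{a,b}^{\;\;\;\;\;c}(\mathcal{U}_1)$ with $\phi_i^{\;1}\leq\phi_i^{\;2}$, and uses the transition maps $G_{\ast}^{\;\;(a,b]}(\phi_i^{\;2})\to G_{\ast}^{\;\;(a,b]}(\phi_i^{\;1})$ to induce the map on the limits computed along these cofinal sequences. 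You instead observe that $\text{Ham}_{a,b}^{\;\;\;\;\;c}(\mathcal{U}_1)\subseteq\text{Ham}_{a,b}^{\;\;\;\;\;c}(\mathcal{U}_2)$ makes the smaller inverse system a literal sub-system of the larger one, so the map is just restriction of compatible families. The two descriptions agree (a compatible family over the big index set evaluated at $\phi_i^{\;1}$ is exactly the image of its value at $\phi_i^{\;2}$ under the transition map), but your formulation has the mild advantage of making the functoriality in (i) and (ii) transparent and bypassing any check that the paper's map is independent of the chosen sequences.
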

\begin{proof}
Suppose $\mathcal{U}_1\subset\mathcal{U}_2$.
Given an unbounded ordered sequence $\phi_1^{\;2}\leq\phi_2^{\;2}\leq\phi_3^{\;2}\leq \cdots$ supported in $\mathcal{U}_2$, there exists an unbounded ordered sequence $\phi_1^{\;1}\leq\phi_2^{\;1}\leq\phi_3^{\;1}\leq \cdots$ supported in $\mathcal{U}_1$ such that $\phi_i^{\;1}\leq\phi_i^{\;2}$. The homomorphisms
$G_{\ast}^{\;\;(a,b]}\,(\phi_i^{\;2})\longrightarrow G_{\ast}^{\;\;(a,b]}\,(\phi_i^{\;1})$ induce a homomorphism of the inverse limits $G_{\ast}^{\;\;(a,b]}\,(\mathcal{U}_1)\rightarrow G_{\ast}^{\;\;(a,b]}\,(\mathcal{U}_2)$. The functorial properties are easy to check.
\end{proof}

Traynor \cite{T} calculated the homology groups with $\mathbb{Z}_2$-coefficients of ellipsoids in $\mathbb{R}^{2n}$. We will need the following special case of her calculations.

\begin{thm}\label{thmballs}
Consider $B(R)\subset\mathbb{R}^{2n}$ and let $a$ be a positive real number. Then for $\ast=2nl$ we have
$$ G_{\ast}^{\;\;(a,\infty]}\,\big(B(R)\big)  = \left\{ 
\begin{array}{l l}
 \mathbb{Z}_2 & \quad \text{if}\;\; \frac{a}{l}<R\leq\frac{a}{l-1} \\
 0            & \quad \text{otherwise}
\end{array} \right. $$
where $l$ is any positive integer. In particular for $l=1$ we have
$$ G_{2n}^{\;\;(a,\infty]}\,\big(B(R)\big)  = \left\{ 
\begin{array}{l l}
 \mathbb{Z}_2 & \quad \text{if}\;\; R>a \\
 0            & \quad \text{otherwise}.
\end{array} \right. $$
For all other values of $\ast$ the corresponding homology groups are zero. Moreover, given $R_1$, $R_2$ with $\frac{a}{l}< R_2 < R_1 \leq \frac{a}{l-1}$, the homomorphism $G_{\ast}^{\;\;(a,\infty]}\,\big(B(R_1)\big)\longrightarrow G_{\ast}^{\;\;(a,\infty]}\,\big(B(R_2)\big)$ induced by the inclusion $B(R_2)\subset B(R_1)$ is an isomorphism.
\end{thm}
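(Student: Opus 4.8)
The plan is to deduce this from Traynor's computation \cite{T} of the symplectic homology of ellipsoids, specialised to $B(R)=E(R,\dots,R)$ and to $b=\infty$, and then to add the statement about inclusion maps. First I would fix a convenient cofinal family in $\mathrm{Ham}_{a,\infty}^{\,c}(B(R))$: let $\phi_j$ be the time-$1$ map of the autonomous Hamiltonian $H_j=h_j(\pi|z|^2)$, where $h_j\ge 0$ is supported in $[0,R)$, $h_j\le h_{j+1}$, $h_j'$ is strictly increasing where positive, and $\max h_j'\to\infty$ (a generic such choice keeps $a$ out of every action spectrum). These $\phi_j$ form an unbounded $\le$-increasing chain supported in $B(R)$, so by \ref{sympl_hom} we have $G_{\ast}^{\;\;(a,\infty]}(B(R))=\varprojlim_j G_{\ast}^{\;\;(a,\infty]}(\phi_j)$, and it suffices to understand each term and the connecting maps.

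Next I would read off the critical data of a generating function $S_j$ for $\Gamma_{\phi_j}$ via Lemma~\ref{crucialsympl}. The flow of $H_j$ rotates each complex line with angular speed $2\pi h_j'(\pi|z|^2)$, so the fixed points of $\phi_j$ --- hence the critical submanifolds of $S_j$ --- are the origin (critical value $0$), the point at infinity (the normalisation point, critical value $0$), and, for each integer $0<m\le\max h_j'$, the sphere $\{\pi|z|^2=\rho_m\}\cong S^{2n-1}$ with $h_j'(\rho_m)=m$. The associated critical value is the symplectic action of that resonant orbit, which one computes to lie in $(0,mR)$ and to tend to $mR$ as $j\to\infty$ (consistently with $c(B(R))=R$ from Example~\ref{cap_ell}). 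A Morse--Bott analysis of $S_j$ --- essentially the computation in \cite{T} --- then shows, with the grading conventions of \ref{sympl_hom}, that the resonance-$m$ sphere contributes $H_{\ast-2nm}(S^{2n-1};\mathbb{Z}_2)$, i.e.\ a copy of $\mathbb{Z}_2$ in degrees $2nm$ and $2n(m+1)-1$. Since $E^\infty=E$, the group $G_{\ast}^{\;\;(a,\infty]}(\phi_j)=H_{\ast+\iota}(E,E^a)$ is computed by the Morse--Bott complex generated by the resonances lying above level $a$, which for $j$ large are exactly those with $m\ge l_0:=\min\{m\in\mathbb{Z}_{>0}:mR>a\}$ (the origin and the point at infinity sit at level $0<a$ and play no role).

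The crux, and where I expect the main work to be, is the algebra of this truncated complex together with the inverse limit (this is the heart of Traynor's ellipsoid argument): the Morse--Bott differential matches the $H_{2n-1}$-generator of resonance $m$, in degree $2n(m+1)-1$, with the $H_0$-generator of resonance $m+1$, in degree $2n(m+1)$, by an isomorphism over $\mathbb{Z}_2$. Hence in homology everything cancels in consecutive pairs and only the $H_0$-generator of the lowest resonance $l_0$ survives, in degree $2nl_0$; passing to the limit over $j$ leaves this unchanged. Since $l_0$ is exactly the unique integer $l$ with $a/l<R\le a/(l-1)$ (the case $l=1$ being $R>a$), this gives all the displayed formulas, including the vanishing of $G_{\ast}^{\;\;(a,\infty]}(B(R))$ for every $\ast$ other than the relevant $2nl$.

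Finally, for the inclusion $B(R_2)\subset B(R_1)$ with $a/l<R_2<R_1\le a/(l-1)$, both homology groups are $\mathbb{Z}_2$ concentrated in degree $2nl$, so the induced homomorphism $G_{\ast}^{\;\;(a,\infty]}(B(R_1))\to G_{\ast}^{\;\;(a,\infty]}(B(R_2))$ is a map $\mathbb{Z}_2\to\mathbb{Z}_2$ and is an isomorphism as soon as it is nonzero. To see the latter I would run the two cofinal sequences compatibly, choosing the bumps so that the Hamiltonian generating $\phi_j$ for $B(R_2)$ is dominated by the one for $B(R_1)$ (so the two $\phi_j$'s are $\le$-comparable), and check that the inclusions of sublevel sets match the resonance-$l$ critical sphere of the larger map with that of the smaller one --- both have action $>a$ for $j$ large and carry the respective generators. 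This identifies the two generators, so the map is nonzero and hence an isomorphism.
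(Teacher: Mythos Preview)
The paper does not actually prove this theorem: it is stated as a special case of Traynor's ellipsoid computation and simply cited from \cite{T} (see the sentence preceding the theorem). Your outline is essentially a sketch of Traynor's argument specialised to $B(R)=E(R,\dots,R)$ --- radial autonomous Hamiltonians, the resonant spheres as Morse--Bott critical manifolds of the generating function, the index computation, and the cancellation in pairs over $\mathbb{Z}_2$ --- so you are reconstructing the cited proof rather than comparing against one in the paper.

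One point worth flagging: your last paragraph on the inclusion-induced map is the one place where you go a little beyond what is in \cite{T} as usually stated, and your argument there is a bit thin. Saying that the inclusion of sublevel sets ``matches the resonance-$l$ critical sphere of the larger map with that of the smaller one'' is the right intuition, but to make it rigorous you need to control not just the critical submanifolds but the actual map on relative homology $H_{\ast+\iota}(E,E^a)$ induced by the inclusion of sublevel sets (after stabilising so the two generating functions live on the same total space, as in Proposition~\ref{parordsymp}). A cleaner route is to interpolate: choose a one-parameter family $R_t$ from $R_1$ to $R_2$ staying in $(a/l,\,a/(l-1)]$, use a corresponding family of Hamiltonians, and observe that the resonance-$l$ action stays above $a$ throughout, so the relevant generator is never created or destroyed. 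That gives nontriviality of the map directly and avoids having to chase a specific geometric correspondence between two different critical spheres.
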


\section{Contact Capacity and Homology for Domains in $\mathbb{R}^{2n}\times S^1$}\label{contact}

We refer to \cite{Ge} for an introduction to Contact Topology, and discuss here only some basic preliminaries.\\
\\
A contact manifold is an odd dimensional manifold $V^{2n+1}$ endowed with a hyperplanes field $\xi$ which is maximally non-integrable, i.e. it is locally the kernel of a 1-form $\eta$ such that $\eta\wedge (d\eta)^n$ never vanishes. We will always assume that the contact manifold is cooriented, i.e. that $\eta$ is globally defined.
Standard examples of contact manifolds can be obtained by considering the prequantization space of an exact symplectic manifold $\big(M,\omega=-d\lambda\big)$, i.e. the manifold $M\times \mathbb{R}$ endowed with the contact structure $\xi=\text{ker}\,(dz-\lambda)$ where $z$ is the coordinate on $\mathbb{R}$. Special instances of this construction are the standard contact euclidean space $\big(\mathbb{R}^{2n+1},\xi_0=\text{ker}\,(dz-ydx)\big)$, which is the prequantization of $(\mathbb{R}^{2n}, \omega_0)$, and the 1-jet bundle $J^1B$ of a manifold $B$, which is the prequantization of $(T^{\ast}B, \omega_{\text{can}})$.\\
\\
A diffeomorphism $\phi$ of a contact manifold $\big(V,\,\xi=\text{ker}(\eta)\big)$ is called a contactomorphism if its differential preserves $\xi$ and its coorientation. It is called a strict contactomorphism if $\phi^{\ast}\eta=\eta$. A time-dependent vector field $X_t$ on $V$ is called a contact vector field if its flow consists of contactomorphisms. Given a time-dependent function $H_t$ on $V$ there exists a unique contact vector field $X_t$ such that $\eta(X_t)=H_t$ (see \cite[Section 2.3]{Ge}). The function $H_t$ is then called the contact Hamiltonian of the flow $\phi_t$ of $X_t$, with respect to the contact form $\eta$. An immersion $i: L\rightarrow \big(V,\,\xi=\text{ker}(\eta)\big)$ is called isotropic if $i^{\ast}\eta=0$ and Legendrian if moreover the dimension of $L$ is maximal, i.e. half of $(\text{dim}(M)-1)$. For example, if $V$ is the prequantization of an exact symplectic manifold $\big(M,\omega=-d\lambda\big)$ and $i:L \rightarrow M$ is an exact Lagrangian immersion with $i^{\ast}\lambda=df$, then the lift $i\times f$ is a Legendrian immersion of $L$ into $V=M\times\mathbb{R}$. Note that in particular, up to addition of a constant in the $\mathbb{R}$-coordinate, this gives a 1-1 correspondence between Legendrian submanifolds of $V$ and exact Lagrangian submanifolds of $M$.\\
\\
In the contact case, generating functions are defined for Legendrian submanifolds of $J^1B$. A Lagrangian submanifold of $T^{\ast}B$ that is Hamiltonian isotopic to the 0-section is in particular exact, and we will see that it has the same generating function as its lift to $J^1B$. This basic fact is what is behind the relation between the symplectic invariants defined in the previous section and the contact invariants that we are going to define now.

\subsection{Generating functions for Legendrian submanifolds of $J^1B$}\label{legendrian}

Consider a real function $f$ defined on a smooth manifold $B$. The 1-jet of $f$ is the Legendrian immersion $j^1f:B\rightarrow J^1B$ defined by $x\mapsto \big(x,df(x),f(x)\big)$. Note that $j^1f$ is the lift of the differential of $f$, seen as an exact Lagrangian immersion $B\rightarrow T^{\ast}B$.
More generally, given a transverse variational family $(E,S)$ over $B$ denote by $j_S: \Sigma_S \rightarrow J^1B$ the lift of the exact Lagrangian immersion $i_S:\Sigma_S\longrightarrow T^{\ast}B$ defined in \ref{lagrsubmfds}, i.e. $j_S(e)=\big(p(e),v^{\ast}(e), S(e)\big)$. Then $S:E\longrightarrow\mathbb{R}$ is called a \textbf{generating function} for the Legendrian submanifold $\widetilde{L_S}:=j_S\,(\Sigma_S)$ of $J^1B$. Note that critical points of $S$ correspond under $j_S$ with intersection points of $\widetilde{L_S}$ with the 0-wall of $J^1B$ (which is defined to be the product of the 0-section of $T^{\ast}B$ with $\mathbb{R}$), and that the corresponding critical value is the $\mathbb{R}$-coordinate of the intersection point with the 0-wall. Moreover, non-degenerate critical points correspond to transverse intersections (see \cite[Proposition 2.1]{C}). Note also that if two functions differ by an additive constant, then they generate different Legendrian submanifolds of $J^1B$ (in fact different lifts of the same Lagrangian submanifold of $T^{\ast}B$).\\
\\
The existence and uniqueness theorems for generating functions have been generalized to the contact case by Chaperon, Chekanov and Th\'{e}ret.

\begin{thm}[\cite{Ch}, \cite{C}, \cite{Th}]\label{eugfcont}
If $B$ is closed, then any Legendrian submanifold of $J^1B$ contact isotopic to the 0-section has a g.f.q.i., which is unique up to fiber-preserving diffeomorphism and stabilization. If $L\subset J^1B$ has a g.f.q.i. and $\psi_t$ is a contact isotopy of $J^1B$, then there exists a continuous family of g.f.q.i. $S_t:E\longrightarrow\mathbb{R}$ such that each $S_t$ generates the corresponding $\psi_t(L)$.
\end{thm}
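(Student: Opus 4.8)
The plan is to adapt to the contact setting the broken-trajectory construction underlying the symplectic existence theorem (Theorem~\ref{existence}), rather than to deduce the statement from it. The naive reduction --- project a Legendrian $\Lambda\subset J^1B$ to a Lagrangian in $T^{\ast}B$ and apply Theorem~\ref{existence} --- works only for the Legendrians that are lifts of Lagrangians Hamiltonian isotopic to the $0$-section: in general a Reeb chord of $\Lambda$ (a pair of points of $\Lambda$ with the same $T^{\ast}B$-coordinate) projects to a self-intersection, so $\pi(\Lambda)$ is only an immersed Lagrangian, not a submanifold.

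The first ingredient is a generating-function description of contactomorphisms of $J^1B$ close to the identity, as carried out by Bhupal (see \ref{contactomorphisms}): such a $\phi$ is recovered by differentiation from a function $G$ on $B\times B\times\mathbb{R}$, $C^1$-close to the one describing the identity, with the $\mathbb{R}$-variable recording the $z$-coordinate, so that the defining relations are \emph{implicit} in $z$ --- a feature with no symplectic analogue. The second, and main, ingredient is a composition formula: if $L\subset J^1B$ has a g.f.q.i.\ $S\colon E\to\mathbb{R}$ with $E\to B$ a vector bundle, and $\phi$ is $C^1$-close to the identity with generating function $G$, then $\phi(L)$ has a g.f.q.i.\ $S'$, defined on a bundle obtained from $E$ by adjoining fibre variables and arranged to be quadratic at infinity, and built from $S$ and $G$ together with an auxiliary non-degenerate quadratic form. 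Checking that $S'$ generates $\phi(L)$ is a Lagrange-multiplier computation in the spirit of Proposition~\ref{horm}, in which --- unlike the symplectic case --- the $\mathbb{R}$-coordinate must be followed exactly, since it cannot be normalised by an additive constant.

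Granting these, the first assertion follows by iteration: given a contact isotopy $\psi_t$ with $\psi_0=\mathrm{id}$, choose a subdivision $0=t_0<\dots<t_N=1$ fine enough that each $\phi_i:=\psi_{t_i}\psi_{t_{i-1}}^{-1}$ is $C^1$-close to the identity, write $\psi_1=\phi_N\circ\dots\circ\phi_1$, and apply the composition formula $N$ times, starting from the trivial g.f.q.i.\ of the $0$-section. Beginning instead from a given g.f.q.i.\ of a Legendrian $L$ and applying the same iteration to $\psi_t$ produces a g.f.q.i.\ for each $\psi_t(L)$; performing the construction with the subdivision fixed locally in $t$ yields the asserted continuous family $S_t$, the only delicate point being the crossing of a subdivision value, where a composition factor is absorbed or released --- this is harmless, since the relevant $\phi_i$ is then $C^1$-close to the identity and its generating function degenerates continuously to a stabilisation of the trivial one. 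Uniqueness up to fibre-preserving diffeomorphism and stabilisation is then established as in the symplectic uniqueness theorem (Theorem~\ref{uniqueness}), following Th\'eret; no additive constant intervenes, because two g.f.q.i.\ of the same Legendrian record the same $z$-values.

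The hard part is the composition formula together with the accompanying control at infinity: one must organise the auxiliary fibre variables so that $S'$ is genuinely quadratic at infinity --- each new variable entering through a non-degenerate quadratic form with all cross terms bounded --- so that the Morse-theoretic arguments used later in the paper apply, while simultaneously handling the implicit $z$-dependence of $G$, which makes both the formula and the proof that iterating it terminates in a g.f.q.i.\ substantially more involved than in the symplectic case. This is precisely what Chaperon, Chekanov and Th\'eret worked out, and it is their construction that I would follow.
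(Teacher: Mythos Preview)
The paper does not prove Theorem~\ref{eugfcont}; it is stated as a background result with attribution to Chaperon~\cite{Ch}, Chekanov~\cite{C} and Th\'eret~\cite{Th}, and no argument is given in the text. Your sketch is therefore not to be compared against a proof in the paper, but against the cited literature, and as such it is a reasonable outline of the Chaperon--Chekanov broken-trajectory construction together with Th\'eret's uniqueness argument.

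One correction: the ``generating-function description of contactomorphisms of $J^1B$ close to the identity'' that feeds the composition formula is not Bhupal's construction in~\ref{contactomorphisms} (which associates to a contactomorphism of $\mathbb{R}^{2n+1}$ a Legendrian in $J^1\mathbb{R}^{2n+1}$), nor is it a function on $B\times B\times\mathbb{R}$. The relevant object is Chaperon's \emph{Greek generating function}, a function $\Phi$ on $\mathbb{R}^m\times(\mathbb{R}^m)^\ast\times\mathbb{R}$ described in the paper just before Lemma~\ref{lui}; the composition formula you want is exactly Lemma~\ref{lui}. With that substitution your outline --- subdivision of the isotopy, iteration of the composition formula starting from the trivial g.f.q.i.\ of the $0$-section, continuity of the resulting family, and uniqueness \`a la Th\'eret with no additive constant --- matches the structure of the cited proofs.
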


As in the symplectic case, any g.f.q.i. is equivalent to a special one. We will always assume generating functions to be special whenever this is needed.

\subsection{Generating functions for contactomorphisms of $\mathbb{R}^{2n+1}$}\label{contactomorphisms}

In order to apply the results of the previous section to contactomorphisms of $\mathbb{R}^{2n+1}$ we need to associate to a contactomorphism of $\mathbb{R}^{2n+1}$ a Legendrian submanifold in some 1-jet bundle. Moreover, we should do this in a way which is compatible with the construction given in the symplectic case. By this we mean the following. Recall that any Hamiltonian symplectomorphism $\varphi$ of $\mathbb{R}^{2n}$ can be lifted to a contactomorphism $\widetilde{\varphi}$ of $\mathbb{R}^{2n+1}$. To get a simple relation between the contact invariants that we will define in this section and the symplectic ones defined before, we need the generating function of $\widetilde{\varphi}$ to be essentially the same as the generating function of $\varphi$. We now explain how this can be done, following Bhupal \cite{B}. Let $\phi$ be a contactomorphism of $\mathbb{R}^{2n+1}$, with $\phi^{\ast}(dz-ydx)=e^g(dz-ydx)$ for some function $g:\mathbb{R}^{2n+1}\longrightarrow\mathbb{R}$. Consider the graph of $\phi$, i.e. the embedding
$$
\text{gr}_{\phi}:\mathbb{R}^{2n+1}\longrightarrow \mathbb{R}^{2(2n+1)+1}\;,\;\; q\mapsto (q,\phi(q),g(q)).
$$
If we endow $\mathbb{R}^{2(2n+1)+1}$ with the contact structure given by the kernel of $e^{\theta}\,(dz-ydx)-(dZ-YdX)$, then $\text{gr}_{\phi}$ becomes a Legendrian embedding. Define now $\Gamma_{\phi}:\mathbb{R}^{2n+1}\longrightarrow J^1\mathbb{R}^{2n+1}$
to be the composition $\Gamma_{\phi}=\tau\circ\text{gr}_{\phi}$, where $\tau:\mathbb{R}^{2(2n+1)+1}\longrightarrow J^1\mathbb{R}^{2n+1}$ is the contact embedding defined by $$(x,y,z,X,Y,Z,\theta)\mapsto \big(x,Y,z, Y-e^{\theta}y, x-X, e^{\theta}-1, xY-XY+Z-z\big).$$
Thus
\begin{equation}\label{cazzuta}
\Gamma_{\phi}(x,y,z)=\big(x,\phi_2,z,\phi_2-e^gy, x-\phi_1, e^g-1, x\phi_2-\phi_1\phi_2+\phi_3-z\big).
\end{equation}
To motivate this formula, consider the case of the lift $\widetilde{\varphi}$ of a Hamiltonian symplectomorphism $\varphi$ of $\mathbb{R}^{2n}$. Recall that $\widetilde{\varphi}$ is defined by $\widetilde{\varphi}(x,y,z)=\big(\varphi_1(x,y),\varphi_2(x,y),z+F(x,y)\big)$ where $F$ is the compactly supported function satisfying $\varphi^{\ast}\lambda_0-\lambda_0=dF$. In \ref{symplectomorphisms} we associated to $\varphi$ the Lagrangian embedding $\Gamma_{\varphi}:\mathbb{R}^{2n}\longrightarrow T^{\ast}\mathbb{R}^{2n}\;,\;(x,y)\mapsto\big(x,\varphi_2,\varphi_2-y,x-\varphi_1\big)$. This embedding is exact with $\Gamma_{\varphi}^{\phantom{\varphi}\ast}\lambda_{\text{can}}=d(x\varphi_2-\varphi_1\varphi_2+F)$, thus it can be lifted to the Legendrian embedding $\widetilde{\Gamma_{\varphi}}:\mathbb{R}^{2n}\longrightarrow J^1\mathbb{R}^{2n}\;,\;(x,y)\mapsto\big(x,\varphi_2,\varphi_2-y,x-\varphi_1,x\varphi_2-\varphi_1\varphi_2+F\big)$. Identify now $J^1\mathbb{R}^{2n+1}$ with $J^1\mathbb{R}^{2n}\times T^{\ast}\mathbb{R}$ via $(x,y,z,X,Y,Z,\theta)\mapsto \big((x,y,X,Y,\theta)\,,\,(z,Z)\big)$ and consider the Legendrian embedding $\widetilde{\Gamma_{\varphi}}\times\mbox{$0$-section}$,
$\mathbb{R}^{2n+1}\longrightarrow J^1\mathbb{R}^{2n+1}:
(x,y,z)\mapsto\big(x,\varphi_2,z,\varphi_2-y,x-\varphi_1,0,x\varphi_2-\varphi_1\varphi_2+F\big)$.
Note that, since $\varphi$ is a strict contactomorphism, $\widetilde{\Gamma_{\varphi}}\times\mbox{$0$-section}$ coincides with the Legendrian embedding $\Gamma_{\widetilde{\varphi}}:\mathbb{R}^{2n+1}\longrightarrow J^1\mathbb{R}^{2n+1}$ given by (\ref{cazzuta}). Besides shedding some light to the formula (\ref{cazzuta}) the above discussion proves the following lemma.

\begin{lemma}\label{lift}
If $\varphi$ is a compactly supported Hamiltonian symplectomorphism of $\mathbb{R}^{2n}$ with generating function $S: \mathbb{R}^{2n}\times\mathbb{R}^N\longrightarrow\mathbb{R}$, then the function $\widetilde{S}: \mathbb{R}^{2n+1}\times\mathbb{R}^N\longrightarrow\mathbb{R}$ defined by $\widetilde{S}(x,y,z;\xi)=S(x,y;\xi)$ is a generating function for the lift $\widetilde{\varphi}$.
\end{lemma}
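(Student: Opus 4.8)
The plan is to verify directly that the Legendrian submanifold $\widetilde{L}_{\widetilde S}$ generated by $\widetilde S$ coincides with $\Gamma_{\widetilde\varphi}$, where the latter is given by formula (\ref{cazzuta}) applied to $\phi=\widetilde\varphi$. Since the discussion preceding the lemma already identifies $\Gamma_{\widetilde\varphi}$ with $\widetilde{\Gamma_\varphi}\times\text{$0$-section}$ under the splitting $J^1\mathbb{R}^{2n+1}\cong J^1\mathbb{R}^{2n}\times T^{\ast}\mathbb{R}$, it suffices to compute the Legendrian submanifold generated by $\widetilde S(x,y,z;\xi)=S(x,y;\xi)$ and check it equals $\widetilde{\Gamma_\varphi}\times\text{$0$-section}$.

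First I would compute the fiber-critical set. Since $\widetilde S$ does not depend on $z$, we have $\partial_\xi \widetilde S(x,y,z;\xi)=\partial_\xi S(x,y;\xi)$, so $\Sigma_{\widetilde S}=\Sigma_S\times\mathbb{R}_z$, i.e. a point $(x,y,z;\xi)$ is fiber-critical for $\widetilde S$ over $B=\mathbb{R}^{2n+1}$ exactly when $(x,y;\xi)$ is fiber-critical for $S$ over $\mathbb{R}^{2n}$. Transversality of the variational family is inherited as well, so $\widetilde S$ is a genuine (quadratic-at-infinity) generating function. Next I would compute the Lagrange multipliers, i.e. the image under $j_{\widetilde S}$. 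For the base directions $x,y$ the multipliers agree with those of $S$; for the new base direction $z$, since $\partial_z\widetilde S\equiv 0$, the corresponding cotangent coordinate vanishes. Thus $j_{\widetilde S}(x,y,z;\xi)=\big(x,\varphi_2(x,y),\,\text{[$x$-fiber comp.]},\,0,\,\widetilde S(x,y,z;\xi)\big)$ in the coordinates of $J^1\mathbb{R}^{2n}\times T^{\ast}\mathbb{R}$, where the $J^1\mathbb{R}^{2n}$-part is precisely $j_S(x,y;\xi)$ and the $T^{\ast}\mathbb{R}$-part is $(z,0)$, i.e. the $0$-section. Finally, the $\mathbb{R}$-coordinate (the generating value) of $\widetilde S$ at a fiber-critical point equals that of $S$, which by Lemma \ref{crucialsympl} and the construction in \ref{symplectomorphisms} is $x\varphi_2-\varphi_1\varphi_2+F$. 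Comparing with formula (\ref{cazzuta}) for $\phi=\widetilde\varphi$ (note $g\equiv 0$ since $\widetilde\varphi$ is a strict contactomorphism, so $e^g-1=0$ and $\phi_3-z=F$) shows the two Legendrian submanifolds coincide.

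The only step requiring genuine care is bookkeeping the identification $J^1\mathbb{R}^{2n+1}\cong J^1\mathbb{R}^{2n}\times T^{\ast}\mathbb{R}$ and the Lagrange-multiplier formula $v^\ast(e)(X)=dS(\widehat X)$ in the new $z$-direction: one must check that the lift $j_{\widetilde S}$ of $i_{\widetilde S}$ really does place the $(z,Z)$-coordinates on the $0$-section rather than introducing a spurious $Z$-component. This is where the hypothesis that $\widetilde S$ is independent of $z$ (equivalently, that $\varphi$ is a strict contactomorphism so no conformal factor appears) is used in an essential way, and it is the point I would write out most explicitly. Everything else is a direct substitution into the already-established formulas of \ref{symplectomorphisms} and \ref{contactomorphisms}.
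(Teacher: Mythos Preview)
Your proposal is correct and follows essentially the same line as the paper: the discussion immediately preceding the lemma already establishes $\Gamma_{\widetilde\varphi}=\widetilde{\Gamma_\varphi}\times(\text{0-section})$ under $J^1\mathbb{R}^{2n+1}\cong J^1\mathbb{R}^{2n}\times T^\ast\mathbb{R}$, and you simply make explicit the remaining (routine) observation that $\widetilde S(x,y,z;\xi)=S(x,y;\xi)$ generates this product Legendrian because $\Sigma_{\widetilde S}=\Sigma_S\times\mathbb{R}_z$ and $\partial_z\widetilde S\equiv 0$. One cosmetic point: in your displayed formula for $j_{\widetilde S}$ the base coordinates of the generating-function domain are already the base coordinates of $J^1\mathbb{R}^{2n}$, so writing ``$(x,\varphi_2(x,y),\ldots)$'' conflates two parametrizations---but this does not affect the argument.
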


Similarly to the symplectic case, for a contactomorphisms $\phi$ of $\mathbb{R}^{2n+1}$ we can write $\Gamma_{\phi}$ also as
$\Gamma_{\phi}=\Psi_{\phi}(\text{0-section})$, with $\Psi_{\phi}$ denoting the local contactomorphism of $J^1\mathbb{R}^{2n+1}$ defined by the diagram
\begin{equation}\label{diagram_c}
\xymatrix{
 \quad \mathbb{R}^{2(2n+1)+1} \quad \ar[r]^{\overline{\phi}} \ar[d]_{\tau} &
 \quad \mathbb{R}^{2(2n+1)+1} \quad \ar[d]^{\tau} \\
 \quad J^1\mathbb{R}^{2n+1} \quad \ar[r]_{\Psi_{\phi}} &  \quad J^1\mathbb{R}^{2n+1}}\quad
\end{equation}
where $\overline{\phi}$ is the contactomorphism $(p,P,\theta)\mapsto (p,\phi(P), g(P)+\theta)$. This shows in particular that if $\phi$ is contact isotopic to the identity then $\Gamma_{\phi}$ is contact isotopic to the 0-section. Suppose indeed that $\phi$ is the time-1 map of a contact isotopy $\phi_t$. Then we get a local contact isotopy $\Psi_{\phi_t}$ of $J^1\mathbb{R}^{2n+1}$ connecting $\Psi_{\phi}$ to the identity. By the contact isotopy extension theorem (see \cite[Section 2.6]{Ge}) we can extend this local isotopy to a global one, so we see that $\Gamma_{\phi}$ is contact isotopic to the 0-section. Notice that, as in the symplectic case, diagram (\ref{diagram_c}) behaves well with respect to composition: for all contactomorphisms $\phi$, $\phi_1$ and $\phi_2$ we have namely that
$\Psi_{\phi_1}\circ\Psi_{\phi_2}=\Psi_{\phi_1\phi_2} $ (in particular $\Gamma_{\phi_1\,\circ\,\phi_2}=\Psi_{\phi_1}\,(\Gamma_{\phi_2})$) and $\Psi_{\phi}^{\;-1}=\Psi_{\phi^{-1}}$.\\
\\
If $\phi$ is compactly supported then the Legendrian embedding $\Gamma_{\phi}:\mathbb{R}^{2n+1}\longrightarrow J^1  \mathbb{R}^{2n+1}$ coincides with the 0-section outside a compact set, so it can be seen as a Legendrian submanifold of $J^1S^{2n+1}$, which is contact isotopic to the 0-section if $\phi$ is contact isotopic to the identity. By Theorem \ref{eugfcont}, it follows that $\Gamma_{\phi}$ has a generating function, which is unique up to fiber-preserving diffeomorphism and stabilization. The same is true if $\phi$ is a contactomorphism of $\mathbb{R}^{2n+1}$ which is 1-periodic in the $z$-coordinate and compactly supported in the $(x,y)$-plane, because then $\Gamma_{\phi}$ can be seen as a Legendrian submanifold of $J^1(S^{2n}\times S^1)$. We will denote by $\text{Cont}_0^{\phantom{0}c}\,(\mathbb{R}^{2n+1})$ the group of compactly supported contactomorphisms of $\mathbb{R}^{2n+1}$ that are isotopic to the identity, and by $\text{Cont}^{\; \; c}_{\text{1-per}} (\mathbb{R}^{2n+1})$ the group of contactomorphisms of $\mathbb{R}^{2n+1}$ that are 1-periodic in the $z$-coordinate, compactly supported in the $(x,y)$-plane and isotopic to the identity through contactomorphisms of this form. Note that $\text{Cont}^{\; \; c}_{\text{1-per}} (\mathbb{R}^{2n+1})$ can be identified with the group $\text{Cont}_0^{\phantom{0}c}\,(\mathbb{R}^{2n}\times S^1)$ of compactly supported contactomorphisms of $\mathbb{R}^{2n}\times S^1$ isotopic to the identity.\\
\\
Recall that in the symplectic case the set of critical values of a generating function coincides with the action spectrum of the generated Hamiltonian symplectomorphism. Before stating the contact analogue of this crucial result we need to introduce the following terminology. Given a contactomorphism $\phi$ of $\mathbb{R}^{2n+1}$ with $\phi^{\ast}(dz-ydx)=e^g(dz-ydx)$, we say that $q=(x,y,z)$ is a \textbf{translated point} for $\phi$ if $\phi_1(q)=x$, $\phi_2(q)=y$ and $g(q)=0$. In analogy to the symplectic case we will call $\phi_3(q)-z$ the \textbf{contact action} of $\phi$ at the translated point $q$. 

\begin{lemma}\label{crucialcont}
Let $\phi$ be a contactomorphism of $\mathbb{R}^{2n+1}$ with generating function $S$. Then a point $q=(x,y,z)$ of $\mathbb{R}^{2n+1}$ is a translated point of $\phi$ if and only if $\big(q,0,\phi_3(q)-z\big)\in\Gamma_{\phi}$, and so if and only if $i_S^{\;-1}\big(q,0,\phi_3(q)-z\big)$ is a critical point of $S$. In this case the corresponding critical value is the contact action $\phi_3(q)-z$.
\end{lemma}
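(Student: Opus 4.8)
The plan is to read off both assertions directly from the explicit formula (\ref{cazzuta}) for $\Gamma_\phi$, together with the general correspondence between critical points of a generating function and intersections of the generated Legendrian with the $0$-wall recalled in \ref{legendrian}. This is the exact contact counterpart of Lemma \ref{crucialsympl}, and, as there, the argument should be short.

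First I would observe that a point $\Gamma_\phi(x,y,z)$ lies on the $0$-wall of $J^1\mathbb{R}^{2n+1}$ precisely when its three cotangent-fiber components $\phi_2-e^g y$, $x-\phi_1$ and $e^g-1$ all vanish at $q=(x,y,z)$. The vanishing of $e^g-1$ forces $g(q)=0$; the first component then reduces to $\phi_2(q)=y$, and the second gives $\phi_1(q)=x$. These three conditions are exactly the definition of a translated point of $\phi$. Conversely, if $q$ is a translated point then these identities hold, the base point of $\Gamma_\phi(q)$ is $\big(x,\phi_2(q),z\big)=(x,y,z)=q$, and the $\mathbb{R}$-coordinate $x\phi_2-\phi_1\phi_2+\phi_3-z$ of $\Gamma_\phi(q)$ collapses to $\phi_3(q)-z$; running the same computation backwards, and using that $\Gamma_\phi$ is an embedding, shows that $\big(q,0,\phi_3(q)-z\big)$ is the only point of $\Gamma_\phi$ on the $0$-wall with base $q$. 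Hence $q$ is a translated point if and only if $\big(q,0,\phi_3(q)-z\big)\in\Gamma_\phi$.

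For the remaining equivalence and the identification of the critical value, I would invoke the discussion in \ref{legendrian}: if $S$ is a generating function for $\Gamma_\phi=\widetilde{L_S}$, then critical points of $S$ correspond bijectively, under $j_S$, to the intersection points of $\Gamma_\phi$ with the $0$-wall, and the critical value of $S$ at such a point equals the $\mathbb{R}$-coordinate of the corresponding intersection point. Applied to the point $\big(q,0,\phi_3(q)-z\big)$ found above, this says precisely that $i_S^{\,-1}\big(q,0,\phi_3(q)-z\big)$ is a critical point of $S$ — and conversely that every critical point of $S$ arises from some such translated point — with critical value $\phi_3(q)-z$, which by definition is the contact action of $\phi$ at $q$.

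I do not expect a genuine obstacle here: both statements are forced by the normal form (\ref{cazzuta}) and the standard generating-function formalism for Legendrian submanifolds of $J^1B$. The only point deserving a word of care is the standing hypothesis that $\phi$ admits a generating function at all, i.e. that $\phi$ belongs to $\text{Cont}_0^{\,c}(\mathbb{R}^{2n+1})$ or to $\text{Cont}^{\,c}_{\text{1-per}}(\mathbb{R}^{2n+1})$, so that $\Gamma_\phi$ compactifies to a Legendrian submanifold of $J^1S^{2n+1}$ (respectively $J^1(S^{2n}\times S^1)$) contact isotopic to the $0$-section, as was arranged earlier in this section.
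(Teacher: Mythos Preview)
Your proposal is correct and follows essentially the same approach as the paper: both read off the equivalence directly from the explicit formula (\ref{cazzuta}) for $\Gamma_\phi$ and then invoke the correspondence from \S\ref{legendrian} between critical points of $S$ and intersections of $\Gamma_\phi$ with the $0$-wall. Your write-up is simply more explicit about the intermediate computations (in particular the vanishing of each fiber component and the recovery of $q_0=q$ in the converse direction), whereas the paper condenses this to a single sentence.
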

\begin{proof}
If $q$ is a translated point then $\big(q,0,\phi_3(q)-z\big)=\Gamma_{\phi}(q)\in\Gamma_{\phi}$. Conversely, it is easy to see that if $\big(q,0,\phi_3(q)-z\big)=\Gamma_{\phi}(q_0)$ for some $q_0\in\mathbb{R}^{2n+1}$ then $q_0=q$ and $q$ is a translated point. Recall then from \ref{legendrian} that intersections of $\Gamma_{\phi}$ with the 0-wall correspond to critical points of the generating function $S$, with critical value given by the last coordinate.
\end{proof}

\noindent
Consider for example the lift $\widetilde{\varphi}$ of a Hamiltonian symplectomorphism $\varphi$ of $\mathbb{R}^{2n}$. Recall that 
$\widetilde{\varphi}$ is defined by $\widetilde{\varphi}(x,y,z)=\big(\varphi_1(x,y),\varphi_2(x,y),z+F(x,y)\big)$. A point $(x,y,z)$ of $\mathbb{R}^{2n+1}$ is a translated point for $\widetilde{\varphi}$ if and only if $(x,y)$ is a fixed point of $\varphi$, and the contact action is given by $F(x,y)=\mathcal{A}_{\varphi}(x,y)$. Note that this, together with Lemma \ref{hamis}, gives an alternative proof of the fact that the set of critical values of the generating function of $\varphi$ coincides with the action spectrum of $\varphi$.\\
\\
Similarly to the symplectic case we can define a relation $\leq$ on the groups $\text{Cont}_0^{\phantom{0}c}\,(\mathbb{R}^{2n+1})$ and $\text{Cont}^{\; \; c}_{\text{1-per}} (\mathbb{R}^{2n+1})$ by setting $\phi_1\leq\phi_2$ if $\phi_2\phi_1^{-1}$ is the time-1 flow of some non-negative contact Hamiltonian. We will see in \ref{Bhupalorder} that this relation is in fact a partial order. In the rest of this section we will show that the analogue of Proposition \ref{parordsymp} is still true in the contact case. We will only consider compactly supported contactomorphisms, but all arguments go through for elements of $\text{Cont}^{\; \; c}_{\text{1-per}} (\mathbb{R}^{2n+1})$ as well.

\begin{prop}\label{parordcont}
Let $\phi_0$, $\phi_1$ be either in $\text{Cont}_0^{\phantom{0}c}\,(\mathbb{R}^{2n+1})$ or in $\text{Cont}^{\; \; c}_{\text{1-per}} (\mathbb{R}^{2n+1})$. If $\phi_0\leq\phi_1$, then there are generating functions $S_0$, $S_1: E \longrightarrow \mathbb{R}$ for $\Gamma_{\phi_0}$, $\Gamma_{\phi_1}$ respectively such that $S_0\leq S_1$.
\end{prop}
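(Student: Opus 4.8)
The plan is to follow the strategy of the symplectic statement \cite[5.3]{T}: to exhibit $S_0$ and $S_1$ as the two ends of a single continuous family of generating functions $S_t\colon E\to\mathbb R$ whose $t$-derivative is everywhere non-negative.

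First unwind the hypothesis. By definition, $\phi_0\le\phi_1$ means that $\psi:=\phi_1\phi_0^{-1}$ is the time-$1$ map of a contact isotopy $\psi_t$ whose contact Hamiltonian $H_t$ satisfies $H_t\ge 0$ everywhere (and is compactly supported, resp.\ compactly supported in the $(x,y)$-plane and $1$-periodic in $z$). Put $\phi_t:=\psi_t\circ\phi_0$, a contact isotopy from $\phi_0$ to $\phi_1$ inside $\text{Cont}_0^{\phantom{0}c}\,(\mathbb{R}^{2n+1})$ (resp.\ $\text{Cont}^{\;\;c}_{\text{1-per}}\,(\mathbb{R}^{2n+1})$). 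By the composition property of diagram (\ref{diagram_c}) we have $\Gamma_{\phi_t}=\Psi_{\psi_t}(\Gamma_{\phi_0})$; extending the local contact isotopy $\Psi_{\psi_t}$ to a global one (contact isotopy extension theorem) and passing to the one-point compactification, $\Gamma_{\phi_t}$ is a Legendrian isotopy of $J^1S^{2n+1}$ (resp.\ $J^1(S^{2n}\times S^1)$) starting at $\Gamma_{\phi_0}$. Fixing a g.f.q.i.\ $S_0\colon E\to\mathbb R$ for $\Gamma_{\phi_0}$, Theorem \ref{eugfcont} produces a continuous family of g.f.q.i.\ $S_t\colon E\to\mathbb R$, all on the same $E$, with each $S_t$ generating $\Gamma_{\phi_t}$.

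The heart of the matter is to arrange that $t\mapsto S_t$ is non-decreasing, since then $S_0\le S_1$ is the desired conclusion. For this I would not take an arbitrary family as provided by Theorem \ref{eugfcont}, but the explicit one coming from the broken-trajectory construction of Chaperon, Chekanov and Th\'eret. Choosing a fine subdivision $0=t_0<\cdots<t_k=1$ so that each $\psi_s\psi_{t_{i-1}}^{-1}$ ($t_{i-1}\le s\le t_i$) is $\mathcal{C}^1$-close to the identity, the Legendrian $\Gamma_{\psi_s\psi_{t_{i-1}}^{-1}}\subset J^1\mathbb R^{2n+1}$ is $\mathcal{C}^1$-close to the $0$-section, hence is the $1$-jet of a genuine function $g_s^i$; one then builds $S_t$ from $S_0$ and the $g_s^i$ (for $s$ up to $t$) by the standard composition formula, whose gluing terms are dictated by the contact structure alone and are independent of $H$. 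Consequently, on the fixed domain $E$, $\partial_t S_t$ is at each time the $s$-derivative of the generating function $g_s^i$ of the small-time piece active at time $t$, evaluated at a point of $\mathbb R^{2n+1}$; and a computation with formula (\ref{cazzuta}) --- keeping track of the conformal factors and of the front coordinate (the $\mathbb R$-coordinate) recorded by the generating value --- identifies this $s$-derivative as a contact Hamilton--Jacobi expression equal to $H_s$ evaluated at the corresponding point. Since $H_s\ge 0$ everywhere, $\partial_t S_t\ge 0$ pointwise on $E$, whence $S_0\le S_1$. One may equivalently first use the composition formula to reduce to $\phi_0=\mathrm{id}$, where the statement asserts that the time-$1$ flow of a non-negative contact Hamiltonian admits a g.f.q.i.\ dominating the one for the identity; the reduction works because the composition $S_0\mathbin{\#}W$ degenerates to the stabilisation $S_0\oplus Q$ when $\psi_t\equiv\mathrm{id}$.

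The main obstacle is precisely this monotonicity step: Theorem \ref{eugfcont} by itself only delivers \emph{some} continuous family $S_t$, and a generic such family need not be monotone, so the argument must exploit both the freedom in the choice of family and the sign $H_t\ge 0$ --- which forces one into the explicit construction and a careful bookkeeping of conformal factors and signs via (\ref{cazzuta}). Finally, applying Proposition \ref{parordcont} to the lifts $\widetilde{\phi_0},\widetilde{\phi_1}$ and invoking Lemma \ref{lift} recovers the symplectic Proposition \ref{parordsymp} as a special case, as announced there.
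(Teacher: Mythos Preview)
Your overall strategy matches the paper's: build an explicit one-parameter family $S_t$ generating $\Gamma_{\psi_t\phi_0}$ and show $\partial_t S_t\ge 0$ from $H_t\ge 0$, subdividing so that each piece is $\mathcal C^1$-small. Where you diverge from the paper is in the object you attach to the small piece $\psi_s\psi_{t_{i-1}}^{-1}$ and in the composition step.

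You take the Legendrian $\Gamma_{\psi_s\psi_{t_{i-1}}^{-1}}\subset J^1\mathbb R^{2n+1}$ and record its graph function $g_s^i$, then compose $S_0$ with the $g_s^i$ by an unspecified ``standard composition formula.'' The paper instead passes to the \emph{contactomorphism} $\Psi_{\psi_t}$ of $J^1\mathbb R^{2n+1}$ (one level up), takes its \emph{Greek generating function} $\Phi_t$ in the sense of Chaperon, and uses the explicit composition of Lemma~\ref{lui},
\[
S_t(Q;p,q,\xi)=\Phi_t\big(Q,p,S_0(q;\xi)-pq\big),
\]
together with Lemma~\ref{lui2}, which identifies $\partial_t\Phi_t$ pointwise with the contact Hamiltonian of $\Psi_{\psi_t}$; since $\Psi_{\psi_t}$ inherits a non-negative Hamiltonian from $\psi_t$, $\partial_t S_t\ge 0$ follows immediately. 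The stabilisation at each gluing step is handled by Lemma~\ref{lui3}. This packaging via Greek generating functions is precisely what absorbs the conformal factors you allude to, and it replaces your appeal to ``a computation with formula~(\ref{cazzuta})'' by a clean citation to Chaperon.

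So there is no strategic gap, but the two technical assertions you leave as sketches---the existence and form of the contact composition formula, and the identification of $\partial_t S_t$ with the Hamiltonian---are exactly the content of Lemmas~\ref{lui} and~\ref{lui2}; the paper's choice of Greek generating functions for $\Psi_{\psi_t}$ (rather than graph functions for $\Gamma_{\psi_s\psi_{t_{i-1}}^{-1}}$) is what makes both steps transparent.
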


\noindent
Note that, by considering the lift of Hamiltonian symplectomorphisms of $\mathbb{R}^{2n}$, this result contains Proposition \ref{parordsymp} as a special case. To prove Proposition \ref{parordcont} we will use the concept of \textit{Greek generating functions} for contactomorphisms of $J^1\mathbb{R}^m$, which was introduced by Chaperon in \cite{Ch}.\\
\\
Let $\varphi$ be a contactomorphism of $J^1\mathbb{R}^m$, and assume it is $\mathcal{C}^1$-close to the identity \footnote{\hspace{1mm}Chaperon showed in fact how to construct a Greek generating function
$\Phi:J^1\mathbb{R}^m\times\mathbb{R}^N\rightarrow\mathbb{R}$ for any compactly supported contactomorphism of $J^1\mathbb{R}^m$ contact isotopic to the identity, in such a way that the corresponding \textit{Latin generating function} is quadratic at infinity. However we will only need the construction of Greek generating functions for $\mathcal{C}^1$-small contactomorphisms.}. Then the \textbf{Greek generating function} of $\varphi$ is a function  $\Phi:\mathbb{R}^m\times(\mathbb{R}^m)^{\ast}\times\mathbb{R}\rightarrow\mathbb{R}$ defined as follows. For $(p,z)\in(\mathbb{R}^m)^{\ast}\times\mathbb{R}$, consider the function $A_{p,z}:\mathbb{R}^m\rightarrow\mathbb{R}$ given by $A_{p,z}(q)=z+pq$. Note that $j^1A_{p,z}:\mathbb{R}^m\rightarrow J^1\mathbb{R}^m$, for $(p,z)$ varying in 
$(\mathbb{R}^m)^{\ast}\times\mathbb{R}$, form a foliation of $J^1\mathbb{R}^m$. Since $\varphi$ is $\mathcal{C}^1$-close to the identity, $\varphi\,(j^1A_{p,z})$ is still a section of $J^1\mathbb{R}^m$ and thus it is the 1-jet of a function $\Phi_{p,z}:\mathbb{R}^m\rightarrow\mathbb{R}$. The Greek generating function $\Phi$ is then defined by $\Phi(Q,p,z)=\Phi_{p,z}(Q)$. The \textit{Latin generating function} of $\varphi$ is the function $F:\mathbb{R}^m\times(\mathbb{R}^m)^{\ast}\times\mathbb{R}\rightarrow\mathbb{R}$ defined by $F(Q,p,z):=\Phi(Q,p,z)-(z+pQ)$. Note that $F$ is identically $0$ if (and only if) $\varphi$ is the identity. Moreover one can show that $F$ is independent of $z$ if and only if $\varphi$ is the lift of an Hamiltonian symplectomorphism of $T^{\ast}\mathbb{R}^m$, and that in this case it coincides with the function constructed by Traynor in \cite[4.4]{T} (but we are not going to need this fact in the following).\\
\\
For the proof of Proposition \ref{parordcont} we will need the following three lemmas.

\begin{lemma}\label{lui}
Consider a Legendrian submanifold $L$ of $J^1\mathbb{R}^m$ with generating function $S:\mathbb{R}^m\times\mathbb{R}^N\rightarrow\mathbb{R}$, and a compactly supported contact isotopy $\varphi_t$ of $J^1\mathbb{R}^m$ which is $\mathcal{C}^1$-close to the identity and has Greek generating function
$\Phi_t:\mathbb{R}^m\times(\mathbb{R}^m)^{\ast}\times\mathbb{R}\rightarrow\mathbb{R}$. Then the function
$S_t:\mathbb{R}^m\times\big((\mathbb{R}^m)^{\ast}\times\mathbb{R}^m\times\mathbb{R}^N\big)\rightarrow\mathbb{R}$ defined by 
$S_t\,(Q;p,q,\xi):=\Phi_t\,\big(Q,p,S(q;\xi)-pq \big)$ is a generating function for $\varphi_t(L)$.
\end{lemma}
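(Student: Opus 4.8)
The plan is to check directly, for each fixed $t$, that the H\"{o}rmander construction of Proposition~\ref{horm} applied to $S_t$ reproduces $\varphi_t(L)$; continuity in $t$ is then automatic. Write $S=S(q;\xi)$ for the given generating function, so $L=j_S(\Sigma_S)$ with $\Sigma_S=\{\,\partial_\xi S=0\,\}$ and $j_S(q,\xi)=\big(q,\partial_q S(q;\xi),S(q;\xi)\big)$, and abbreviate $\varphi=\varphi_t$, $\Phi=\Phi_t$ and $w:=S(q;\xi)-pq$, so that $S_t(Q;p,q,\xi)=\Phi(Q,p,w)$.

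The first step is the key geometric fact about the Greek generating function: $\varphi$ maps the leaf $\Lambda_{p,z}:=j^1A_{p,z}(\mathbb{R}^m)$ onto the image of $j^1\big(\Phi(\cdot,p,z)\big)$, and the $\varphi$-preimage of the point of $\varphi(\Lambda_{p,z})$ lying over a base point $Q$ is $j^1A_{p,z}(q)$ with $q=\partial_p\Phi(Q,p,z)/\partial_z\Phi(Q,p,z)$. To prove this I pass to leaf-adapted coordinates $(q,p,z)$ on $J^1\mathbb{R}^m$, defined by $(\bar q,\bar p,\bar z)=(q,p,z+pq)$, in which the leaves are the slices with $p$ and $z$ fixed and the contact form $d\bar z-\bar p\,d\bar q$ becomes $dz+q\,dp$. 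In these coordinates $\varphi$ sends $(q,p,z)$ to $\big(Q,\partial_Q\Phi(Q,p,z),\Phi(Q,p,z)-Q\,\partial_Q\Phi(Q,p,z)\big)$ with $Q=Q(q,p,z)$; using $(Q,p,z)$ as coordinates on the source as well, a direct computation gives $\varphi^{*}(\text{target contact form})=\partial_p\Phi\,dp+\partial_z\Phi\,dz$, and this must equal $e^{g}(dz+q\,dp)$ where $g$ is the conformal factor of $\varphi$. Comparing the coefficients of $dz$ and of $dp$ yields $\partial_z\Phi=e^{g}$ (in particular $\partial_z\Phi\neq 0$, as also follows from $\varphi$ being $\mathcal{C}^{1}$-close to the identity) and $\partial_p\Phi=q\,\partial_z\Phi$, which is the asserted formula.

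The second step is to compute the fiber-critical set of $S_t$ in the fiber variables $(p,q,\xi)$. From $\partial_\xi w=\partial_\xi S$, $\partial_q w=\partial_q S-p$, $\partial_p w=-q$ and the chain rule one gets
$$\partial_\xi S_t=\partial_z\Phi\cdot\partial_\xi S,\qquad \partial_q S_t=\partial_z\Phi\cdot(\partial_q S-p),\qquad \partial_p S_t=\partial_p\Phi-q\,\partial_z\Phi,$$
where the derivatives of $\Phi$ are taken at $(Q,p,w)$. Since $\partial_z\Phi\neq 0$, vanishing of the first two equations forces $(q,\xi)\in\Sigma_S$ and $p=\partial_q S(q;\xi)$, so the point $\big(q,p,S(q;\xi)\big)$ is $j_S(q,\xi)\in L$ and lies on the leaf $\Lambda_{p,w}$; vanishing of the third equation is precisely the relation $q=\partial_p\Phi(Q,p,w)/\partial_z\Phi(Q,p,w)$ of the first step, i.e.\ it says that $\varphi\big(j_S(q,\xi)\big)$ is the point of $\varphi(\Lambda_{p,w})$ over $Q$. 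The construction of Proposition~\ref{horm} assigns to this fiber-critical point the point $\big(Q,\partial_Q S_t,S_t\big)=\big(Q,\partial_Q\Phi(Q,p,w),\Phi(Q,p,w)\big)$, which by the first step is exactly $\varphi\big(j_S(q,\xi)\big)$; reading the correspondence in both directions shows $j_{S_t}(\Sigma_{S_t})=\varphi(L)$. Finally $S_t$ is a transverse variational family: when $\varphi=\mathrm{id}$ it equals $S(q;\xi)+p(Q-q)$, a twisted stabilization of $S$ whose transversality follows from that of $S$ together with the non-degenerate pairing $p(Q-q)$, and transversality is an open condition, so it persists for $\varphi$ near the identity. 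I expect the main obstacle to be the bookkeeping in the first step --- setting up the adapted coordinates so that the conformal factor of $\varphi$ cancels and leaves the clean identity $q=\partial_p\Phi/\partial_z\Phi$; everything afterwards is a routine chain-rule computation.
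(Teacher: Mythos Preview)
Your proof is correct; the computations in both steps check out. In particular, the identity $\partial_p\Phi=q\,\partial_z\Phi$ (equivalently $q=\partial_p\Phi/\partial_z\Phi$) is exactly what makes the fiber-critical equation $\partial_p S_t=0$ encode the condition that the image point sits over the base coordinate $Q$, and the remaining two equations pin down a point of $L$ on the correct leaf. The transversality argument at the end is a bit breezy but the idea is right: since $\varphi_t$ is compactly supported and $\mathcal{C}^1$-close to the identity, the Latin generating function $F_t=\Phi_t-(z+pQ)$ is $\mathcal{C}^1$-small and compactly supported, so $S_t$ is a $\mathcal{C}^1$-small perturbation of the stabilization $S_0(Q;p,q,\xi)=S(q;\xi)+p(Q-q)$, and the explicit block-triangular form of the fiber Hessian of $S_0$ reduces transversality to that of $S$.

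The paper takes a different route: it does not give an argument at all, but simply cites the lemma as a special case of Chaperon's composition formula for generating functions (formula~(9) in \cite{Ch}, see also \cite{Th}). That formula expresses a generating function for $\varphi(L)$ in terms of a generating function for $L$ and the Greek generating function of $\varphi$, in full generality; the present lemma is the instance where $\varphi$ is $\mathcal{C}^1$-small. Your approach has the advantage of being self-contained and making the geometric content transparent --- the leaf-adapted coordinates and the relation $\partial_z\Phi=e^{g}$ are exactly the mechanism behind Lemma~\ref{lui2} as well --- at the cost of repeating a computation that Chaperon has done once and for all.
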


\noindent
This lemma can be obtained as a special case of the composition formula (9) in \cite{Ch} (see also Section III of \cite{Th}).

\begin{lemma}[\cite{Ch}, 2.2]\label{lui2}
Let $\varphi_t$ be a contact isotopy of $J^1\mathbb{R}^m$ with contact Hamiltonian $H_t:J^1\mathbb{R}^m\rightarrow\mathbb{R}$. Assume that $\varphi_t$ is $\mathcal{C}^1$-close to the identity and 
has Greek generating function
$\Phi_t:\mathbb{R}^m\times(\mathbb{R}^m)^{\ast}\times\mathbb{R}\rightarrow\mathbb{R}$. Then given $(q,p,z)$ in $\mathbb{R}^m\times(\mathbb{R}^m)^{\ast}\times\mathbb{R}$ it holds
$$\dfrac{d\Phi_t}{dt}\bigg\arrowvert_{t=t_0}\,(Q_{t_0},p,z)=H_{t_0}\,\big(Q_{t_0},P_{t_0}, Y_{t_0}\big)$$
where $(Q_{t_0},P_{t_0}, Y_{t_0})=\varphi_t\,(q,p,z+pq)$.
\end{lemma}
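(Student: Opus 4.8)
The plan is to unwind the definition of the Greek generating function and to recognise the left-hand side as the $z$-direction component of the contact vector field along a suitable integral curve. First I would fix $(q,p,z)$ and set $(Q_t,P_t,Y_t):=\varphi_t\big(q,p,z+pq\big)$, i.e. the image under $\varphi_t$ of the point $j^1A_{p,z}(q)=(q,p,z+pq)$ lying on the leaf $j^1A_{p,z}$ of the foliation. By the very definition of $\Phi_t$, the Legendrian $\varphi_t\big(j^1A_{p,z}\big)$ is the $1$-jet of the function $Q\mapsto\Phi_t(Q,p,z)$; evaluating at the base point $Q_t$ of our curve this gives the two identities
$$ P_t=\partial_Q\Phi_t(Q_t,p,z), \qquad Y_t=\Phi_t(Q_t,p,z). $$

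Next I would differentiate the second identity $Y_t=\Phi_t(Q_t,p,z)$ in $t$, carefully separating the explicit $t$-dependence from the dependence through $Q_t$. The chain rule gives $\dot Y_t=\frac{\partial\Phi_t}{\partial t}(Q_t,p,z)+\partial_Q\Phi_t(Q_t,p,z)\,\dot Q_t$, and substituting the first identity above this becomes
$$ \frac{\partial\Phi_t}{\partial t}(Q_t,p,z)=\dot Y_t-P_t\,\dot Q_t. $$
At $t=t_0$ the left side is exactly the quantity to be identified, with spatial argument $(Q_{t_0},p,z)$ as in the statement.

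It then remains to recognise the right side. The curve $t\mapsto(Q_t,P_t,Y_t)$ is, by construction, an integral curve of the contact vector field $X_t$ generating $\varphi_t$, so its velocity at time $t$ is $X_t(Q_t,P_t,Y_t)$. Applying the standard contact form $\eta=dz-p\,dq$ of $J^1\mathbb{R}^m$ to this velocity vector yields precisely $\dot Y_t-P_t\,\dot Q_t$; on the other hand, by the defining property of the contact Hamiltonian fixed in the preliminaries, $\eta(X_t)=H_t$, this equals $H_t(Q_t,P_t,Y_t)$. Combining with the previous display and setting $t=t_0$ gives the claimed formula.

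The computation is short; the only point that needs care is the bookkeeping in the chain-rule step — distinguishing the partial $t$-derivative of $\Phi_t$ at the frozen spatial point $(Q_{t_0},p,z)$ from the total derivative of $t\mapsto Y_t=\Phi_t(Q_t,p,z)$ along the flow — together with keeping a fixed sign convention for $X_t$; since the paper has normalised $\eta(X_t)=H_t$, no sign ambiguity arises. The $\mathcal{C}^1$-smallness hypothesis is used only to guarantee that $\varphi_t\big(j^1A_{p,z}\big)$ is again a section, hence that $\Phi_t$ is defined and smooth; it plays no role in the differential identity itself. (This is \cite[2.2]{Ch}, so one may alternatively just quote Chaperon.)
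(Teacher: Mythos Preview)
Your argument is correct; the paper itself gives no proof of this lemma but simply cites \cite[2.2]{Ch}, so there is nothing to compare against beyond the reference. Your unwinding of the definition---evaluating the $1$-jet identity $Y_t=\Phi_t(Q_t,p,z)$, differentiating in $t$, and recognising $\dot Y_t-P_t\dot Q_t$ as $\eta(X_t)=H_t$ along the flow---is exactly the standard derivation and matches Chaperon's computation.
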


\noindent
The next lemma is a special case for $t=0$ of Lemma \ref{lui}, and can also be easily verified directly.

\begin{lemma}\label{lui3}
Consider a Legendrian submanifold $L$ of $J^1\mathbb{R}^m$. If $S:\mathbb{R}^m\times\mathbb{R}^N\rightarrow\mathbb{R}$ is a generating function for $L$, then so is the function
$S_0:\mathbb{R}^m\times(\mathbb{R}^m)^{\ast}\times\mathbb{R}\rightarrow\mathbb{R}$ defined by 
$S_0\,(Q;p,q,\xi):=S(q;\xi)+p(Q-q)$.
\end{lemma}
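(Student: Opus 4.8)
The plan is to verify directly from the definitions in \ref{lagrsubmfds} and \ref{legendrian} that $S_0$ is a generating function for $L$: I will identify its fiber-critical set with $\Sigma_S$ and check that the two associated Legendrian immersions into $J^1\mathbb{R}^m$ coincide. At the end I will note that the statement is also the $t=0$ case of Lemma \ref{lui}.

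First I would regard $S_0$ as a function on the trivial vector bundle $p':\mathbb{R}^m\times\big((\mathbb{R}^m)^{\ast}\times\mathbb{R}^m\times\mathbb{R}^N\big)\longrightarrow\mathbb{R}^m$ with base coordinate $Q$ and fiber coordinates $(p,q,\xi)$, and write down the fiber-critical equations. Setting the fiber derivatives of $S_0(Q;p,q,\xi)=S(q;\xi)+pQ-pq$ to zero gives $\partial_pS_0=Q-q=0$, $\partial_qS_0=\partial_qS(q;\xi)-p=0$, and $\partial_\xi S_0=\partial_\xi S(q;\xi)=0$. The last two equations say precisely that $(q,\xi)\in\Sigma_S$ and that $p=v^{\ast}(q,\xi)$, the Lagrange multiplier of $S$ at $(q,\xi)$; together with $Q=q$ this exhibits $(q,\xi)\longmapsto\big(q;\,v^{\ast}(q,\xi),\,q,\,\xi\big)$ as a bijection $\Sigma_S\to\Sigma_{S_0}$. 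To see that $(p',S_0)$ is a transverse variational family (so that $\Sigma_{S_0}$ is genuinely a submanifold and the above bijection a diffeomorphism), I would check that $0$ is a regular value of the fiber derivative of $S_0$: varying $Q$ and $p$ freely takes care of the first two equations, while transversality of $(E,S)$ takes care of the $\xi$-equation. This step is routine.

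Next I would compute the Legendrian immersion $j_{S_0}$ at a fiber-critical point $e'=\big(q;\,v^{\ast}(q,\xi),\,q,\,\xi\big)$. Lifting $X\in T_Q\mathbb{R}^m$ to $\widehat X=(X,0,0,0)\in T_{e'}E'$, the Lagrange multiplier of $S_0$ is $dS_0(\widehat X)=\partial_QS_0\cdot X=p\cdot X=v^{\ast}(q,\xi)(X)$, so the $T^{\ast}\mathbb{R}^m$-component of $j_{S_0}(e')$ equals $v^{\ast}(q,\xi)$. Moreover $S_0(e')=S(q;\xi)+p(Q-q)=S(q;\xi)$ because $Q=q$. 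Hence $j_{S_0}(e')=\big(q,\,v^{\ast}(q,\xi),\,S(q;\xi)\big)=j_S(q,\xi)$, so $\widetilde{L_{S_0}}=j_{S_0}(\Sigma_{S_0})=j_S(\Sigma_S)=\widetilde{L_S}=L$, as claimed.

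Finally I would remark that this is also Lemma \ref{lui} applied to the constant isotopy at the identity: the identity of $J^1\mathbb{R}^m$ has vanishing Latin generating function and hence Greek generating function $\Phi_0(Q,p,z)=z+pQ$, and substituting gives $\Phi_0\big(Q,p,S(q;\xi)-pq\big)=S(q;\xi)+p(Q-q)=S_0(Q;p,q,\xi)$. The only point requiring the slightest care is the transversality claim in the first step, and as indicated it reduces immediately to the nondegeneracy of the pairing between $p$ and $Q-q$ together with the transversality already assumed for $(E,S)$; there is no real obstacle here.
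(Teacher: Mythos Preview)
Your proof is correct and follows exactly the two approaches indicated in the paper: the direct verification of the fiber-critical set and Legendrian immersion, and the observation that this is the $t=0$ case of Lemma~\ref{lui} via the Greek generating function $\Phi_0(Q,p,z)=z+pQ$ of the identity. The paper itself gives no further details beyond stating that both routes work, so your write-up is a faithful expansion of what is asserted.
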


\begin{proof}[Proof of Proposition \ref{parordcont}] 
Let $\phi_1\phi_0^{\phantom{0}-1}$ be the time-1 map of a contact isotopy $\psi_t$ of $\mathbb{R}^{2n+1}$. We will first prove the result assuming that $\psi_t$ is $\mathcal{C}^1$-close to the identity. Consider the contact isotopy $\Psi_{\psi_t}$ of $J^1\mathbb{R}^{2n+1}$: we know that it is $\mathcal{C}^1$-close to the identity and has non-negative Hamiltonian, because so does $\psi_t$ by assumption. Thus by Lemma \ref{lui2} if $\Psi_t:J^1\mathbb{R}^{2n+1}\rightarrow \mathbb{R}$ is a Greek generating function for $\Psi_{\psi_t}$ then $\dfrac{d}{dt}\Psi_t\geq 0$. Take now a generating function $S:\mathbb{R}^{2n+1}\times\mathbb{R}^N\rightarrow\mathbb{R}$ for $\Gamma_{\phi_0}\subset J^1\mathbb{R}^{2n+1}$. Then, by Lemma \ref{lui}, $\Gamma_{\psi_t\phi_0}=\Psi_{\psi_t}(\Gamma_{\phi_0})$ has generating function 
$S_t\,(Q;p,q,\xi):=\Psi_t\,\big(Q,p,S(q;\xi)-pq \big)$. Thus $\dfrac{d}{dt}S_t\geq 0$, in particular $S_1\geq S_0$. Note that $S_1$ is a generating function for $\Gamma_{\phi_1}$, and $S_0$ is a generating function for $\Gamma_{\phi_0}$ related to $S$ as in Lemma \ref{lui3}. For the general case the result follows by repeating this process and applying Lemma \ref{lui3} at every step. This can be done because it can be proved (see Lemma 1 in Section 2.4 of \cite{Ch}) that there exists a $\delta>0$ such that every $\psi_t\psi_s^{-1}$ with $|s-t|<\delta$ is 
$\mathcal{C}^1$-small enough to have a Greek generating function.
\end{proof}

\subsection{Invariants for Legendrian submanifolds}\label{s_inv_leg}

Let $B$ be a closed manifold, and denote by $\mathcal{L}$ the set of all Legendrian submanifolds of $J^1B$ contact isotopic to the 0-section. As in the symplectic case, for any $L\in\mathcal{L}$ and $u\neq0$ in $H^{\ast}(B)$ we can define a real number $c(u,L)$ by $$ c(u,L):=\text{inf}\,\{\,a\in \mathbb{R} \;|\;i_a^{\phantom{a}\ast}(u)\neq 0\,\}$$
where $i_a$ is the inclusion $(E^a,E^{-\infty})\longrightarrow (E,E^{-\infty})$ of sublevel sets of any generating function for $L$.

\begin{lemma}\label{inv_leg}
Let $\mu\in H^n(B)$ denote the orientation class of $B$. The map $H^{\ast}(B)\times \mathcal{L}\longrightarrow\mathbb{R}$, $(u,L)\longmapsto c(u,L)$ satisfies the following properties:
\vspace{-0.2cm}
\begin{enumerate}
\renewcommand{\labelenumi}{(\roman{enumi})}
\item If $L_1$, $L_2$ have generating functions $S_1$, $S_2: E \longrightarrow\mathbb{R}$ with $|S_1-S_2|_{\mathcal{C}^0}\leq \varepsilon$, then for any $u$ in $H^{\ast}(B)$ it holds that $|c(u,L_1)-c(u,L_2)|\leq \varepsilon$.
\item $$c\big(u\cup v, L_1+L_2\big)\geq c(u,L_1)+c(v,L_2) $$
where $L_1+L_2$ is defined by
$$ L_1+L_2:=\{\;(q,p,z)\in J^1B \;|\; p=p_1+p_2, \; z=z_1+z_2, $$
$$(q,p_1,z_1)\in L_1, \; (q,p_2,z_2)\in L_2  \;\}.$$
\item $$c(\mu,\overline{L})=-c(1,L),$$ where $\overline{L}$ denotes the image of $L$ under the map 
$J^1B\rightarrow J^1B$, $(q,p,z)\mapsto(q,-p,-z)$.
\item Assume $L\cap 0_B\neq \emptyset$. Then $c(\mu, L)=c(1,L)$ if and only if $L$ is the $0$-section. In this case we have $$c(\mu,L)=c(1,L)=0.$$
\end{enumerate}
\end{lemma}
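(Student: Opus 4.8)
The plan is to follow the proof of Lemma \ref{inv_lagr} very closely, treating each item in turn, and to flag at the end the one place where the contact setting genuinely differs. Item (i) is literally the same argument as in the symplectic case: from $|S_1-S_2|_{\mathcal{C}^0}\leq\varepsilon$ one gets the sandwich of sublevel sets $\big(E^{a-\varepsilon}\big)_2\subset\big(E^a\big)_1\subset\big(E^{a+\varepsilon}\big)_2$, and the definition of $c(u,L_j)$ as an infimum over $a$ with $i_a^{\ast}(u)\neq0$ then forces $|c(u,L_1)-c(u,L_2)|\leq\varepsilon$; no contact geometry enters. Items (ii) and (iii) are also purely algebraic-topological: the sum operation $L_1+L_2$ on Legendrian submanifolds of $J^1B$ corresponds, on the level of generating functions, to the fiber sum $S_1\oplus S_2$ on $E_1\times_B E_2$ (where now one adds the $z$-coordinates as well as the $p$-coordinates, which is exactly why the definition of $L_1+L_2$ carries the extra condition $z=z_1+z_2$), and the map $(q,p,z)\mapsto(q,-p,-z)$ corresponds to $S\mapsto -S$. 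Once this dictionary is in place, the cup-product subadditivity estimate (ii) and the duality formula (iii) follow verbatim from the arguments in \cite{V}, since those arguments only use the Thom isomorphism, excision, and naturality of the cohomology long exact sequences — all of which are insensitive to the presence of the extra $\mathbb{R}$-factor. So for (i)–(iii) I would simply say "the proof is identical to that of Lemma \ref{inv_lagr}" and refer to \cite{V}.

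The substance is in (iv), which is the contact analogue of Lemma \ref{inv_lagr}(iv)–(v) and is the part where the argument must be redone rather than copied. The "if" direction is trivial: if $L=0_B$ then a quadratic form generates it and $c(\mu,L)=c(1,L)=0$. For the "only if" direction I would argue as follows. Recall from \ref{legendrian} that critical points of a generating function $S$ for $L$ correspond to intersection points of $L$ with the $0$-wall, with critical value equal to the $z$-coordinate of the intersection point; since we assume $L\cap 0_B\neq\emptyset$, the function $S$ has at least one critical point, and in fact (after normalizing so that the distinguished critical point $P$ has critical value $0$) there is a critical value equal to $0$. Now $c(1,L)$ is the smallest critical value detected by the class $1$ and $c(\mu,L)$ the one detected by $\mu$, so in general $c(1,L)\leq c(\mu,L)$, with equality precisely when $S$ has a single critical value. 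The key point is that if $c(1,L)=c(\mu,L)$ then all of $H^{\ast}(E,E^{-\infty})\cong H^{\ast}(B)$ is "concentrated at one level," which by a standard Morse-theoretic argument (exactly as in \cite{V}, using that $S$ is quadratic at infinity so Morse theory applies) forces $L$ to be a graph $j^1f$ over $B$, hence $S$ to be, up to the equivalences of Theorem \ref{eugfcont}, a quadratic form plus $f\circ p$; but $L$ being contact isotopic to the $0$-section and meeting $0_B$, together with $f$ necessarily being constant, forces $f\equiv 0$ and $L=0_B$.

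The main obstacle — and the reason (iv) is weaker than its symplectic counterpart (note it drops property (v) entirely) — is the failure of the conjugation-invariance statement Lemma \ref{inv_lagr}(v) in the contact world, which the introduction has already warned about: a contact Hamiltonian isotopy $\Psi_t$ no longer preserves the relevant "action set" $\Lambda(L)$, because the $z$-coordinate of an intersection point with the $0$-wall is not a contact invariant (it shifts by the conformal factor). So I cannot run the continuity-plus-total-disconnectedness argument that proved (v) symplectically, and I should not attempt to state a contact (v); instead the honest thing is to prove only (i)–(iv) as above, and then — this being the whole point of the paper — recover the needed rigidity from the fact that critical value $0$ \emph{is} preserved under conjugation (because critical points with value $0$ are genuine fixed points), which is precisely the content of the hypothesis $L\cap 0_B\neq\emptyset$ built into (iv). I would therefore present (iv) as the contact substitute for the combination of (iv) and (v), make the $L\cap 0_B\neq\emptyset$ hypothesis do the work that conjugation-invariance did before, and defer the discussion of why (v) genuinely fails to \ref{Bhupalorder} as the text promises.
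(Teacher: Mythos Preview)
Your argument is essentially correct but works much harder than the paper does. The paper's proof is a single observation: if $S$ generates the Legendrian $L\subset J^1B$, then the \emph{same} $S$ generates the Lagrangian projection $\pi(L)\subset T^{\ast}B$ (this is immediate from the definitions in \ref{legendrian}), hence $c(u,L)=c(u,\pi(L))$ and every item reduces wholesale to Lemma \ref{inv_lagr}. In particular there is nothing to redo for (iv): $c(\mu,L)=c(1,L)$ gives $\pi(L)=0_B$ by the symplectic (iv), so $L=j^1f$ with $f$ constant, and the hypothesis $L\cap 0_B\neq\emptyset$ pins down $f\equiv 0$. Your direct Morse-theoretic argument arrives at the same place, but the reduction-to-projection route makes the whole lemma a two-line corollary.

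Two remarks on your framing. First, your parenthetical about ``normalizing so that the distinguished critical point $P$ has critical value $0$'' is misplaced: in the contact setting $\mathcal{L}$ carries no basepoint $P$, and no normalization is needed since the contact uniqueness theorem (Theorem \ref{eugfcont}) does \emph{not} allow addition of a constant --- the $z$-coordinate already fixes the value. Second, (iv) here is not ``the contact substitute for the combination of (iv) and (v)''; it is simply the contact (iv). The genuine replacement for the missing (v) is the separate Lemma \ref{questo2} (Bhupal's weaker conjugation statement), and the hypothesis $L\cap 0_B\neq\emptyset$ in (iv) is not doing the work of (v) --- it is only there to rule out the translates $0_B\times\{c\}$ that the projection argument cannot distinguish from the $0$-section.
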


\begin{proof}
If $S$ is a generating function for $L\subset J^1B$ then $S$ also generates $\pi(L)$, where $\pi$ denotes the projection $J^1B=T^{\ast}B\times\mathbb{R}\rightarrow T^{\ast}B$. So $c(u,L)=c(u,\pi(L))$ and thus all the results follow from the symplectic case. 
\end{proof}

\noindent
Property (v) of Lemma \ref{inv_lagr} does not hold in the contact case. However Bhupal \cite{B} showed that the following weaker statement is still true.

\begin{lemma}\label{questo2}
For any contactomorphism $\Psi$ of $J^1B$ contact isotopic to the identity, $u\neq 0$ in $H^{\ast}B$ and $L\in \mathcal{L}$ it holds 
$$c\big(u,\Psi(L)\big)=0 \quad\Leftrightarrow \quad c\big(u,L-\Psi^{-1}(0_B)\big)=0.$$
\end{lemma}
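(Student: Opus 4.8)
The plan is to run a one-parameter argument along a contact isotopy joining the identity to $\Psi$, in the spirit of the proof of Lemma \ref{inv_lagr}(v), but replacing the fact used there (``the relevant subset of $\mathbb{R}$ is totally disconnected'', which fails in the contact setting since contact isotopies do not preserve critical values) by the weaker statement that the value $0$ is attained either at every time or at no time. So let $\Psi_t$, $t\in[0,1]$, be a contact isotopy of $J^1B$ with $\Psi_0=\mathrm{id}$ and $\Psi_1=\Psi$, and set $L_t:=\Psi_t^{-1}(\Psi(L))-\Psi_t^{-1}(0_B)$, so that $L_0=\Psi(L)$ and $L_1=L-\Psi^{-1}(0_B)$. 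It then suffices to show that $h(t):=c(u,L_t)$ vanishes at $t=0$ if and only if it vanishes at $t=1$. First I would check that $h$ is continuous: by Theorem \ref{eugfcont}, applied to the Legendrians $\Psi(L)$ and $0_B$ under the contact isotopy $\Psi_t^{-1}$, one obtains continuously varying g.f.q.i. for $\Psi_t^{-1}(\Psi(L))$ and for $\Psi_t^{-1}(0_B)$; combining them (the generating function of a difference being built from those of the two pieces) gives a continuous family of g.f.q.i. $S_t$ for $L_t$, and then Lemma \ref{inv_leg}(i) yields continuity of $h$. Recall also that $h(t)$ is always a critical value of $S_t$.

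The crucial observation is that the critical points of $S_t$ with critical value $0$ do not move with $t$. Unwinding the definition of the operation ``$-$'' on Legendrian submanifolds, a point of the form $(q,0,0)$ lies on $L_t$ exactly when some point of $\Psi_t^{-1}(\Psi(L))\cap\Psi_t^{-1}(0_B)=\Psi_t^{-1}\big(\Psi(L)\cap 0_B\big)$ projects to $q$; hence $\Psi_t$ carries $L_t\cap 0_B$ diffeomorphically onto the $t$-independent set $\Psi(L)\cap 0_B$, and by \ref{legendrian} the set of critical points of $S_t$ of critical value $0$ is, for every $t$, canonically identified with the fixed set $\Psi(L)\cap 0_B$. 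In particular $0$ is a critical value of $S_t$ for all $t$ or for none, according to whether $\Psi(L)\cap 0_B$ is nonempty or empty. Next one argues that no other critical value of $S_t$ can reach $0$: if $c_n\neq 0$ were a critical value of $S_{t_n}$ with $c_n\to 0$ and $t_n\to t_\ast$, then, using compactness of $\Sigma_{S_t}\cong L_t$, a corresponding critical point would converge to a critical point of $S_{t_\ast}$ at level $0$, i.e. to a point of the locus above; but (after arranging that $\Psi(L)$ meets $0_B$ cleanly) that locus is a nondegenerate critical submanifold, which persists under the deformation $t\mapsto S_t$ without changing its critical value and without spawning nearby critical points. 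Hence for each $t_0$ there are $\varepsilon>0$ and a neighbourhood $I\ni t_0$ such that the only possible critical value of $S_t$ in $[-\varepsilon,\varepsilon]$, for $t\in I$, is $0$. It follows that $\{t:h(t)=0\}$ is open (if $h(t_0)=0$, shrink $I$ so that $|h|<\varepsilon$ on $I$; since $h(t)$ is a critical value of $S_t$, this forces $h\equiv 0$ on $I$) and trivially closed, hence all of $[0,1]$ or empty since $[0,1]$ is connected; comparing $t=0$ with $t=1$ gives $c\big(u,\Psi(L)\big)=0\Leftrightarrow c\big(u,L-\Psi^{-1}(0_B)\big)=0$.

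I expect the last step — that no critical value of $S_t$ drifts into $0$ — to be the main obstacle, since this is exactly where the contact case differs from the symplectic one: there one gets the stronger conclusion that $h$ is \emph{constant}, the values lying in a totally disconnected set, whereas here $h$ genuinely varies. The natural way to handle it is the one above, exploiting that $L_t\cap 0_B$ is canonically diffeomorphic to the fixed manifold $\Psi(L)\cap 0_B$ for all $t$, so that ``births'' of level-$0$ critical points are topologically excluded; the only delicate point is the preliminary, generic reduction to the case in which $\Psi(L)$ is transverse (or clean) to $0_B$, which is what makes the persistence statement for the level-$0$ critical locus apply literally.
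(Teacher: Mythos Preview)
Your proposal is correct and follows essentially the same approach as the paper: the same one-parameter family $\Lambda_t=\Psi_t^{-1}\Psi(L)-\Psi_t^{-1}(0_B)$, the same key observation that level-$0$ critical points of $S_t$ correspond bijectively (via $\Psi_t$) to the $t$-independent set $\Psi(L)\cap 0_B$, and the same reduction to a transversality hypothesis handled afterwards by approximation. The only cosmetic difference is that the paper packages the Morse-theoretic step by explicitly constructing the continuation path $y_t:=i_{S_t}^{-1}\big(\pi(\Psi_t^{-1}\Psi_{t_0}(\cdot))\big)$ of non-degenerate critical points and arguing it must agree with $x_t$, whereas you phrase the same persistence/isolation statement as an open--closed argument for $\{t:h(t)=0\}$.
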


\begin{proof}
Let $\Psi_t$ be a contact isotopy of $J^1B$ with $\Psi=\Psi_t|_{t=1}$, and for every $t$ consider the Legendrian submanifold $\Lambda_t=\Psi_t^{\phantom{t}-1}\Psi(L)-\Psi_t^{-1}(0_B)$. We have $\Lambda_0=\Psi(L)$ and $\Lambda_1=L-\Psi^{-1}(0_B)$.
Let $c_t=c(u,\Lambda_t)$. We will prove that if $c_{t_0}=0$ for some $t\in[0,1]$ then $c_t=0$ for all $t$. Let $S_t:E\rightarrow\mathbb{R}$ be a 1-parameter family of generating functions for $\Lambda_t$. Consider a path $x_t$ in $E$ such that each $x_t$ is a critical point of $S_t$ with critical value $c_t$, for $t$ in some subinterval of $[0,1]$ containing $t_0$. Recall that $x_t$ corresponds to an intersection of $\Lambda_t$ with the 0-wall of $J^1B$. Since by hypothesis $c_{t_0}=0$, $x_{t_0}$ corresponds in fact to an intersection of $\Lambda_{t_0}$ with the 0-section. We will first assume that this intersection is transverse, so that $x_{t_0}$ is a non-degenerate critical point of $S_{t_0}$. The idea of the proof now is to construct a path $y_t$ in $E$ such that $y_{t_0}=x_{t_0}$ and each $y_t$ is a non-degenerate critical point of $S_t$ with critical value $0$. It will then follow from Morse theory that the two paths $x_t$ and $y_t$ must coincide, so that $c_t=0$ for all $t$. The path $y_t$ can be constructed as follows. The key observation is that (non-degenerate) critical points of $S_t$ with critical value $0$ are in 1-1 correspondence with (transverse) intersection points of $\Lambda_t$ with $0_B$. Moreover (transverse) intersections of $\Lambda_t$ with $0_B$ correspond to (transverse) intersections of $\Psi_t^{\phantom{t}-1}\Psi(L)$ with $\Psi_t^{\phantom{t}-1}(0_B)$ (by projecting to $0_B$), and the last correspond  to (transverse) intersections of $\Psi(L)$ with $0_B$ (by applying $\Psi_t$), i.e. of $\Lambda_0$ with $0_B$. Using this we see that $y_t':=\pi\,\Big(\Psi_t^{-1}\Psi_{t_0}\big(\widetilde{i_{S_0}(x_0)}\big)\Big)$ is a transverse intersection of $\Lambda_t$ with $0_B$, where $\widetilde{i_{S_0}(x_0)}$ denotes the point in $\Psi_{t_0}^{\phantom{t_0}-1}\Psi(L)\,\cap\,\Psi_{t_0}^{\phantom{t_0}-1}(0_B)$ that projects to $i_{S_0}(x_0)\in\Psi_{t_0}^{\phantom{t_0}-1}\Psi(L)-\Psi_{t_0}^{\phantom{t_0}-1}(0_B)$. Thus $y_t:=i_{S_t}^{\phantom{S_t}-1}(y_t')$ is the desired 1-parameter family of critical points of $S_t$. This finishes the proof under the assumption that $x_{t_0}$ is a non-degenerate critical point of $S_{t_0}$. The general case follows from an approximation argument (see \cite{B}).
\end{proof}

\noindent
In \cite{B} Bhupal realized that this result is enough to extend Viterbo's partial order to the contact case. We will review his construction in \ref{Bhupalorder}. However, Lemma \ref{questo2} is too weak to give an interesting generalizations to the contact case of the Viterbo capacity. We will now give a stronger version of Lemma \ref{questo2}, which is only available in the 1-periodic case and will enable us to define in \ref{contact_cap} a contact capacity for domains in $\mathbb{R}^{2n}\times S^1$.\\
\\
We will denote by $\lceil\cdot\rceil$ the integer part of a real number, i.e. the smallest integer that is greater or equal to the given number.

\begin{lemma}\label{questo4}
Let $\Psi$ be a contactomorphism of $J^1B$ which is 1-periodic in the $\mathbb{R}$-coordinate of $J^1B=T^{\ast}B\times\mathbb{R}$ and isotopic to the identity through 1-periodic contactomorphisms. Then for every $u\neq 0$ in $H^{\ast}(B)$ and $L\in\mathcal{L}$ it holds
$$\lceil c\big(u,\Psi(L)\big)\rceil=\lceil c\big(u,L-\Psi^{-1}(0_B)\big)\rceil.$$
\end{lemma}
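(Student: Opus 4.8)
The plan is to follow the proof of Lemma \ref{questo2} almost verbatim, with the zero section $0_B$ replaced throughout by its integer translates, the point being that the $1$-periodicity of $\Psi$ makes the relevant intersection sets $t$-independent precisely when the translation amount is an integer. Choose a contact isotopy $\Psi_t$ through $1$-periodic contactomorphisms with $\Psi_0=\mathrm{id}$ and $\Psi_1=\Psi$, set $\Lambda_t:=\Psi_t^{-1}\Psi(L)-\Psi_t^{-1}(0_B)$ and $c_t:=c(u,\Lambda_t)$, so that $\Lambda_0=\Psi(L)$, $\Lambda_1=L-\Psi^{-1}(0_B)$, and (by Lemma \ref{inv_leg}(i) and Theorem \ref{eugfcont}) $t\mapsto c_t$ is continuous. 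I will show that $\lceil c_t\rceil$ is independent of $t$; evaluating at $t=0,1$ then gives the lemma. Since $\lceil\cdot\rceil$ is locally constant off $\mathbb Z$ and $t\mapsto c_t$ is continuous, it is enough to establish the claim: \emph{if $c_{t_0}\in\mathbb Z$ for some $t_0$, then $c_t=c_{t_0}$ for all $t$}. Indeed, if such a $t_0$ exists the claim forces $\lceil c_t\rceil\equiv c_{t_0}$, while if no $c_t$ is an integer then $t\mapsto\lceil c_t\rceil$ is a continuous $\mathbb Z$-valued map and hence constant.

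So write $c_{t_0}=k\in\mathbb Z$. Let $T_k\colon J^1B\to J^1B$, $(q,p,z)\mapsto(q,p,z+k)$, be translation by $k$ in the $\mathbb R$-coordinate; it is a strict contactomorphism, and since each $\Psi_t$ is $1$-periodic we have $\Psi_t\circ T_k=T_k\circ\Psi_t$. Recall from \ref{legendrian} that critical points of a generating function for $\Lambda_t$ with critical value $k$ correspond to intersection points of $\Lambda_t$ with $T_k(0_B)$. Unwinding the definition of the fibrewise sum in Lemma \ref{inv_leg}(ii), a point of $\Lambda_t$ lying on $T_k(0_B)$ is exactly of the form $T_k(b)-b$ with $b\in\Psi_t^{-1}(0_B)$ and $T_k(b)\in\Psi_t^{-1}\Psi(L)$. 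Applying $\Psi_t$ and using $\Psi_t\circ T_k=T_k\circ\Psi_t$, the conditions on $b$ become $\Psi_t(b)\in 0_B$ and $T_k(\Psi_t(b))\in\Psi(L)$; that is, $\beta:=\Psi_t(b)$ ranges over the intersection of $\Psi(L)=\Lambda_0$ with $T_k(0_B)$. Thus $b\mapsto\Psi_t(b)$ is a bijection, continuous in $t$ and compatible with transversality, between the intersection points of $\Lambda_t$ with $T_k(0_B)$ and the ($t$-independent) intersection points of $\Lambda_0$ with $T_k(0_B)$ --- equivalently, between the critical points of $S_t$ and of $S_0$ with critical value $k$.

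With this in hand the claim follows as in Lemma \ref{questo2}. Let $S_t\colon E\to\mathbb R$ be a continuous family of generating functions for $\Lambda_t$, and let $x_t$ be a path of critical points of $S_t$ with critical value $c_t$ near $t_0$; since $c_{t_0}=k$, the point $x_{t_0}$ corresponds to an intersection of $\Lambda_{t_0}$ with $T_k(0_B)$. Assuming first that this intersection is transverse, so that $x_{t_0}$ is a non-degenerate critical point, let $\beta$ be the corresponding ($t$-independent) intersection of $\Lambda_0$ with $T_k(0_B)$, put $b_t:=\Psi_t^{-1}(\beta)$, and let $y_t:=i_{S_t}^{\phantom{S_t}-1}\!\big(T_k(b_t)-b_t\big)\in E$ be the associated critical point of $S_t$. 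Then $y_t$ is a continuous path of non-degenerate critical points of $S_t$, each of critical value $k$, with $y_{t_0}=x_{t_0}$; by Morse theory the branches $x_t$ and $y_t$ coincide near $t_0$, so $c_t=k$ in a neighbourhood of $t_0$, and since $\{t:c_t=k\}$ is also closed by continuity it is all of $[0,1]$. The general case (degenerate intersection) is reduced to this one by the approximation argument of \cite{B}, exactly as in the proof of Lemma \ref{questo2}.

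I expect the only real work to be in the middle paragraph: carefully unwinding the fibrewise difference $L_1-L_2$ on the $T_k(0_B)$ locus and checking that $b\mapsto\Psi_t(b)$ is a transversality-preserving bijection onto $\Lambda_0\cap T_k(0_B)$. This is where --- and the only place where --- the hypotheses that $\Psi$ be $1$-periodic and that $k$ be an integer enter: for a non-integer shift, $\Psi_t$ and the corresponding translation need not commute, and the intersection set on the right would depend on $t$. The Morse-theoretic step identifying the two paths of critical points, and the approximation argument handling degenerate intersections, carry over unchanged from Lemma \ref{questo2} and require no new ideas.
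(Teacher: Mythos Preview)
Your argument is correct and follows essentially the same route as the paper's own proof: set $\Lambda_t=\Psi_t^{-1}\Psi(L)-\Psi_t^{-1}(0_B)$, show that whenever $c_{t_0}=k\in\mathbb Z$ one can produce a continuous path of non-degenerate critical points of $S_t$ with value $k$ through $x_{t_0}$, and conclude by the Morse-theoretic branch argument and the approximation step from \cite{B}. Your explicit unwinding of the bijection between $\Lambda_t\cap T_k(0_B)$ and the $t$-independent set $\Psi(L)\cap T_k(0_B)$ via $b\mapsto\Psi_t(b)$, together with the commutation $\Psi_t\circ T_k=T_k\circ\Psi_t$, is exactly the mechanism the paper summarizes with the formula $y_t'=\pi\big(\Psi_t^{-1}\Psi_{t_0}(\widetilde{i_{S_0}(x_0)})\big)+(0,0,k)$; one minor slip is that it is $T_k(\beta)$, not $\beta$ itself, that lies in $\Psi(L)\cap T_k(0_B)$, but this does not affect the argument.
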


\begin{proof}
Let $\Psi_t$ be a contact isotopy of $J^1B$ with $\Psi=\Psi_t|_{t=1}$, and consider $c_t=c(u,\Lambda_t)$ where $\Lambda_t=\Psi_t^{\;-1}\Psi(L)-\Psi_t^{\;-1}(0_B)$. We will show that if $k$ is an integer and $c_{t_0}=k$ for some $t_0$, then $c_t=k$ for all $t$. Let $S_t:E\rightarrow\mathbb{R}$ be a family of generating functions for $\Lambda_t$. Then $c_t$ is a critical value of $S_t$. As in the proof of Proposition \ref{questo2} the result follows if we prove that if $x_{t_0}$ is a (non-degenerate) critical point of $S_{t_0}$ with critical value $k$ then there is a 1-parameter family of (non-degenerate) critical points $y_t$ of $S_t$ with $y_{t_0}=x_{t_0}$ and all with critical value $k$. The idea to prove this is that, since the $\Psi_t$ are 1-periodic, the construction of the proof of Lemma \ref{questo2} can be adapted to the case in which the critical value $0$ is replaced by an integer $k$. More precisely, it is easy to check that if $x_{t_0}$ is a critical point of $S_{t_0}$ with critical value $k$ then 
$y_t':=\pi\,\Big(\Psi_t^{-1}\Psi_{t_0}\big(\widetilde{i_{S_0}(x_0)}\big)\Big)+(0,0,k)$ is in the intersection of $\Lambda_t$ with $0_B\times\{k\}$. Thus $y_t:=i_{S_t}^{\phantom{S_t}-1}(y_t')$ is the desired 1-parameter family of critical points of $S_t$.
\end{proof}

\subsection{Invariants for contactomorphisms of $\mathbb{R}^{2n+1}$}\label{inv_contactomorphisms}

Consider a contactomorphism $\phi$ either in $\text{Cont}_0^{\phantom{0}c}\,(\mathbb{R}^{2n+1})$ or in $\text{Cont}^{\; \; c}_{\text{1-per}} (\mathbb{R}^{2n+1})$, and define
$$c(\phi):=c(\mu,\Gamma_{\phi})$$
where $\Gamma_{\phi}$ is regarded as a Legendrian submanifold either of $J^1S^{2n+1}$ or $J^1(S^{2n}\times S^1)$ and $\mu$ is the orientation class either of $S^{2n+1}$ or $S^{2n}\times S^1$. Note that $c(\phi)$ is a critical value of any generating function for $\Gamma_{\phi}$, so by Lemma \ref{crucialcont} we have that $c(\phi)=\phi_3(q)-z$ for some translated point $q=(x,y,z)$ of $\phi$. Note also that $c(\text{id})=0$. Moreover $c$ satisfies the following properties.

\begin{prop}\label{inv_cont}
For all $\phi$, $\psi$ in $\text{Cont}_0^{\phantom{0}c}\,(\mathbb{R}^{2n+1})$ or $\text{Cont}^{\; \; c}_{\text{1-per}} (\mathbb{R}^{2n+1})$ it holds:
\vspace{-0.2cm}
\begin{enumerate}
\renewcommand{\labelenumi}{(\roman{enumi})}
\item $c(\phi)\geq 0$.
\item If $c(\phi)=c(\phi^{-1})=0$ then $\phi$ is the identity.
\item If $c(\phi)=c(\psi)=0$ then $c(\phi\psi)=0$.
\item If $\phi_1 \leq \phi_2$ in the sense of \ref{contactomorphisms} then $c(\phi_1)\leq c(\phi_2)$.
\end{enumerate}
\end{prop}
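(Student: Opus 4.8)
The plan is to deduce each property from the corresponding statement about the invariants $c(u,L)$ for Legendrian submanifolds (Lemmas~\ref{inv_leg}, \ref{questo2}) together with the properties of the construction $\phi\mapsto\Gamma_\phi$ established in \ref{contactomorphisms}, mimicking the proof of Proposition~\ref{inv_sympl} but replacing Lemma~\ref{inv_lagr}(v) with its weaker substitute Lemma~\ref{questo2} wherever necessary. Throughout, $\Gamma_\phi=\Psi_\phi(0\text{-section})$ intersects the $0$-wall at the point at infinity, which plays the role of $P$, and we recall the composition identities $\Psi_{\phi_1}\circ\Psi_{\phi_2}=\Psi_{\phi_1\phi_2}$ and $\Psi_\phi^{-1}=\Psi_{\phi^{-1}}$.

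For (i), I would argue exactly as in Proposition~\ref{inv_sympl}(i): show $c(1,\overline{\Gamma_\phi})\le 0$ by checking $i_0^{\,\ast}(1)\neq 0$, using that near the point at infinity $\overline{\Gamma_\phi}$ agrees with the $0$-section so that the relevant restriction of a g.f.q.i.\ is a quadratic form, hence the vertical map in the compactification diagram is an isomorphism and $1\mapsto 1$; then conclude $c(\phi)=c(\mu,\Gamma_\phi)=-c(1,\overline{\Gamma_\phi})\ge 0$ by Lemma~\ref{inv_leg}(iii). For (ii), first observe that $c(u,\Gamma_\phi)=c(u,\overline{\Gamma_{\phi^{-1}}})$ for all $u$: this is the place where in the symplectic case one applied Lemma~\ref{inv_lagr}(v) to $L=0_B$, $\Psi=\Psi_\phi$, and here one should instead use that $c(u,\Psi_\phi(0_B))=0\Leftrightarrow c(u,0_B-\Psi_\phi^{-1}(0_B))=c(u,\overline{\Gamma_{\phi^{-1}}})=0$ (Lemma~\ref{questo2})—but since we are assuming $c(\phi)=c(\phi^{-1})=0$, this equivalence is exactly what is needed. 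Concretely: $c(\phi^{-1})=0$ gives $c(\mu,\overline{\Gamma_\phi})=0$ hence $c(1,\Gamma_\phi)=0$ by Lemma~\ref{inv_leg}(iii), and combined with $c(\phi)=c(\mu,\Gamma_\phi)=0$ this forces $\Gamma_\phi=0\text{-section}$ by Lemma~\ref{inv_leg}(iv), i.e.\ $\phi=\text{id}$.

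For (iii), the subadditivity inequality $c(\phi\psi)\le c(\phi)+c(\psi)$ of the symplectic case is not available, but its consequence for the zero set is. I would run the same chain as in Proposition~\ref{inv_sympl}(iii): from $\Gamma_{\phi\psi}=\Psi_\phi(\Gamma_\psi)$ and Lemma~\ref{inv_leg}(ii), $0=c(\psi)=c(\mu\cup 1,\Gamma_\psi)=c(\mu\cup 1,\Psi_{\phi^{-1}}(\Gamma_{\phi\psi}))$, and then invoke Lemma~\ref{questo2} (with $\Psi=\Psi_{\phi^{-1}}$) to get $c(\mu\cup 1,\Gamma_{\phi\psi}-\Psi_\phi(0_B))=0$; combining with Lemma~\ref{inv_leg}(ii), (iii) and $c(\phi)=0$ (so $c(1,\overline{\Gamma_\phi})=-c(1,\Gamma_\phi)$ and $c(\mu,\Gamma_\phi)=0$) pins down $c(\phi\psi)=c(\mu,\Gamma_{\phi\psi})=0$, using (i) for one inequality. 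The main delicacy here is keeping track of signs and of the fact that Lemma~\ref{questo2} only transfers the statement ``$=0$'', not the numerical value—so one must carefully ensure every intermediate quantity one wishes to move across $\Psi$ is known to vanish, which it does under the hypothesis $c(\phi)=c(\psi)=0$ together with (i).

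For (iv), I would argue exactly as in Proposition~\ref{inv_sympl}(v), now using Proposition~\ref{parordcont} in place of Proposition~\ref{parordsymp}: since $\phi_1\le\phi_2$, there are generating functions $S_1\le S_2$ for $\Gamma_{\phi_1},\Gamma_{\phi_2}$ on a common $E$, so the sublevel sets satisfy $(E^a)_{S_2}\subset (E^a)_{S_1}$ for every $a$; the induced maps on $H^\ast(E^a,E^{-\infty})$ then give $i_a^{\,\ast}(u)\neq 0$ for $S_2$ whenever it is nonzero for $S_1$, hence $c(u,\Gamma_{\phi_1})\le c(u,\Gamma_{\phi_2})$ for every $u$, and in particular $c(\phi_1)\le c(\phi_2)$. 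I expect (iii) to be the main obstacle, precisely because the natural symplectic proof uses the quantitative triangle inequality from Lemma~\ref{inv_lagr}(v) which has no quantitative contact analogue; the resolution is to observe that under the hypothesis $c(\phi)=c(\psi)=0$ only the qualitative statement is needed, so Lemma~\ref{questo2} suffices.
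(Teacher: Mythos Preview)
Your proposal is correct and follows essentially the same approach as the paper's proof: each item is reduced to the corresponding property of the Legendrian invariants (Lemma~\ref{inv_leg}) together with Lemma~\ref{questo2} as the substitute for Lemma~\ref{inv_lagr}(v), and item (iv) via Proposition~\ref{parordcont}. The only minor slip is in (ii), where the equivalence you state (with $\Psi=\Psi_\phi$) is the one for $\phi$, while the step ``$c(\phi^{-1})=0$ gives $c(\mu,\overline{\Gamma_\phi})=0$'' uses the same equivalence with $\phi$ replaced by $\phi^{-1}$ (equivalently, apply Lemma~\ref{questo2} with $\Psi=\Psi_{\phi^{-1}}$, as the paper does); this is harmless by symmetry.
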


\begin{proof}
\begin{enumerate}
\renewcommand{\labelenumi}{(\roman{enumi})}
\item 
As in the symplectic case we have $c(1,\overline{\Gamma_{\phi}})\leq 0$ for all $\phi$. Thus by Lemma \ref{inv_leg}(iii) it holds that $c(\phi)=c(\mu,\Gamma_{\phi})=-c(1,\overline{\Gamma_{\phi}})\geq 0$.

\item 
Note first that, for all $u$, if $c(u,\Gamma_{\phi^{-1}})=0$ then also $c(u,\overline{\Gamma_{\phi}})=0$ (apply Lemma \ref{questo2} to $L=0_B$ and $\Psi=\Psi_{\phi^{-1}}$). Using this, the result then follows from Lemma \ref{inv_leg}(iii)(iv).

\item 
We have $c(\mu, \Psi_{\phi^{-1}}(\Gamma_{\phi\psi}))=c(\mu,\Gamma_{\psi})=0$. Thus, by Lemma \ref{questo2} and Lemma \ref{inv_leg}(ii),
$$0=c(\mu, \Gamma_{\phi\psi}-\Psi_{\phi}(0_B))=c(\mu, \Gamma_{\phi\psi}-\Gamma_{\phi})
\geq c(\mu, \Gamma_{\phi\psi}) + c(1,\overline{\Gamma_{\phi}}).$$
Since by Lemma \ref{inv_leg}(iii) it holds $c(1,\overline{\Gamma_{\phi}})=-c(\mu,\Gamma_{\phi})=0$, we have that $c(\phi\psi)=c(\mu, \Gamma_{\phi\psi})\leq 0$, and thus $c(\phi\psi)=0$.

\item
As in the symplectic case, using Proposition \ref{parordcont}.
\end{enumerate}
\end{proof}

\noindent
Using Lemma \ref{questo4} we can prove a stronger version of Proposition \ref{inv_cont}(iii), that only holds in the 1-periodic case.

\begin{prop}\label{inv_cont_per}
For all $\phi$, $\psi$ in $\text{Cont}^{\; \; c}_{\text{1-per}} (\mathbb{R}^{2n+1})$ it holds
$$\lceil c(\phi\psi)\rceil\leq \lceil c(\phi)\rceil + \lceil c(\psi)\rceil.$$
\end{prop}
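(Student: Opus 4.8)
The plan is to imitate, almost verbatim, the proof of Proposition \ref{inv_cont}(iii), replacing the use of Lemma \ref{questo2} by the sharper Lemma \ref{questo4} and carrying the integer parts through the final inequality. So first I would record a preliminary observation needed to legitimately invoke Lemma \ref{questo4}: for $\phi\in\text{Cont}^{\; \; c}_{\text{1-per}} (\mathbb{R}^{2n+1})$ the contactomorphism $\Psi_{\phi}$, hence also $\Psi_{\phi^{-1}}=\Psi_{\phi}^{\;-1}$, descends to a contactomorphism of $J^1(S^{2n}\times S^1)$ which is $1$-periodic in the $\mathbb{R}$-coordinate of $J^1(S^{2n}\times S^1)=T^{\ast}(S^{2n}\times S^1)\times\mathbb{R}$ and is isotopic to the identity through contactomorphisms of this kind (apply $\Psi_{(\cdot)}$ to an isotopy $\phi_t$ of $\phi$ inside $\text{Cont}^{\; \; c}_{\text{1-per}} (\mathbb{R}^{2n+1})$). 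This is read off from formula (\ref{cazzuta}) together with diagram (\ref{diagram_c}): replacing $z$ by $z+1$ leaves $\phi_1,\phi_2,g$ unchanged and replaces $\phi_3$ by $\phi_3+1$, so the last entry $x\phi_2-\phi_1\phi_2+\phi_3-z$ of $\Gamma_\phi$ is unchanged under $z\mapsto z+1$ while it increases by exactly $1$ under the translation by $1$ in the $\mathbb{R}$-direction of the jet space; the analogous computation for $\Psi_\phi=\tau\circ\overline{\phi}\circ\tau^{-1}$ shows that $\Psi_\phi$ commutes with this translation. This is the step I expect to cost the most care, and it is the only point where anything beyond the formal properties of $c(u,L)$ enters.

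Granting this, the computation is a direct transcription of the symplectic case. Since $\Gamma_{\phi\psi}=\Psi_\phi(\Gamma_\psi)$ we have $\Gamma_\psi=\Psi_{\phi^{-1}}(\Gamma_{\phi\psi})$, and since $\Psi_{\phi^{-1}}^{\;-1}(0_B)=\Psi_\phi(0_B)=\Gamma_\phi$, Lemma \ref{questo4} (with $u=\mu$, $L=\Gamma_{\phi\psi}$, $\Psi=\Psi_{\phi^{-1}}$) gives
$$\lceil c(\psi)\rceil=\lceil c(\mu,\Gamma_\psi)\rceil=\lceil c\big(\mu,\Psi_{\phi^{-1}}(\Gamma_{\phi\psi})\big)\rceil=\lceil c(\mu,\Gamma_{\phi\psi}-\Gamma_\phi)\rceil.$$
Next, by Lemma \ref{inv_leg}(ii) (with $u=\mu$, $v=1$ and $\mu\cup 1=\mu$) and Lemma \ref{inv_leg}(iii) applied to $\overline{\Gamma_\phi}$, which gives $c(1,\overline{\Gamma_\phi})=-c(\mu,\Gamma_\phi)=-c(\phi)$, one obtains
$$c(\mu,\Gamma_{\phi\psi}-\Gamma_\phi)\geq c(\mu,\Gamma_{\phi\psi})+c(1,\overline{\Gamma_\phi})=c(\phi\psi)-c(\phi).$$
Now apply $\lceil\cdot\rceil$, which is monotone, and then the elementary inequality $\lceil a-b\rceil\geq\lceil a\rceil-\lceil b\rceil$ (the integer $\lceil a\rceil-\lceil b\rceil$ satisfies $\lceil a\rceil-\lceil b\rceil\leq\lceil a\rceil-b<(a-b)+1$, hence $\lceil a\rceil-\lceil b\rceil\leq\lceil a-b\rceil$): this yields
$$\lceil c(\psi)\rceil\geq\lceil c(\phi\psi)-c(\phi)\rceil\geq\lceil c(\phi\psi)\rceil-\lceil c(\phi)\rceil,$$
i.e.\ $\lceil c(\phi\psi)\rceil\leq\lceil c(\phi)\rceil+\lceil c(\psi)\rceil$, as claimed.

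In summary, the whole content beyond the periodicity bookkeeping of the first step is: (i) rewrite $c(\psi)$ via $\Gamma_\psi=\Psi_{\phi^{-1}}(\Gamma_{\phi\psi})$; (ii) move $\Psi_{\phi^{-1}}$ across using Lemma \ref{questo4}, which replaces it by subtraction of $\Gamma_\phi=\Psi_{\phi^{-1}}^{\;-1}(0_B)$ at the level of integer parts; (iii) apply the triangle-type inequality Lemma \ref{inv_leg}(ii) and the duality Lemma \ref{inv_leg}(iii); (iv) collapse the arithmetic of integer parts. The point is exactly that the $1$-periodicity upgrades Lemma \ref{questo2} to Lemma \ref{questo4}, which is what turns the symplectic inequality $c(\phi\psi)\le c(\phi)+c(\psi)$ of Proposition \ref{inv_sympl}(iii) into its subadditive-integer-part contact analogue here.
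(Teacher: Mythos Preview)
Your proof is correct and follows essentially the same route as the paper's: rewrite $c(\psi)=c(\mu,\Psi_{\phi^{-1}}(\Gamma_{\phi\psi}))$, apply Lemma \ref{questo4} to pass to $\Gamma_{\phi\psi}-\Gamma_\phi$ at the level of integer parts, then use Lemma \ref{inv_leg}(ii)--(iii) and the arithmetic inequality $\lceil a-b\rceil\geq\lceil a\rceil-\lceil b\rceil$. The only addition is your explicit check that $\Psi_\phi$ is $1$-periodic in the $\mathbb{R}$-coordinate so that Lemma \ref{questo4} indeed applies, a hypothesis the paper uses without comment.
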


\begin{proof}
We have 
$ c(\psi)=c(\mu,\Gamma_{\psi})=c\big(\mu, \Psi_{\phi^{-1}}(\Gamma_{\phi\psi})\big)$ thus by Lemma \ref{questo4} it holds
$\lceil c(\psi)\rceil=\lceil c\big(\mu,\Gamma_{\phi\psi}-\Psi_{\phi}(0_B)\big)\rceil$. But, by Lemma \ref{inv_leg}(ii)-(iii)
$$
c\big(\mu,\Gamma_{\phi\psi}-\Psi_{\phi}(0_B)\big)=
c\big(\mu\cup 1,\Gamma_{\phi\psi}-\Gamma_{\phi}\big)\geq
c\big(\mu,\Gamma_{\phi\psi}\big)+c\big(1,\overline{\Gamma_{\phi}}\big)=$$
$$
c(\phi\psi)-c\big(\mu,\Gamma_{\phi}\big)=
c(\phi\psi)-c(\phi).
$$
Thus 
$$\lceil c(\psi) \rceil \geq \lceil c(\phi\psi)-c(\phi) \rceil \geq \lceil c(\phi\psi)\rceil-\lceil c(\phi) \rceil$$
as we wanted.
\end{proof}

\noindent
In contrast with the symplectic case, $c$ is not invariant by conjugation. Recall that in the symplectic case this property follows from the fact that, for every Hamiltonian symplectomorphism $\varphi$ of $\mathbb{R}^{2n}$, $c(\varphi)$ belongs the action spectrum of $\varphi$ which is invariant by conjugation. In the contact case the situation is very different since the set of values taken by the contact action $\phi_3(q)-z$ at translated points $q=(x,y,z)$ of a contactomorphism $\phi$ of $\mathbb{R}^{2n+1}$ is not invariant by conjugation. In fact, not even the property of being a translated point is invariant by conjugation: if $q$ is a translated point for $\phi$ then in general $\psi_t(q)$ is not a translated point for $\psi_t\phi\psi_t^{\phantom{t}-1}$. However, we are going to see that this is true if $q$ is a translated point with action $0$, and in the 1-periodic case also if the action is any integer. As we will see this observation is the key to prove that, in the 1-periodic case, the integer part of $c$ is invariant by conjugation.\\
\\
Recall that a point $q$ of $\mathbb{R}^{2n+1}$ is a translated point for a contactomorphism $\phi$ if and only if $\Gamma_{\phi}(q)$ is in the intersection of $\Gamma_{\phi}$ with the 0-wall. We will say that $q$ is a \textit{non-degenerate} translated point if this intersection is transverse and thus if the corresponding critical point of the generating function of $\phi$ is non-degenerate. Note that this condition can also be expressed by requiring that there is no tangent vector $X\neq 0$ at $q$ such that $(\Gamma_{\phi})_{\ast}(X)$ is tangent to the 0-wall, or equivalently (see \cite{B}) no tangent vector $X\neq 0$ at $q$ such that $\phi_{\ast}(X)=X$ and $X(g)=0$.

\begin{lemma}[\cite{B}]\label{uffa}
Let $\phi$ and $\psi$ be contactomorphisms of $\mathbb{R}^{2n+1}$. Then $q\in\mathbb{R}^{2n+1}$ is a translated point of $\phi$ with contact action $0$ if and only if $\psi(q)$ is a translated point of $\psi\phi\psi^{-1}$ with contact action $0$. Moreover, $q$ is non-degenerate if and only if so is $\psi(q)$.
\end{lemma}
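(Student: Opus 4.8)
The plan is to reduce both assertions to pointwise identities for $\phi$, $\psi$ and their conformal factors, and then verify them by a short computation; this is essentially the argument of \cite{B}. Write $\phi^{\ast}(dz-ydx)=e^{g}(dz-ydx)$ and $\psi^{\ast}(dz-ydx)=e^{h}(dz-ydx)$. By the definition of translated point recalled above, a point $q=(x,y,z)$ is a translated point of $\phi$ with contact action $0$ precisely when $\phi(q)=q$ and $g(q)=0$; these are two independent conditions, since e.g. the contact scaling $(x,y,z)\mapsto(\alpha x,\alpha y,\alpha^{2}z)$ fixes the origin but has conformal factor $2\log\alpha$ there. Writing $e^{G}$ for the conformal factor of $\psi\phi\psi^{-1}$, the first assertion then says that ``$\phi(q)=q$ and $g(q)=0$'' is equivalent to ``$(\psi\phi\psi^{-1})(\psi(q))=\psi(q)$ and $G(\psi(q))=0$''.

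First I would record the cocycle formula for $G$. Using $(a\circ b)^{\ast}=b^{\ast}a^{\ast}$ and the identity $(\psi^{-1})^{\ast}(dz-ydx)=e^{-h\circ\psi^{-1}}(dz-ydx)$ one computes
\[
(\psi\phi\psi^{-1})^{\ast}(dz-ydx)=e^{G}(dz-ydx),\qquad G=(h\circ\phi+g-h)\circ\psi^{-1},
\]
so that at $p:=\psi(q)$ one has $G(p)=h(\phi(q))+g(q)-h(q)$. Since $\psi$ is a diffeomorphism, $(\psi\phi\psi^{-1})(p)=p$ is equivalent to $\phi(q)=q$; and when $\phi(q)=q$ this forces $G(p)=h(q)+g(q)-h(q)=g(q)$, so $G(p)=0\iff g(q)=0$. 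Hence $p$ is a translated point of $\psi\phi\psi^{-1}$ with action $0$ if and only if $q$ is a translated point of $\phi$ with action $0$, which is the first assertion.

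For the non-degeneracy part I would use the criterion quoted just before the statement: at a translated point $q$ of $\phi$ with action $0$, non-degeneracy means there is no nonzero $X\in T_{q}\mathbb{R}^{2n+1}$ with $\phi_{\ast}X=X$ and $X(g)=0$. I claim the linear isomorphism $X\mapsto Y:=\psi_{\ast}X$ from $T_{q}\mathbb{R}^{2n+1}$ to $T_{p}\mathbb{R}^{2n+1}$ matches up the ``bad'' vectors on the two sides. Indeed, by the chain rule $(\psi\phi\psi^{-1})_{\ast}Y=\psi_{\ast}\phi_{\ast}X$, hence $(\psi\phi\psi^{-1})_{\ast}Y=Y$ if and only if $\phi_{\ast}X=X$. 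For the other condition, $G\circ\psi=h\circ\phi+g-h$, so
\[
Y(G)=X(G\circ\psi)=(dh)_{q}(\phi_{\ast}X)+X(g)-(dh)_{q}(X),
\]
where $\phi(q)=q$ was used in the first term; when $\phi_{\ast}X=X$ the two $(dh)_{q}$-terms cancel and $Y(G)=X(g)$. Therefore a nonzero $Y$ with $(\psi\phi\psi^{-1})_{\ast}Y=Y$ and $Y(G)=0$ exists if and only if a nonzero $X$ with $\phi_{\ast}X=X$ and $X(g)=0$ does, i.e. $p$ is non-degenerate iff $q$ is.

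I do not expect a genuine obstacle here. The only delicate bookkeeping is keeping track of where functions are evaluated in the cocycle formula, and noticing that the cancellation of the two $(dh)_{q}$-terms is exactly where the hypothesis $\phi(q)=q$ (equivalently, action $0$ rather than merely ``translated point'') is used — which is the conceptual reason why the value $0$ (and, in the $1$-periodic setting, the integers) play a distinguished role. One could instead argue on the Legendrian side, using $\Gamma_{\psi\phi\psi^{-1}}=\Psi_{\psi}\circ\Psi_{\phi}\circ\Psi_{\psi}^{-1}(\text{$0$-section})$ and checking that transverse intersections of $\Gamma_{\phi}$ with the $0$-section (not merely the $0$-wall) go to transverse intersections of $\Gamma_{\psi\phi\psi^{-1}}$ with the $0$-section, but the computation above is shorter and self-contained.
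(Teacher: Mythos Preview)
Your proof is correct and follows essentially the same approach as the paper's: both compute the cocycle identity $G=(h\circ\phi+g-h)\circ\psi^{-1}$ for the conformal factor of $\psi\phi\psi^{-1}$, use it to match up the conditions $\phi(q)=q$, $g(q)=0$ with $(\psi\phi\psi^{-1})(\psi(q))=\psi(q)$, $G(\psi(q))=0$, and then verify the non-degeneracy criterion by showing $\psi_{\ast}$ carries vectors with $\phi_{\ast}X=X$ and $X(g)=0$ to vectors with the analogous properties for $\psi\phi\psi^{-1}$. Your write-up is slightly more explicit about the bijection in the non-degeneracy step (the paper only records the ``degenerate $\Rightarrow$ degenerate'' direction, relying on symmetry for the converse), but the argument is the same.
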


\begin{proof}
Note first that if $\phi^{\ast}(dz-ydx)=e^g(dz-ydx)$ and $\psi^{\ast}(dz-ydx)=e^f(dz-ydx)$ then
$(\psi\phi\psi^{-1})^{\ast}(dz-ydx)=e^h(dz-ydx)$ with $h=f\circ\phi\circ\psi^{-1}+g\circ\psi^{-1}-f\circ\psi^{-1}$. Suppose that $q$ is a translated point of $\phi$ with contact action $0$, i.e. $\phi(q)=q$ and $g(q)=0$. Then $\psi\phi\psi^{-1}\big(\psi(q)\big)=\psi(q)$ and $h\big(\psi(q)\big)=f\big(\phi(q)\big)+g(q)-f(q)=0$ so that $\psi(q)$ is a translated point of $\psi\phi\psi^{-1}$ with contact action $0$. To prove the last statement we will show that if $q$ is a degenerate translated point then so is $\psi(q)$. By the discussion above, if $q$ is a degenerate translated point for $\phi$ then there is a tangent vector $X\neq 0$ at $q$ such that $\phi_{\ast}(X)=X$ and $X(g)=0$. But then
$$
\big(\psi\phi\psi^{-1}\big)_{\ast}\,\big(\psi_{\ast}(X)\big)=\psi_{\ast}(X)
$$
and
\begin{eqnarray*}
\psi_{\ast}(X)(h)&=&X(f\circ\phi+g-f)=X(f\circ\phi)+X(g)-X(f)\\
&=&\phi_{\ast}(X)(f)-X(f)=0
\end{eqnarray*}
thus $\psi(q)$ is a degenerate translated point for $\psi\phi\psi^{-1}$.
\end{proof}

\noindent
We now give the 1-periodic version of the previous lemma.

\begin{lemma}\label{uffa2}
Let $\phi$ and $\psi$ be 1-periodic contactomorphisms of $\mathbb{R}^{2n+1}$, and $k$ an integer. Then $q\in\mathbb{R}^{2n+1}$ is a translated point of $\phi$ with contact action $k$ if and only if $\psi(q)$ is a translated point of $\psi\phi\psi^{-1}$ with contact action $k$. Moreover, $q$ is non-degenerate if and only if so is $\psi(q)$.
\end{lemma}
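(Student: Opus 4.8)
The plan is to deduce the statement from the already-established case $k=0$, namely Lemma~\ref{uffa}, by conjugating with a translation in the $z$-direction. Write $T_c$ for the contactomorphism $(x,y,z)\mapsto(x,y,z+c)$ of $\mathbb{R}^{2n+1}$; it is strict, since $T_c^{\,\ast}(dz-ydx)=dz-ydx$. The first observation I would make is that if $\phi$ has conformal factor $g$, i.e.\ $\phi^{\ast}(dz-ydx)=e^{g}(dz-ydx)$, then $T_{-k}\circ\phi$ has the \emph{same} conformal factor $g$, because $T_{-k}$ is strict. Consequently $q=(x,y,z)$ is a translated point of $\phi$ with contact action $k$ --- that is, $\phi_1(q)=x$, $\phi_2(q)=y$, $g(q)=0$ and $\phi_3(q)-z=k$ --- if and only if $T_{-k}\phi$ fixes $q$ and $g(q)=0$, i.e.\ if and only if $q$ is a translated point of $T_{-k}\phi$ with contact action $0$. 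Moreover, since the non-degeneracy criterion recalled before Lemma~\ref{uffa} only involves $\phi_{\ast}$ and $g$, and $(T_{-k})_{\ast}=\text{id}$, the point $q$ is non-degenerate as a translated point of $\phi$ precisely when it is non-degenerate as a translated point of $T_{-k}\phi$.

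Next I would exploit the $1$-periodicity of $\psi$: this says exactly that $\psi\circ T_1=T_1\circ\psi$, hence $\psi\circ T_m=T_m\circ\psi$ for every integer $m$, and therefore
$$\psi\,(T_{-k}\phi)\,\psi^{-1}=T_{-k}\,(\psi\phi\psi^{-1}).$$
Note also that $T_{-k}\phi$ is again $1$-periodic. Now apply Lemma~\ref{uffa} to the pair $T_{-k}\phi$, $\psi$: it yields that $q$ is a translated point of $T_{-k}\phi$ with contact action $0$ if and only if $\psi(q)$ is a translated point of $\psi(T_{-k}\phi)\psi^{-1}=T_{-k}(\psi\phi\psi^{-1})$ with contact action $0$, and that $q$ is non-degenerate if and only if $\psi(q)$ is. Finally, applying the first paragraph again, this time to the contactomorphism $\chi:=\psi\phi\psi^{-1}$, rephrases ``$\psi(q)$ is a translated point of $T_{-k}\chi$ with contact action $0$'' as ``$\psi(q)$ is a translated point of $\chi$ with contact action $k$'', with the same non-degeneracy. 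Chaining the three equivalences gives the lemma.

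I do not anticipate a genuine obstacle here: the whole argument is a bookkeeping reduction to Lemma~\ref{uffa}, and the only points that demand care are that $T_c$ is strict (so that $\phi\mapsto T_{-k}\phi$ leaves the conformal factor untouched), that $1$-periodicity of $\psi$ is precisely the statement that $\psi$ commutes with integer $z$-translations, and that the criterion ``no $X\neq0$ at $q$ with $\phi_{\ast}(X)=X$ and $X(g)=0$'' is visibly unchanged when $\phi$ is replaced by $T_{-k}\phi$.
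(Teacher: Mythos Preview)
Your proof is correct, but it takes a different route from the paper. The paper proves the lemma directly, essentially by redoing the computation of Lemma~\ref{uffa} with $\phi(q)=q+(0,0,k)$ in place of $\phi(q)=q$: one checks $\psi\phi\psi^{-1}\big(\psi(q)\big)=\psi\big(q+(0,0,k)\big)=\psi(q)+(0,0,k)$ by $1$-periodicity of $\psi$, and $h\big(\psi(q)\big)=f\big(q+(0,0,k)\big)+g(q)-f(q)=0$ by $1$-periodicity of $f$; non-degeneracy is handled exactly as before. Your approach instead factors the problem through the translation $T_{-k}$, reducing to the $k=0$ case already established. This is a clean reduction and has the conceptual advantage of making explicit \emph{why} integers are special: they are precisely the $z$-translations that commute with $1$-periodic contactomorphisms. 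The paper's direct computation is marginally shorter but hides this structural point inside the arithmetic.
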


\begin{proof}
The same proof as in Lemma \ref{uffa} goes through in this situation, due to the 1-periodicity of $\psi$ and the fact that $k$ is an integer. Suppose indeed that $q$ is a translated point of $\phi$ with contact action $k$, i.e. $\phi(q)=q+(0,0,k)$ and $g(q)=0$. Then
$\psi\phi\psi^{-1}\big(\psi(q)\big)=\psi\big(\phi(q)\big)=\psi\big(q+(0,0,k)\big)=\psi(q)+(0,0,k)$
and 
$$h\big(\psi(q)\big)=f\big(\phi(q)\big)+g(q)-f(q)=f\big(q+(0,0,k)\big)+g(q)-f(q)=0$$ 
(note that $f$ is invariant by integer translation in the $z$-coordinate since $\psi$ is 1-periodic), thus $\psi(q)$ is a translated point of $\psi\phi\psi^{-1}$ with contact action $k$. The statement about the non-degeneracy can be seen as in the proof of Lemma \ref{uffa}.
\end{proof}

\noindent
The above lemma is the key to prove the following crucial result.

\begin{lemma}\label{crucial}
Consider a contactomorphism $\phi$ and a contact isotopy $\psi_t$ in $\text{Cont}^{\; \; c}_{\text{1-per}} (\mathbb{R}^{2n+1})$ and let $S_t:E\rightarrow \mathbb{R}$ be a 1-parameter family of generating functions for the conjugation $\psi_t\phi\psi_t^{\phantom{t}-1}$.
If $k$ is an integer and $c_t$ is a path of critical values of $S_t$ with $c_{t_0}=k$ for some $t_0\in\mathbb{R}$, then $c_t=k$ for all $t$.
\end{lemma}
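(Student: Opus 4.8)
The plan is to show that the set $A=\{\,t\,|\,c_t=k\,\}$ is nonempty, open and closed on the domain of the parameter $t$ (an interval, hence connected), which forces $A$ to be everything. Nonemptiness is the hypothesis $t_0\in A$, and closedness is immediate from the continuity of the path $c_t$. Thus everything reduces to openness, and here the strategy is exactly the one used in the proofs of Lemmas \ref{questo2} and \ref{questo4}: given $t_1\in A$, I would upgrade a single critical point of $S_{t_1}$ at level $k$ to a whole $1$-parameter family of critical points of $S_t$ at level $k$, and then invoke Morse theory to conclude that the prescribed path $c_t$ agrees with $k$ near $t_1$.

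In detail, fix $t_1\in A$ and choose a path $x_t$ of critical points of $S_t$ with $S_t(x_t)=c_t$ on a small interval around $t_1$ (such a path exists since the $S_t$ form a continuous family of special g.f.q.i., as in the proof of Lemma \ref{questo2}). By Lemma \ref{crucialcont}, $x_{t_1}$ corresponds to a translated point $q_{t_1}$ of $\psi_{t_1}\phi\psi_{t_1}^{\,-1}$ whose contact action equals $c_{t_1}=k$. Applying Lemma \ref{uffa2} to the $1$-periodic contactomorphisms $\phi$ and $\psi_{t_1}$ and the integer $k$, the point $\widetilde q:=\psi_{t_1}^{\,-1}(q_{t_1})$ is then a translated point of $\phi$ with contact action $k$. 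Applying Lemma \ref{uffa2} once more, now with $\psi_t$ in place of $\psi_{t_1}$, the point $\psi_t(\widetilde q)$ is a translated point of $\psi_t\phi\psi_t^{\,-1}$ with contact action $k$ for \emph{every} $t$, and by the non-degeneracy clause of that lemma $\psi_t(\widetilde q)$ is non-degenerate if and only if $x_{t_1}$ is. Via Lemma \ref{crucialcont} this family of translated points corresponds to a path $y_t$ of critical points of $S_t$, all with critical value exactly $k$, and with $y_{t_1}=x_{t_1}$.

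Suppose first that $x_{t_1}$ is non-degenerate. Then all the $y_t$ are non-degenerate, and since the $S_t$ are quadratic at infinity the usual continuation argument of Morse theory applies: a non-degenerate critical point extends to a unique continuous path of critical points. Hence $x_t$ and $y_t$, which coincide at $t_1$, coincide for $t$ near $t_1$, so $c_t=S_t(x_t)=k$ on a neighbourhood of $t_1$; this gives openness of $A$. The degenerate case is then disposed of by the same approximation argument used in \cite{B} for Lemma \ref{questo2} (perturbing $\phi$ so that its translated points become non-degenerate and passing to the limit), and I expect this reduction from the non-degenerate to the general case to be the only genuinely delicate point of the proof — the rest is a direct combination of Lemmas \ref{crucialcont} and \ref{uffa2} with the open-closed argument. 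Note that integrality of $k$ is used exactly once, but essentially: it is what makes Lemma \ref{uffa2} (rather than merely Lemma \ref{uffa}) available and hence what produces the $t$-independent level $k$.
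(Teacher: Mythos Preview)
Your proof is correct and follows essentially the same approach as the paper: the paper also picks a path $x_t$ of critical points near $t_0$, uses Lemma~\ref{uffa2} to produce the family $y_t$ of non-degenerate critical points at level $k$ via $\psi_t\bigl(\psi_{t_0}^{-1}(q_{t_0})\bigr)$, and then invokes the Morse-theory continuation argument from the proof of Lemma~\ref{questo2} together with the approximation step from \cite{B} for the degenerate case. Your explicit open--closed framing and the two-step application of Lemma~\ref{uffa2} (back to $\phi$, then forward to $\psi_t\phi\psi_t^{-1}$) are cosmetic repackagings of the same argument.
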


\begin{proof}
Suppose that $c_t$ is a path of critical values of $S_t$ with $c_{t_0}=k$ for some $t_0$. Let $x_t=(q_t,\xi_t)\in\mathbb{R}^{2n+1}\times\mathbb{R}^N$ be a 1-parameter family of critical points of $S_t$, for $t$ in some subinterval of $[0,1]$ containing $t_0$. Following the model of the proof of Lemma \ref{questo2}, the result follows if we construct a path $y_t$ in $E$ such that $y_{t_0}=(q_{t_0},\xi_{t_0})$ and every $y_t$ is a (non-degenerate) critical point of $S_t$ with critical value $k$ (assuming that $x_t$ is non-degenerate). We know that $q_{t_0}$ is a non-degenerate translated point for $\psi_{t_0}\phi\psi_{t_0}^{\phantom{t_0}-1}$ with action $c_{t_0}=k$. By Lemma \ref{uffa2} it follows that $\psi_t\big(\psi_{t_0}^{\phantom{t_0}-1}(q_{t_0})\big)$ is a path of non-degenerate translated points for $\psi_t\phi\psi_t^{-1}$, all with action $k$. Thus $y_t:=i_{S_t^{\phantom{t}-1}}\Big(\psi_t\big(\psi_{t_0}^{\phantom{t_0}-1}(q_{t_0})\big),0,k\Big)$ is the desired path of critical points of $S_t$.
\end{proof}

\noindent
Lemma \ref{crucial} immediately implies that in the 1-periodic case the integer part of $c$ is invariant by conjugation, as stated in the following proposition. As we will see, this result will allow us to define in \ref{contact_cap} an integral invariant for domains in $\mathbb{R}^{2n}\times S^1$.

\begin{prop}\label{conj_per}
For any $\phi$, $\psi$ in $\text{Cont}^{\; \; c}_{\text{1-per}} (\mathbb{R}^{2n+1})$ it holds
$$
\lceil c(\phi)\rceil=\lceil c(\psi\phi\psi^{-1})\rceil.
$$
\end{prop}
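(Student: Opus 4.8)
The plan is to follow the template of the proof of Proposition~\ref{inv_sympl}(iv), substituting for the unavailable statement ``the full set of critical values is conjugation-invariant'' its surrogate Lemma~\ref{crucial}. Write $\psi$ as the time-$1$ map of a contact isotopy $\psi_t$, $t\in[0,1]$, in $\text{Cont}^{\; \; c}_{\text{1-per}} (\mathbb{R}^{2n+1})$ with $\psi_0=\mathrm{id}$, and put $c_t:=c(\psi_t\phi\psi_t^{-1})=c(\mu,\Gamma_{\psi_t\phi\psi_t^{-1}})$, so that $c_0=c(\phi)$ and $c_1=c(\psi\phi\psi^{-1})$. Using the composition law $\Psi_{\phi_1}\circ\Psi_{\phi_2}=\Psi_{\phi_1\phi_2}$ one sees that $\Gamma_{\psi_t\phi\psi_t^{-1}}=\big(\Psi_{\psi_t\phi\psi_t^{-1}}\circ\Psi_\phi^{-1}\big)(\Gamma_\phi)$, and that $t\mapsto\Psi_{\psi_t\phi\psi_t^{-1}}\circ\Psi_\phi^{-1}$ is a contact isotopy starting at the identity; extending it to a global contact isotopy of $J^1(S^{2n}\times S^1)$ as in~\ref{contactomorphisms} and applying Theorem~\ref{eugfcont}, we obtain a continuous $1$-parameter family of g.f.q.i. $S_t:E\to\mathbb{R}$, each generating $\Gamma_{\psi_t\phi\psi_t^{-1}}$. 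As recalled in~\ref{inv_contactomorphisms}, $c_t$ is a critical value of $S_t$, and by Lemma~\ref{inv_leg}(i) the map $t\mapsto c_t$ is continuous; hence $c_t$ is a continuous path of critical values of the family $S_t$ --- precisely the input for Lemma~\ref{crucial}.

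Next I would argue by contradiction. Suppose $\lceil c_0\rceil\neq\lceil c_1\rceil$. The image $\{c_t:t\in[0,1]\}$ is a connected subset of $\mathbb{R}$, hence an interval; an interval containing two reals with different integer parts must contain an integer $k$, so $c_{t_0}=k$ for some $t_0\in[0,1]$. Lemma~\ref{crucial} then forces $c_t=k$ for all $t$, whence $\lceil c_0\rceil=k=\lceil c_1\rceil$, contradicting our assumption. Therefore $\lceil c(\phi)\rceil=\lceil c(\psi\phi\psi^{-1})\rceil$.

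I expect no serious obstacle: all the geometric content --- the invariance of translated points of integer contact action under conjugation (Lemma~\ref{uffa2}) and the resulting rigidity of integer critical values along a conjugation family (Lemma~\ref{crucial}) --- is already in place. The only step demanding (mild) care is the elementary topological remark above: a continuous path of critical values whose ceiling is non-constant must actually \emph{attain} an integer value, which is exactly the hypothesis under which Lemma~\ref{crucial} applies and pins the path down.
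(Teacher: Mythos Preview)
Your proposal is correct and is precisely the argument the paper has in mind: the paper simply states that ``Lemma~\ref{crucial} immediately implies'' Proposition~\ref{conj_per}, and you have spelled out exactly that implication --- set up the continuous path $c_t=c(\psi_t\phi\psi_t^{-1})$ of critical values, observe that a change of ceiling forces the path to hit an integer, and invoke Lemma~\ref{crucial} to pin it there.
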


\noindent
In the case of $\text{Cont}^c\,(\mathbb{R}^{2n+1})$ only the following weaker statement is true.

\begin{prop}[\cite{B}]\label{conj_cont}
For any $\phi$, $\psi$ in $\text{Cont}_0^{\phantom{0}c}\,(\mathbb{R}^{2n+1})$ we have that $c(\phi)=0$ if and only if $c(\psi\phi\psi^{-1})=0$.
\end{prop}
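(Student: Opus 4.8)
The plan is to run the argument behind Lemma \ref{crucial} and Proposition \ref{conj_per}, but keeping track only of the value $0$ and invoking Lemma \ref{uffa} in place of its $1$-periodic strengthening Lemma \ref{uffa2}; no periodicity is then needed. Since $\psi\in\text{Cont}_0^{\phantom{0}c}\,(\mathbb{R}^{2n+1})$, pick a contact isotopy $\psi_t$, $t\in[0,1]$, with $\psi_0=\text{id}$, $\psi_1=\psi$ and all $\psi_t\in\text{Cont}_0^{\phantom{0}c}\,(\mathbb{R}^{2n+1})$. Then $t\mapsto\psi_t\phi\psi_t^{-1}$ is a path in $\text{Cont}_0^{\phantom{0}c}\,(\mathbb{R}^{2n+1})$ from $\phi$ to $\psi\phi\psi^{-1}$, so $\Gamma_{\psi_t\phi\psi_t^{-1}}$ is a contact isotopy of Legendrian submanifolds of $J^1S^{2n+1}$ starting at $\Gamma_\phi$; by Theorem \ref{eugfcont} it carries a continuous $1$-parameter family of generating functions $S_t:E\longrightarrow\mathbb{R}$. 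Put $c_t:=c(\psi_t\phi\psi_t^{-1})=c(\mu,\Gamma_{\psi_t\phi\psi_t^{-1}})$. Each $c_t$ is a critical value of $S_t$, and by Lemma \ref{inv_leg}(i) together with Theorem \ref{eugfcont} the map $t\mapsto c_t$ is continuous.

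Everything reduces to the following claim, which is the $k=0$ case of Lemma \ref{crucial} but with the $1$-periodicity hypothesis dropped: if $c_{t_0}=0$ for some $t_0\in[0,1]$, then $c_t=0$ for all $t$. Granting it, the proposition is immediate, since taking $t_0=0$ gives $c(\phi)=0\Rightarrow c(\psi\phi\psi^{-1})=c_1=0$, and taking $t_0=1$ gives the reverse implication.

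To prove the claim I would follow the proof of Lemma \ref{crucial} (itself modelled on that of Lemma \ref{questo2}). Choose a $1$-parameter family $x_t=(q_t,\xi_t)$ of critical points of $S_t$ with $S_t(x_t)=c_t$, defined for $t$ near $t_0$; after the approximation argument of \cite{B} we may assume $x_{t_0}$ non-degenerate, so by Lemma \ref{crucialcont} $q_{t_0}$ is a non-degenerate translated point of $\psi_{t_0}\phi\psi_{t_0}^{-1}$ with contact action equal to the critical value $c_{t_0}=0$, and $x_{t_0}=i_{S_{t_0}}^{-1}(q_{t_0},0,0)$. Writing $\psi_t\phi\psi_t^{-1}=(\psi_t\psi_{t_0}^{-1})(\psi_{t_0}\phi\psi_{t_0}^{-1})(\psi_t\psi_{t_0}^{-1})^{-1}$ and applying Lemma \ref{uffa} with conjugating contactomorphism $\psi_t\psi_{t_0}^{-1}$, we get that $q'_t:=\psi_t\psi_{t_0}^{-1}(q_{t_0})$ is, for every $t$, a non-degenerate translated point of $\psi_t\phi\psi_t^{-1}$ with contact action $0$. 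By Lemma \ref{crucialcont} this yields, for every $t$, a non-degenerate critical point $y_t:=i_{S_t}^{-1}(q'_t,0,0)$ of $S_t$ with critical value $0$, and $y_{t_0}=x_{t_0}$ because $q'_{t_0}=q_{t_0}$. By uniqueness of the Morse-theoretic continuation of non-degenerate critical points, $x_t=y_t$ near $t_0$, hence $c_t=S_t(y_t)=0$ on a neighbourhood of $t_0$. Therefore $\{\,t\in[0,1]:c_t=0\,\}$ is open; it is closed by continuity of $t\mapsto c_t$ and nonempty by hypothesis, so it equals $[0,1]$.

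The only delicate point — and the reason this is genuinely Bhupal's theorem rather than a one-line corollary — is the degenerate case: if the intersection of $\Gamma_{\psi_{t_0}\phi\psi_{t_0}^{-1}}$ with the $0$-section at $q_{t_0}$ is not transverse, the continuation argument above breaks down and one must instead perturb, exactly as in the approximation argument invoked at the end of the proof of Lemma \ref{questo2}. It is worth stressing where the absence of periodicity is harmless: in Lemmas \ref{uffa2} and \ref{crucial} the conjugation identity for translated points with action $k$ used $1$-periodicity only to ensure that the conformal factor of $\psi$ is invariant under the integer translation $z\mapsto z+k$; for $k=0$ no translation occurs, and Lemma \ref{uffa} holds for all contactomorphisms.
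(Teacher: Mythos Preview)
Your proposal is correct and follows exactly the approach of the paper: set $c_t=c(\psi_t\phi\psi_t^{-1})$ and argue, as in the proof of Lemma~\ref{crucial} but invoking Lemma~\ref{uffa} instead of Lemma~\ref{uffa2}, that $c_{t_0}=0$ forces $c_t=0$ for all $t$. The paper's own proof is just this two-sentence sketch; you have simply expanded the details that the paper leaves implicit.
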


\begin{proof}
Let $\psi$ be the time-1 map of the contact isotopy $\psi_t$ and consider $c_t=c(\psi_t\phi\psi_t^{-1})$ . As in the proof of Lemma \ref{crucial}, Lemma \ref{uffa} implies that if $c_{t_0}=0$ then $c_t=0$ for all $t$.
\end{proof}

\noindent
We end this section explaining the relation between the invariant $c$ in the symplectic and contact case.

\begin{prop}\label{lift_c}
Let $\varphi$ be a compactly supported Hamiltonian symplectomorphism of $\mathbb{R}^{2n}$ and $\widetilde{\varphi}$ its lift to $\mathbb{R}^{2n+1}$ or to $\mathbb{R}^{2n}\times S^1$. Then $c(\widetilde{\varphi})=c(\varphi)$.
\end{prop}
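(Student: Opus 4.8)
The plan is to read off $c(\widetilde\varphi)=c(\mu,\Gamma_{\widetilde\varphi})$ from the explicit generating function for $\widetilde\varphi$ supplied by Lemma \ref{lift}, and to observe that the whole algebraic-topological apparatus defining this number differs from the one defining $c(\varphi)=c(\mu,\Gamma_\varphi)$ only by an overall tensor factor $H^\ast(S^1)$, which does not move the selected critical value. Concretely, I would first fix a special g.f.q.i. $S=S_0+Q_\infty\colon S^{2n}\times\mathbb{R}^N\to\mathbb{R}$ for $\Gamma_\varphi$, which exists by Theorems \ref{existence} and \ref{uniqueness} together with the reduction to special generating functions. By Lemma \ref{lift} the function $\widetilde S(x,y,z;\xi):=S(x,y;\xi)$ generates $\widetilde\varphi$; since $S_0$ is compactly supported and constant in $z$ and $Q_\infty$ is untouched, $\widetilde S$ is again a special g.f.q.i., now over the closed base $S^{2n}\times S^1$. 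This is the base relevant to both phrasings in the statement, because $\widetilde\varphi$ is $1$-periodic in $z$ and compactly supported in $(x,y)$, so it lies in $\text{Cont}^{\; \; c}_{\text{1-per}}(\mathbb{R}^{2n+1})=\text{Cont}_0^{\phantom{0}c}(\mathbb{R}^{2n}\times S^1)$ and $\Gamma_{\widetilde\varphi}$ is viewed inside $J^1(S^{2n}\times S^1)$ either way. Observe also that $\widetilde S$ has a whole circle of critical points over each critical point of $S$, hence is not Morse; this is harmless for the definition of $c$, and it already shows that $c(\widetilde\varphi)$ and $c(\varphi)$ lie in the same discrete set of critical values of $S$, so only their equality is at issue.

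Next I would exploit the product structure. Writing $E=S^{2n}\times\mathbb{R}^N$ and $\widetilde E=E\times S^1$, the fact that $\widetilde S$ is constant along the $S^1$-factor gives, for every $a\in\mathbb{R}\cup\{\infty\}$, a decomposition of pairs $(\widetilde E^a,\widetilde E^{-\infty})=(E^a,E^{-\infty})\times S^1$. The negative subbundle of $Q_\infty$ in $\widetilde E$ is the pullback of the one in $E$, so its Thom class is pulled back and the Thom isomorphism for $\widetilde S$ is the Thom isomorphism for $S$ tensored with $\mathrm{id}_{H^\ast(S^1)}$; since $H^\ast(S^1)$ is a free module this K\"{u}nneth splitting is exact and is compatible with the excision isomorphism. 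Consequently, under the identifications $H^\ast(S^{2n}\times S^1)\cong H^\ast(S^{2n})\otimes H^\ast(S^1)$ and $H^\ast(\widetilde E^a,\widetilde E^{-\infty})\cong H^\ast(E^a,E^{-\infty})\otimes H^\ast(S^1)$, the inclusion-induced map for $\widetilde S$ becomes $i_a^{\,\ast}\otimes\mathrm{id}$, where $i_a^{\,\ast}$ is the map used in the symplectic construction of \ref{Vit_inv}.

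Finally, the orientation class of $S^{2n}\times S^1$ is $\mu=\mu_{S^{2n}}\otimes\omega$, with $\omega$ a generator of $H^1(S^1)$ and $\mu_{S^{2n}}$ the orientation class of $S^{2n}$, so the image of $\mu$ under the map attached to $\widetilde S$ is $i_a^{\,\ast}(\mu_{S^{2n}})\otimes\omega$. Because tensoring with the free module $H^\ast(S^1)$ detects non-vanishing, this class is non-zero precisely when $i_a^{\,\ast}(\mu_{S^{2n}})\neq 0$. Taking the infimum over such $a$ in the definition of $c$ then yields
$$c(\widetilde\varphi)=c(\mu,\Gamma_{\widetilde\varphi})=c(\mu_{S^{2n}},\Gamma_\varphi)=c(\varphi),$$
as desired.

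The only genuinely technical point is the compatibility claimed in the second paragraph: one must check that every arrow in the diagram of \ref{Vit_inv} defining $c(u,L)$ — the Thom isomorphism, the excision isomorphism, and the inclusions of sublevel sets — respects the decomposition $\widetilde E=E\times S^1$, so that the whole diagram for $\widetilde S$ is literally the diagram for $S$ tensored with $H^\ast(S^1)$. This is routine naturality, but should be written out with a little care; everything else in the argument is immediate from Lemma \ref{lift}.
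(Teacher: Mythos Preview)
Your argument is correct and is essentially the same as the paper's: both start from Lemma \ref{lift}, observe that the sublevel sets factor as $(\widetilde E^a,\widetilde E^{-\infty})=(E^a,E^{-\infty})\times S^1$, use K\"unneth to write $\widetilde{i_a}^{\,\ast}=i_a^{\,\ast}\otimes\mathrm{id}$, and then conclude by noting that the orientation class of $S^{2n}\times S^1$ is $\mu_{S^{2n}}\otimes\mu_{S^1}$ so that $\widetilde{i_a}^{\,\ast}(\mu_{S^{2n}}\otimes\mu_{S^1})\neq 0$ if and only if $i_a^{\,\ast}(\mu_{S^{2n}})\neq 0$. Your extra remarks on the naturality of the Thom isomorphism under the product decomposition are a welcome bit of care that the paper leaves implicit.
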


\begin{proof}
The result follows from Lemma \ref{lift}. The case of $\mathbb{R}^{2n+1}$ is immediate, while the 1-periodic case can be seen as follows. Suppose that $\widetilde{\varphi}$ is the lift of $\varphi$ to $\mathbb{R}^{2n}\times S^1$. By Lemma \ref{lift} we know that a generating function for $\widetilde{\varphi}$ is given by $\widetilde{S}:(S^{2n}\times S^1)\times \mathbb{R}^N\rightarrow\mathbb{R}$, $\widetilde{S}(q,z;\xi)=S(q;\xi)$ where $S:S^{2n}\times \mathbb{R}^N\rightarrow\mathbb{R}$ is a generating function for $\varphi$. Denote by $\widetilde{E}^a$ the sublevel set of $\widetilde{S}$ with respect to $a$, and by $\widetilde{i_a}$ the inclusion $(\widetilde{E}^a,\widetilde{E}^{-\infty})\hookrightarrow(\widetilde{E},\widetilde{E}^{-\infty})$. Then $\widetilde{E}^a=E^a\times S^1$ and, after identifying $H^{\ast}(\widetilde{E},\widetilde{E}^{-\infty})$ with $H^{\ast}(S^{2n}\times S^1)=H^{\ast}(S^{2n})\otimes H^{\ast}(S^1)$ and $H^{\ast}(\widetilde{E}^a,\widetilde{E}^{-\infty})$ with $H^{\ast}(E^a,E^{-\infty})\otimes H^{\ast}(S^1)$, the induced map
$$
\widetilde{i_a}^{\;\ast}: H^{\ast}(S^{2n})\otimes H^{\ast}(S^1)\rightarrow H^{\ast}(E^a,E^{-\infty})\otimes H^{\ast}(S^1)
$$
is given by $\widetilde{i_a}^{\;\ast}=i_a^{\phantom{a}\ast}\otimes \text{id}$. In particular we have that $\widetilde{i_a}^{\;\ast}(\mu \otimes\mu_{S^1})=i_a^{\phantom{a}\ast}(\mu)\otimes\mu_{S^1}$ where $\mu$ and $\mu_{S^1}$ denote respectively the orientation classes of $S^{2n}$ and $S^1$, thus $\widetilde{i_a}^{\;\ast}(\mu \otimes\mu_{S^1})=0$ if and only if 
$i_a^{\phantom{a}\ast}(\mu)=0$. Since $\mu \otimes\mu_{S^1}$ is the orientation class of $H^{\ast}(S^{2n}\times S^1)$ we conclude that $c(\widetilde{\varphi})=c(\varphi)$.
\end{proof}

\subsection{The Bhupal partial order on $\text{Cont}_0^{\;c}\,(\mathbb{R}^{2n+1})$ and $\text{Cont}_0^{\;c}\,(\mathbb{R}^{2n}\times S^1)$
} \label{Bhupalorder}

Bhupal's partial order $\leq_B$ on $\text{Cont}_0^{\phantom{0}c}\,(\mathbb{R}^{2n+1})$ and on $\text{Cont}_0^{\phantom{0}c}\,(\mathbb{R}^{2n}\times S^1)$ is defined by 
$$ \phi_1 \leq_B \phi_2 \quad \text{if} \quad c(\phi_1\phi_2^{\phantom{2}-1})=0.$$
Using the properties in Proposition \ref{inv_cont} it is immediate to see that $\leq_B$ is indeed a partial order, that it is bi-invariant (i.e. if $\phi_1 \leq_B \phi_2$ and $\psi_1 \leq_B \psi_2$ then $\phi_1\psi_1 \leq_B \phi_2\psi_2$), and that if $\phi_1 \leq \phi_2$ in the sense of \ref{contactomorphisms} then $\phi_1 \leq_B \phi_2$. In particular it follows that $\leq$ is also a partial order. Note that in the language of \cite{EP} this means that $\mathbb{R}^{2n+1}$ and $\mathbb{R}^{2n}\times S^1$ are \textit{orderable} contact manifolds.

\subsection{Contact capacity of domains in $\mathbb{R}^{2n}\times S^1$}\label{contact_cap}

We will consider domains in $\mathbb{R}^{2n}\times S^1$ as domains in $\mathbb{R}^{2n+1}$ that are invariant by the action of $\mathbb{Z}$ by translations in the $z$-coordinate. For an open and bounded domain $\mathcal{V}$ of $\mathbb{R}^{2n}\times S^1$ we define the \textbf{contact capacity} of $\mathcal{V}$ as
$$c(\mathcal{V}):=\text{sup}\,\{\,\lceil c(\phi)\rceil \;|\; \phi\in\text{Cont}\,(\mathcal{V})\,\}$$ 
where $\text{Cont}\,(\mathcal{V})$ denotes the set of time-1 maps of 1-periodic contact Hamiltonian functions supported in $\mathcal{V}$. By the following lemma, $c(\mathcal{V})$ is a well-defined integer number. 

\begin{lemma}\label{questo5_c}
For every contactomorphism $\psi$ in $\text{Cont}^{\; \; c}_{\text{1-per}} (\mathbb{R}^{2n+1})$ such that $\psi(\mathcal{V})\cap\mathcal{V}= \emptyset$ we have $c(\mathcal{V})\leq \gamma(\psi)$, where $\gamma(\psi):=\lceil c(\psi)\rceil +\lceil c(\psi^{-1})\rceil$.
\end{lemma}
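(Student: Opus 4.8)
The plan is to adapt the proof of Lemma \ref{questo5} to the contact, $1$-periodic setting, replacing the symplectic action spectrum by the set of contact actions at translated points and compensating for the failure of conjugation invariance of $c$ by only being able to control \emph{integer} critical values.

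First I would check that every $\phi\in\text{Cont}(\mathcal{V})$ preserves $\mathcal{V}$. Write $\phi$ as the time-$1$ map of a contact isotopy $\phi_t$ generated by a $1$-periodic contact Hamiltonian $H_t$ supported in $\mathcal{V}$; since $\mathcal{V}$ is bounded, $\phi_t\in\text{Cont}^{\;\;c}_{\text{1-per}}(\mathbb{R}^{2n+1})$ for all $t$. A standard connectedness argument (if a trajectory ever leaves $\text{supp}(H_t)$ it stays at a fixed point, and if it lies in the complement at one time it did so at all earlier times) shows that $\phi_t$ is the identity on $\mathbb{R}^{2n+1}\setminus\text{supp}(H_t)$; being a bijection fixing the complement of the closed set $\text{supp}(H_t)\subseteq\mathcal{V}$ pointwise, $\phi_t$ maps $\text{supp}(H_t)$ onto itself, and hence $\phi_t(\mathcal{V})=\mathcal{V}$.

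Next I claim that the continuous function $t\mapsto c(\psi\phi_t)$ has constant integer part on $[0,1]$, so that $\lceil c(\psi\phi)\rceil=\lceil c(\psi)\rceil$. (Continuity follows from Lemma \ref{inv_leg}(i) and Theorem \ref{eugfcont}, taking a continuous family $S_t$ of g.f.q.i.\ for $\Gamma_{\psi\phi_t}$.) By the discussion in \ref{inv_contactomorphisms} and Lemma \ref{crucialcont}, $c(\psi\phi_t)$ is the contact action of $\psi\phi_t$ at some translated point $q_t=(x_t,y_t,z_t)$, and $i_{S_t}^{\;-1}\big(q_t,0,c(\psi\phi_t)\big)$ is a critical point of $S_t$. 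Suppose $c(\psi\phi_{t_1})=k\in\mathbb{Z}$. If $q_{t_1}\in\mathcal{V}$, then $\psi\phi_{t_1}(q_{t_1})\in\psi(\mathcal{V})$ since $\phi_{t_1}$ preserves $\mathcal{V}$; but $\psi\phi_{t_1}(q_{t_1})=q_{t_1}+(0,0,k)\in\mathcal{V}$ because $\mathcal{V}$ is invariant under integer translations in $z$, contradicting $\psi(\mathcal{V})\cap\mathcal{V}=\emptyset$. Hence $q_{t_1}\notin\mathcal{V}$, so $q_{t_1}\notin\text{supp}(H_t)$; therefore $\phi_s(q_{t_1})=q_{t_1}$ for all $s$, $\psi\phi_s$ coincides with $\psi$ near $q_{t_1}$, and $q_{t_1}$ is a translated point of $\psi$ with action $k$. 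Now one runs the critical-point continuation argument from the proof of Lemma \ref{questo2} (equivalently Lemma \ref{crucial}), with the value $0$ there replaced by $k$: the point $(q_{t_1},0,k)$ lies on $\Gamma_{\psi\phi_s}$ for all $s$ (transversally, in the non-degenerate case), so $y_s:=i_{S_s}^{\;-1}(q_{t_1},0,k)$ is a continuous family of critical points of $S_s$ with critical value $k$, and by uniqueness of continuation of a non-degenerate critical point it coincides near $t_1$ with the family realizing $c(\psi\phi_s)$, whence $c(\psi\phi_s)=k$ for $s$ near $t_1$ (the degenerate case reduces to this by the approximation argument of \cite{B}). Consequently $\{t\in[0,1]:c(\psi\phi_t)\in\mathbb{Z}\}$ is open; being also closed it is empty or all of $[0,1]$, and in either case $t\mapsto\lceil c(\psi\phi_t)\rceil$ is constant, giving $\lceil c(\psi\phi)\rceil=\lceil c(\psi)\rceil$.

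Finally, for any $\phi\in\text{Cont}(\mathcal{V})$, applying Proposition \ref{inv_cont_per} to the factorization $\phi=\psi^{-1}(\psi\phi)$ gives
\[
\lceil c(\phi)\rceil\leq\lceil c(\psi^{-1})\rceil+\lceil c(\psi\phi)\rceil=\lceil c(\psi^{-1})\rceil+\lceil c(\psi)\rceil=\gamma(\psi),
\]
and taking the supremum over $\phi\in\text{Cont}(\mathcal{V})$ yields $c(\mathcal{V})\leq\gamma(\psi)$. The main obstacle is the middle paragraph: making the Morse-theoretic continuation of critical values rigorous and disposing of degenerate translated points, which is precisely the technical core of Lemmas \ref{questo2} and \ref{crucial}; the only genuinely new ingredient here is the disjointness hypothesis $\psi(\mathcal{V})\cap\mathcal{V}=\emptyset$, which is what forces the selected translated point out of $\mathcal{V}$ — and hence to be a translated point of $\psi$, fixed by every $\phi_s$ — as soon as the corresponding critical value is an integer.
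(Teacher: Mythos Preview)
Your argument is correct and follows essentially the same route as the paper: establish $\lceil c(\psi\phi)\rceil=\lceil c(\psi)\rceil$ by showing that whenever $c(\psi\phi_{t_0})$ hits an integer $k$ the corresponding translated point must lie outside $\mathcal{V}$ (hence is a translated point of $\psi\phi_t$ for every $t$, with the same action $k$), and then conclude via the triangle inequality of Proposition~\ref{inv_cont_per}. The paper's proof is terser --- it simply refers back to Lemmas~\ref{questo5} and~\ref{crucial} --- but your added details (checking $\phi_t(\mathcal{V})=\mathcal{V}$, using $\mathbb{Z}$-invariance of $\mathcal{V}$ in the $z$-direction to derive the contradiction, and the open--closed dichotomy for $\{t:c(\psi\phi_t)\in\mathbb{Z}\}$) spell out exactly what the paper leaves implicit.
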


\begin{proof}
We will show that $\lceil c(\psi\phi)\rceil=\lceil c(\psi)\rceil$ for all $\phi$ in $\text{Cont}\,(\mathcal{V})$ and $\psi$ as in the statement of the lemma, and then conclude as in the proof of Lemma \ref{questo5}. Let $\phi=\phi_t|_{t=1}$, and consider the map $t \mapsto c(\psi\phi_t)$. Suppose $c_{t_0}=k\in\mathbb{Z}$. Then there is a translated point $q=(x,y,z)$ of $\psi\phi_{t_0}$ such that $(\psi\phi_{t_0})_3-z=k$. But then we can apply an argument similar to the one in Lemma \ref{questo5} to see that $q$ is also an almost fixed point of $\psi\phi_t$ for all $t$, with $(\psi\phi_t)_3-z=k$. We can now conclude, as in Lemma \ref{crucial}, that $c(\psi\phi_t)=k$ for all $t$. It follows that $\lceil c(\psi\phi_t)\rceil$ is independent of $t$, in particular $\lceil c(\psi\phi)\rceil=\lceil c(\psi)\rceil$.
\end{proof}

\noindent
As in the symplectic case, we can extend the definition to arbitrary domains of $\mathbb{R}^{2n}\times S^1$.

\begin{thm}\label{u}
$c$ satisfies the following properties:
\vspace{-0.2cm}
\begin{enumerate}
\renewcommand{\labelenumi}{(\roman{enumi})}
\item (Contact Invariance) For any $\psi$ in $\text{Cont}_0^{\phantom{0}c}\,(\mathbb{R}^{2n}\times S^1)$ we have $c(\psi(\mathcal{V}))=c(\mathcal{V})$.
\item (Monotonicity) If $\mathcal{V}_1\subset\mathcal{V}_2$, then $c(\mathcal{V}_1)\leq c(\mathcal{V}_2)$.
\item For any domain $\mathcal{U}$ in $\mathbb{R}^{2n}$ we have $c\,(\mathcal{U}\times S^1)=\lceil c(\mathcal{U})\rceil$.
\end{enumerate}
\end{thm}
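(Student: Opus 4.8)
The plan is to derive (i) and (ii) formally from the invariance results already in hand, and to prove (iii) as a two-sided estimate: the lower bound by lifting Hamiltonian symplectomorphisms, and the upper bound — which I regard as the heart of the statement — by combining the Bhupal partial order with the subadditivity of integer capacities. First I would dispose of (ii): if $\mathcal{V}_1\subset\mathcal{V}_2$ then $\text{Cont}(\mathcal{V}_1)\subset\text{Cont}(\mathcal{V}_2)$, so the supremum defining $c(\mathcal{V}_1)$ runs over a smaller set and $c(\mathcal{V}_1)\leq c(\mathcal{V}_2)$ for open bounded domains, the general case being immediate from the monotone sup/inf definitions. For (i), given $\phi\in\text{Cont}(\mathcal{V})$ and $\psi\in\text{Cont}_0^{\,c}\,(\mathbb{R}^{2n}\times S^1)$, the conjugate $\psi\phi\psi^{-1}$ lies in $\text{Cont}(\psi(\mathcal{V}))$ (a $1$-periodic contact Hamiltonian generating $\phi$ and supported in $\mathcal{V}$ is carried by $\psi$ to one generating $\psi\phi\psi^{-1}$ and supported in $\psi(\mathcal{V})$), and Proposition \ref{conj_per} gives $\lceil c(\psi\phi\psi^{-1})\rceil=\lceil c(\phi)\rceil$; since $\phi\mapsto\psi\phi\psi^{-1}$ is a bijection of $\text{Cont}(\mathcal{V})$ onto $\text{Cont}(\psi(\mathcal{V}))$, taking suprema yields $c(\psi(\mathcal{V}))=c(\mathcal{V})$, and one passes to general domains through the defining limits.

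For the lower bound in (iii), every $\varphi\in\text{Ham}(\mathcal{U})$ lifts to an element $\widetilde{\varphi}\in\text{Cont}(\mathcal{U}\times S^1)$, and Proposition \ref{lift_c} gives $c(\widetilde{\varphi})=c(\varphi)$; hence $c(\mathcal{U}\times S^1)\geq\lceil c(\varphi)\rceil$ for every such $\varphi$. Since the values $c(\varphi)$ have supremum $c(\mathcal{U})$ and $\lceil\,\cdot\,\rceil$ is nondecreasing and constant on each half-open interval $(m-1,m]$, one checks that $\sup_{\varphi}\lceil c(\varphi)\rceil=\lceil c(\mathcal{U})\rceil$, which is the claimed inequality.

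For the upper bound, let $\phi\in\text{Cont}(\mathcal{U}\times S^1)$ be the time-$1$ map of a $1$-periodic contact Hamiltonian $h_t$ supported in $\mathcal{U}\times S^1$. I would choose a smooth family $H_t\colon\mathbb{R}^{2n}\to\mathbb{R}$, viewed as $z$-independent functions on $\mathbb{R}^{2n}\times S^1$, compactly supported in $\mathcal{U}$, with $H_t(x,y)\geq h_t(x,y,z)$ for all $(x,y,z)$ and $t$ — possible because the support of $h_t$ is a compact subset of $\mathcal{U}\times S^1$ projecting to a compact subset of $\mathcal{U}$, so $\sup_z h_t(x,y,z)$ is continuous with compact support in $\mathcal{U}$ and can be dominated by a smooth function still supported in $\mathcal{U}$. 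Let $\widetilde{\varphi}$ be the time-$1$ map of $H_t$; it is the lift of some $\varphi\in\text{Ham}(\mathcal{U})$. Since $H_t\geq h_t$ pointwise, $\widetilde{\varphi}\phi^{-1}$ is the time-$1$ flow of a non-negative contact Hamiltonian (the standard monotonicity of contact Hamiltonian flows under pointwise domination of their Hamiltonians), so $\phi\leq\widetilde{\varphi}$ in the sense of \ref{contactomorphisms}, hence $\phi\leq_B\widetilde{\varphi}$, i.e. $c(\phi\widetilde{\varphi}^{-1})=0$. Applying Proposition \ref{inv_cont_per} to the factorization $\phi=(\phi\widetilde{\varphi}^{-1})\widetilde{\varphi}$ then gives
\[
\lceil c(\phi)\rceil\leq\lceil c(\phi\widetilde{\varphi}^{-1})\rceil+\lceil c(\widetilde{\varphi})\rceil=0+\lceil c(\varphi)\rceil\leq\lceil c(\mathcal{U})\rceil,
\]
using Proposition \ref{lift_c} and $\varphi\in\text{Ham}(\mathcal{U})$. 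Taking the supremum over $\phi$ yields $c(\mathcal{U}\times S^1)\leq\lceil c(\mathcal{U})\rceil$ for open bounded $\mathcal{U}$.

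The step I expect to be most delicate is this last construction — producing the dominating lift $\widetilde{\varphi}$ and verifying that pointwise domination of contact Hamiltonians forces domination in the partial order $\leq$ — since this is exactly where the $1$-periodicity of $\mathbb{R}^{2n}\times S^1$ enters, through Proposition \ref{conj_per} which underlies Proposition \ref{inv_cont_per}, and what makes the answer the integer part $\lceil c(\mathcal{U})\rceil$ rather than $c(\mathcal{U})$ itself. It remains to extend (iii) from open bounded $\mathcal{U}$ to general domains: for open $\mathcal{U}$ this follows from monotonicity (each bounded open $\mathcal{W}\subset\mathcal{U}\times S^1$ is contained in $\mathrm{proj}(\mathcal{W})\times S^1$ and contains products $\mathcal{U}'\times S^1$ with $\mathcal{U}'\subset\mathcal{U}$ bounded open, while $\lceil\,\cdot\,\rceil$ commutes with the relevant suprema), and for arbitrary $\mathcal{U}$ one argues as in the Viterbo case via the inf over open neighbourhoods, using a tube-lemma argument to approximate open neighbourhoods of $\mathcal{U}\times S^1$ from inside by products $\mathcal{W}\times S^1$.
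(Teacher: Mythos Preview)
Your argument is correct and follows essentially the same route as the paper's proof: (i) via Proposition~\ref{conj_per}, (ii) from the definition, and (iii) by lifting elements of $\text{Ham}(\mathcal{U})$ for the lower bound and dominating an arbitrary $\phi\in\text{Cont}(\mathcal{U}\times S^1)$ by a lift $\widetilde{\varphi}$ for the upper bound. The paper states the upper bound in one sentence (``for every $\phi$ in $\text{Cont}(\mathcal{U}\times S^1)$ there exists a $\varphi$ in $\text{Ham}(\mathcal{U})$ such that $\phi\leq\widetilde{\varphi}$''); you supply exactly the construction the paper leaves implicit.

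One minor simplification: once you have $\phi\leq\widetilde{\varphi}$, you can invoke Proposition~\ref{inv_cont}(iv) directly to get $c(\phi)\leq c(\widetilde{\varphi})=c(\varphi)$, hence $\lceil c(\phi)\rceil\leq\lceil c(\varphi)\rceil\leq\lceil c(\mathcal{U})\rceil$. Your detour through $\leq_B$ and the subadditivity of Proposition~\ref{inv_cont_per} is valid but unnecessary here; the paper intends the monotonicity route.
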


\begin{proof}
Contact invariance follows from Proposition \ref{conj_per}, and monotonicity is immediate from the definition. As for the last property, it can be seen as follows. If $\varphi$ is an Hamiltonian symplectomorphism of $\mathbb{R}^{2n}$ generated by a Hamiltonian $H:\mathbb{R}^{2n}\rightarrow \mathbb{R}$ supported in $\mathcal{U}$, then its lift $\widetilde{\varphi}$ is generated by the contact Hamiltonian $\widetilde{H}:\mathbb{R}^{2n}\times S^1\rightarrow \mathbb{R}$, $\widetilde{H}(x,y,z)=H(x.y)$ which is supported in $\mathcal{U}\times S^1$. By Proposition \ref{lift_c} we have $c(\widetilde{\varphi})=c(\varphi)$, so we see that $c\,(\mathcal{U}\times S^1)\geq\lceil c(\mathcal{U})\rceil$. Equality holds because for every $\phi$ in $\text{Cont}\,(\mathcal{U}\times S^1)$ there exists a $\varphi$ in $\text{Ham}\,(\mathcal{U})$ such that $\phi \leq \widetilde{\varphi}$.
\end{proof}

Note that the Non-Squeezing Theorem of Eliashberg, Kim and Polterovich follows immediately from Theorem \ref{u} and Example \ref{cap_ell}. Indeed, consider $R_2\leq k < R_1$ for $k \in \mathbb{Z}$ and suppose that there is a contactomorphism $\psi$ in $\text{Cont}_0^{\phantom{0}c}\,(\mathbb{R}^{2n}\times S^1)$ such that $\psi\,\big(\widehat{B(R_1)}\big)\subset\widehat{B(R_2)}$. Then by monotonicity we have $c\,\Big(\psi\,\big(\widehat{B(R_1)}\big)\Big)\leq c\,\big(\widehat{B(R_2)}\big)$. But this is impossible since $c\,\Big(\psi\,\big(\widehat{B(R_1)}\big)\Big)=c\,\big(\widehat{B(R_1)}\big)=\lceil c\,\big(B(R_1)\big)\rceil>k$
and $c\,\big(\widehat{B(R_2)}\big)=\lceil c\,\big(B(R_2)\big)\rceil\leq k$. Note that the same argument shows that if $R_2\leq k < R_1$ it is in fact not even possible to squeeze $\widehat{B(R_1)}$ into $\widehat{C(R_2)}$.

\subsection{Contact homology of domains in $\mathbb{R}^{2n}\times S^1$}\label{2.3}

In this last section we generalize to the contact case Traynor's construction of symplectic homology. Similarly to the case of the capacity, we only obtain contact invariant homology groups $G_{\ast}^{\;\;a,b}\,(\mathcal{V})$ for domains $\mathcal{V}$ in $\mathbb{R}^{2n}\times S^1$ and for \textit{integer} parameters $a$ and $b$.\\
\\
Let $\phi$ be a contactomorphism in $\text{Cont}^{\; \; c}_{\text{1-per}} (\mathbb{R}^{2n+1})$ with generating function $S:E=(S^{2n}\times S^1)\times\mathbb{R}^N\longrightarrow\mathbb{R}$. Given integer numbers $a$ and $b$ that are not critical values of $S$ and such that $-\infty<a<b\leq\infty$, we define the \textbf{$k$-th contact homology group} of $\phi$ with respect to the values $a$ and $b$ by
$$G_k^{\;\;(a,b]}\,(\phi):=H_{k+\iota}\,(E^b, E^a)$$
where $E^a$, $E^b$ denote the sublevel sets of $S$, and $\iota$ is the index of the quadratic at infinity part of $S$. By the uniqueness part in Theorem \ref{eugfcont} these groups are well-defined, i.e. do not depend on the choice of $S$.\\
\\
The following proposition follows immediately
from Lemma \ref{lift}.

\begin{prop}\label{lift2}
For any $\varphi$ in $\text{Ham}^c(\mathbb{R}^{2n})$ we have 
$$G_{\ast}^{\;\;(a,b]}\,(\widetilde{\varphi})=G_{\ast}^{\;\;(a,b]}\,(\varphi)\otimes H_{\ast}(S^1).$$
\end{prop}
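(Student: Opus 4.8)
The plan is to unwind the definitions on both sides and show that a generating function for $\widetilde{\varphi}$ computes exactly the relative homology appearing on the right. First I would invoke Lemma \ref{lift}: if $S\colon \mathbb{R}^{2n}\times\mathbb{R}^N\to\mathbb{R}$ is a (special, quadratic at infinity) generating function for $\varphi$, viewed after one-point compactification as $S\colon S^{2n}\times\mathbb{R}^N\to\mathbb{R}$, then $\widetilde{S}(x,y,z;\xi)=S(x,y;\xi)$ is a generating function for $\widetilde{\varphi}$, defined on $\widetilde{E}=(S^{2n}\times S^1)\times\mathbb{R}^N$. Since $\widetilde{S}$ does not depend on the $S^1$-coordinate $z$, the quadratic at infinity part of $\widetilde{S}$ is the same quadratic form $Q_\infty$ as that of $S$, so the index $\iota$ is unchanged; this is exactly the shift needed so that no regrading correction appears.

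The key step is the identification of sublevel sets. For any $c\in\mathbb{R}\cup\{\infty\}$ we have $\widetilde{E}^{\,c}=\{(q,z;\xi)\,|\,S(q;\xi)\le c\}=E^c\times S^1$, where $E^c$ is the corresponding sublevel set of $S$. Hence the pair $(\widetilde{E}^{\,b},\widetilde{E}^{\,a})$ is precisely $(E^b\times S^1,\,E^a\times S^1)=(E^b,E^a)\times S^1$. Then I would apply the Künneth theorem for the homology of a product of pairs: since $H_\ast(S^1)$ is free (it is $\mathbb{Z}$ in degrees $0$ and $1$, or $\mathbb{Z}_2\oplus\mathbb{Z}_2$ with $\mathbb{Z}_2$-coefficients), the Tor terms vanish and
$$H_{k+\iota}\big((E^b,E^a)\times S^1\big)\;\cong\;\bigoplus_{i+j=k+\iota}H_i(E^b,E^a)\otimes H_j(S^1).$$
Re-indexing by the shift $\iota$ on the $(E^b,E^a)$-factor, the left side is $G_k^{(a,b]}(\widetilde{\varphi})$ and the right side is $\big(G_\ast^{(a,b]}(\varphi)\otimes H_\ast(S^1)\big)_k$, which is the claimed isomorphism.

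One small point to address is that $a$ and $b$ must be regular values of $\widetilde{S}$ for the left-hand side to be defined; but the critical values of $\widetilde{S}$ coincide with those of $S$ (the fiber-critical set of $\widetilde{S}$ is that of $S$ times $S^1$, with the same Lagrange multipliers and same $\mathbb{R}$-coordinates by Lemma \ref{crucialcont} applied to $\widetilde{\varphi}$, or more directly because $\widetilde S$ is $z$-independent), so $a,b$ regular for $S$ gives $a,b$ regular for $\widetilde{S}$, consistent with the hypothesis on the symplectic side. I do not expect a serious obstacle here: the only thing to be slightly careful about is bookkeeping the index shift $\iota$ so that the tensor decomposition lands in the right degree, and making sure the Künneth theorem is applied to the relative (pair) homology rather than absolute homology — both are routine. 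The statement and its proof are genuinely just "the generating function is a pullback under the projection $S^{2n}\times S^1\to S^{2n}$, so everything splits off an $H_\ast(S^1)$ factor."
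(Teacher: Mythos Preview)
Your proof is correct and is exactly the argument the paper has in mind: the paper's own proof is simply the sentence ``follows immediately from Lemma \ref{lift}'', and you have written out the implicit K\"unneth computation (sublevel sets split as $E^c\times S^1$, same index $\iota$, same critical values) that makes this immediate. This is the same reasoning spelled out in the proof of Proposition \ref{lift_c} for the invariant $c$, transported to the homology groups.
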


The definition of $G_k^{\;\;(a,b]}\,(\phi)$ would in fact make sense for all real numbers $a$ and $b$ and also for contactomorphisms of $\text{Cont}^c\,(\mathbb{R}^{2n+1})$. However, the facts that $a$ and $b$ are integers and $\phi$ is 1-periodic are crucial to prove the following proposition.

\begin{prop}\label{conjcont}
For any $\phi$ and $\psi$ in $\text{Cont}^{\;\;c}_{\mbox{\begin{scriptsize}1-per\end{scriptsize}}}(\mathbb{R}^{2n+1})$ we have an induced isomorphism 
$$\psi_{\ast}: G_{\ast}^{\;\;(a,b]}\,(\psi\phi\psi^{-1})\longrightarrow G_{\ast}^{\;\;(a,b]}\,(\phi).$$
\end{prop}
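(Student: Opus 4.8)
The plan is to mimic the argument given for Proposition \ref{conjsympl}, replacing Lemma \ref{crucialsympl} by Lemma \ref{crucialcont} and, crucially, replacing the invariance of the symplectic action spectrum under conjugation by Lemma \ref{crucial}. First I would choose a contact isotopy $\psi_t$ in $\text{Cont}^{\;\;c}_{\text{1-per}}(\mathbb{R}^{2n+1})$ with $\psi_0=\text{id}$ and $\psi_1=\psi$, and apply Theorem \ref{eugfcont} to the Legendrian submanifolds $\Gamma_{\psi_t\phi\psi_t^{-1}}$ of $J^1(S^{2n}\times S^1)$ (these are contact isotopic to the $0$-section, by the discussion in \ref{contactomorphisms}, and the conjugations stay in $\text{Cont}^{\;\;c}_{\text{1-per}}(\mathbb{R}^{2n+1})$ since $\psi_t$ does) to obtain a continuous $1$-parameter family of generating functions $S_t:E\longrightarrow\mathbb{R}$, each $S_t$ generating $\Gamma_{\psi_t\phi\psi_t^{-1}}$, all defined on the same fiber bundle $E$ (after stabilizing as needed, we may take $E=(S^{2n}\times S^1)\times\mathbb{R}^N$ with a fixed quadratic part $Q_\infty$ of index $\iota$).

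The key step is to show that the integers $a$ and $b$ remain regular values of $S_t$ for all $t\in[0,1]$. Since $a$ and $b$ are not critical values of $S_0$, it suffices to rule out a first time $t_0$ at which $a$ (or $b$) becomes a critical value of $S_{t_0}$. If that happened, then since critical values move continuously, there would be a path $c_t$ of critical values of $S_t$, defined on a subinterval containing $t_0$, with $c_{t_0}=a\in\mathbb{Z}$. But then Lemma \ref{crucial} (applied with $k=a$) forces $c_t=a$ for all $t$ in the interval where $c_t$ is defined; pushing this back to $t=0$ contradicts the assumption that $a$ is not a critical value of $S_0$. (Here is exactly where both hypotheses enter: $a$ must be an integer, and $\phi,\psi$ must be $1$-periodic, so that Lemma \ref{crucial} applies. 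One should also note that Lemma \ref{crucial} as stated handles non-degenerate critical points; for the general case one invokes the approximation argument referred to in the proof of Lemma \ref{questo2}, or perturbs $\phi$ slightly so that all translated points of all conjugates $\psi_t\phi\psi_t^{-1}$ with integer action are non-degenerate — this is where a little care is needed.) The same argument applies verbatim to $b$ when $b<\infty$; when $b=\infty$ there is nothing to check at the top.

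With $a$ and $b$ regular values of every $S_t$, the pairs of sublevel sets $(E^b_t,E^a_t)$ for $t\in[0,1]$ are related by an ambient isotopy: this is the pair version of Lemma \ref{pseudomorse}, which is legitimate here even though $E$ is non-compact because the $S_t$ are (special) quadratic at infinity, so all the Morse theory takes place inside a compact region. Hence $H_{k+\iota}(E^b_0,E^a_0)\cong H_{k+\iota}(E^b_1,E^a_1)$, i.e. $G_k^{\;\;(a,b]}(\phi)\cong G_k^{\;\;(a,b]}(\psi\phi\psi^{-1})$; reading off the isomorphism induced by the isotopy (tracked back through the family $S_t$ and the construction $\Gamma_{(\cdot)}$) gives the map $\psi_\ast$, which depends only on $\psi$ and not on the chosen isotopy by the usual connectedness argument. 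I expect the main obstacle to be the bookkeeping around degenerate translated points in the invocation of Lemma \ref{crucial}: one must make sure that the path of critical values $c_t$ is genuinely continuous and that the non-degeneracy hypothesis can be arranged or circumvented, exactly as in the proof of Lemma \ref{questo2}.
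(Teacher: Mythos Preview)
Your proposal is correct and follows essentially the same route as the paper: choose a $1$-periodic contact isotopy $\psi_t$, take a continuous family $S_t$ of generating functions for $\psi_t\phi\psi_t^{-1}$, use Lemma~\ref{crucial} to conclude that the integers $a$ and $b$ are never critical values of $S_t$, and then apply the pair version of Lemma~\ref{pseudomorse}. Your write-up is more explicit than the paper's (in particular you spell out the contradiction argument for why $a,b$ stay regular, and you flag the non-degeneracy caveat in invoking Lemma~\ref{crucial}), but the underlying argument is the same.
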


\begin{proof}
Let $\psi$ be  the time-1 map of an isotopy $\psi_t$ of 1-periodic contactomorphisms of $\mathbb{R}^{2n+1}$, and let $S_t:\mathbb{R}^{2n+1}\times\mathbb{R}^N\longrightarrow\mathbb{R}$ be generating functions for $\psi_t\phi\psi_t^{-1}$. In contrast to the symplectic case the critical values of $S_t$ are not fixed. However we will now see that, due to Lemma \ref{crucial}, we can still find an isotopy conjugating the preimages $S_t^{\phantom{t}-1}(a)$ and $S_t^{\phantom{t}-1}(b)$.
Recall that $G_{\ast}^{\;\;(a,b]}\,(\phi)$ is only defined in the case that $a$ and $b$ are not critical values of the generating function $S_0$ of $\phi$. Since $a$ and $b$ are integers, it follows from Lemma \ref{crucial} that $a$ and $b$ are not critical values of $S_t$, for any $t$. Thus we can apply an analogue of Lemma \ref{pseudomorse} for pairs of sublevel sets to find an isotopy $\theta_t$ of $\mathbb{R}^{2n+1}\times\mathbb{R}^N$ such that
$\theta_t\,\big(S_0^{\phantom{0}-1}((\infty,a])\big)=S_t^{\phantom{t}-1}((\infty,a])$ and $\theta_t\,\big(S_0^{\phantom{0}-1}((\infty,b])\big)=S_t^{\phantom{t}-1}((\infty,b])$. In particular for $t=1$ this induces the desired isomorphism $\psi_{\ast}: G_{\ast}^{\;\;(a,b]}\,(\psi\phi\psi^{-1})\longrightarrow G_{\ast}^{\;\;(a,b]}\,(\phi)$.
\end{proof}

\noindent
Consider now a domain $\mathcal{V}$ in $\mathbb{R}^{2n}\times S^1$. Given integer numbers $a$ and $b$, we denote by $\text{Cont}_{a,b}^{\;\;\;\;\;c}\,(\mathcal{V})$ the set of $\phi$ in $\text{Cont}^{\; \; c}_{\text{1-per}} (\mathbb{R}^{2n+1})$ with support contained in $\mathcal{V}$ and whose generating function does not have $a$, $b$ as critical values. Note that $\text{Cont}_{a,b}^{\;\;\;\;\;c}\,(\mathcal{V})$ is directed with respect to the partial order $\leq$ defined by the Hamiltonians, i.e. for any $\phi$, $\psi$ in $\text{Cont}_{a,b}^{\;\;\;\;\;c}\,(\mathcal{V})$ there is a $\varphi$ in $\text{Cont}_{a,b}^{\;\;\;\;\;c}\,(\mathcal{V})$ such that $\phi\leq\varphi$ and $\psi\leq\varphi$.
Suppose now that $\phi_1\leq\phi_2$. Then by Proposition \ref{parordcont} we know that there are generating functions $S_1$, $S_2: E \longrightarrow \mathbb{R}$ for $\Gamma_{\phi_1}$, $\Gamma_{\phi_2}$ respectively such that $S_1\leq S_2$. Thus we have inclusions of sublevel sets $E_2^{\;a}\subset E_1^{\;a}$ and $E_2^{\;b}\subset E_1^{\;b}$, and so an induced homomorphism $\lambda_1^{\phantom{1}2}: G_k^{\;\;(a,b]}\,(\phi_2)\longrightarrow G_k^{\;\;(a,b]}\,(\phi_1)$. Note that given $\phi_1$, $\phi_2$, $\phi_3$ in $\text{Cont}_{a,b}^{\;\;\;\;\;c}\,(\mathcal{V})$ with $\phi_1\leq\phi_2\leq\phi_3$, it holds $\lambda_3^{\phantom{3}2}\circ \lambda_2^{\phantom{2}1}= \lambda_3^{\phantom{3}1}$ and $\lambda_i^{\phantom{i}i}=\text{id}$. This means in particular that $\lbrace G_k^{\;\;(a,b]}\,(\phi_i)\rbrace_{\phi_i\in\text{Cont}_{a,b}^{\;\;\;\;\;c}\,(\mathcal{V})}$ is an inversely directed family of groups, so we can define the \textbf{$k$-th contact homology group} $G_k^{\;\;(a,b]}\,(\mathcal{V})$ of $\mathcal{V}$ with respect to the values $a$ an $b$ to be the inverse limit of this family. Note that $G_k^{\;\;a,b}\,(\mathcal{V})$ can be calculated by any sequence $\phi_1\leq\phi_2\leq\phi_3\leq \cdots$ such that the associated contact Hamiltonians get arbitrarily large.\\
\\
The next two theorems are proved as in the symplectic case (using Proposition \ref{conjcont} for the first).

\begin{thm}[Contact invariance]\label{cont_inv}
For any domain $\mathcal{V}$ in $\mathbb{R}^{2n}\times S^1$ and any contactomorphism $\psi$ of $\mathbb{R}^{2n}\times S^1$ isotopic to the identity we have an induced isomorphism 
$\psi_{\ast}:G_k^{\;\;(a,b]}\,\big(\psi(\mathcal{V})\big)\longrightarrow G_k^{\;\;(a,b]}\,(\mathcal{V})$.
\end{thm}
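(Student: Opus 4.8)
The plan is to transcribe, almost verbatim, the proof of the symplectic invariance theorem in \ref{sympl_hom}, replacing Proposition \ref{conjsympl} by its contact counterpart Proposition \ref{conjcont}. First I would fix an unbounded ordered sequence $\phi_1\leq\phi_2\leq\phi_3\leq\cdots$ in $\text{Cont}_{a,b}^{\;\;\;\;\;c}\,(\mathcal{V})$ whose associated contact Hamiltonians get arbitrarily large, so that, by definition, $G_k^{\;\;(a,b]}\,(\mathcal{V})$ is the inverse limit of the groups $G_k^{\;\;(a,b]}\,(\phi_i)$ along the connecting homomorphisms $\lambda_i^{\phantom{i}j}$.

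The next step is to check that conjugation by $\psi$ turns this into an admissible unbounded ordered sequence supported in $\psi(\mathcal{V})$. Since $\psi$ is a fixed element of $\text{Cont}_0^{\phantom{0}c}\,(\mathbb{R}^{2n}\times S^1)\cong\text{Cont}^{\; \; c}_{\text{1-per}} (\mathbb{R}^{2n+1})$, each $\psi\phi_i\psi^{-1}$ again lies in $\text{Cont}^{\; \; c}_{\text{1-per}} (\mathbb{R}^{2n+1})$ and is supported in $\psi(\mathcal{V})$. The relation $\leq$ is preserved under conjugation: if $\phi_{i+1}\phi_i^{-1}$ is the time-$1$ flow of a non-negative contact Hamiltonian, then so is $(\psi\phi_{i+1}\psi^{-1})(\psi\phi_i\psi^{-1})^{-1}$, because conjugating a contact vector field multiplies its contact Hamiltonian by the everywhere-positive conformal factor of $\psi$; the same positivity shows the conjugated Hamiltonians are still unbounded. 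The one genuinely new point is that $a$ and $b$ must still fail to be critical values of the generating functions of the $\psi\phi_i\psi^{-1}$, so that these contactomorphisms belong to $\text{Cont}_{a,b}^{\;\;\;\;\;c}\,(\psi(\mathcal{V}))$; this is exactly Lemma \ref{crucial}, applied along the conjugation isotopy $\psi_t\phi_i\psi_t^{-1}$: the integers $a,b$ cannot become critical values along the isotopy since they are not critical values at $t=0$ (this is precisely where $1$-periodicity and the integrality of $a,b$ enter).

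Then Proposition \ref{conjcont} provides, for each $i$, an isomorphism $\psi_{\ast}: G_{\ast}^{\;\;(a,b]}\,(\psi\phi_i\psi^{-1})\longrightarrow G_{\ast}^{\;\;(a,b]}\,(\phi_i)$. The remaining work is to verify that these isomorphisms commute with the connecting homomorphisms $\lambda_i^{\phantom{i}j}$ of the two inverse systems — which amounts to choosing the isotopies $\theta_t$ of sublevel sets produced in the proof of Proposition \ref{conjcont} compatibly with the inclusions of sublevel sets coming from the orderings $\phi_i\leq\phi_j$ — and then to pass to the inverse limit, obtaining the desired isomorphism $\psi_{\ast}:G_{\ast}^{\;\;(a,b]}\,\big(\psi(\mathcal{V})\big)\longrightarrow G_{\ast}^{\;\;(a,b]}\,(\mathcal{V})$. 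I expect the main obstacle to be not any single computation but this last compatibility bookkeeping with the limit process; everything else is a direct copy of the symplectic argument, with Lemma \ref{crucial} supplying the only essential new ingredient.
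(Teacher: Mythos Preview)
Your proposal is correct and follows exactly the approach the paper indicates: the paper's proof is the single sentence ``proved as in the symplectic case (using Proposition \ref{conjcont} for the first)'', and you have faithfully transcribed the symplectic invariance argument, substituting Proposition \ref{conjcont} for Proposition \ref{conjsympl}. The extra care you take---checking that conjugation preserves the order $\leq$ via the positive conformal factor, and invoking Lemma \ref{crucial} to ensure the conjugated sequence stays in $\text{Cont}_{a,b}^{\;\;\;\;\;c}\,(\psi(\mathcal{V}))$---is exactly the detail the paper leaves implicit, and your identification of the compatibility with the $\lambda_i^{\phantom{i}j}$ as the only bookkeeping point matches what the paper glosses over in the symplectic case as well.
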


\begin{thm}[Monotonicity]\label{monot}
Every inclusion of domains induces a homomorphism of homology groups (reversing the order), with the following functorial properties: 
\begin{enumerate}
\renewcommand{\labelenumi}{(\roman{enumi})}
\item
If $\mathcal{V}_1\subset\mathcal{V}_2\subset\mathcal{V}_3$ then the following diagram commutes
\begin{displaymath}
\xymatrix{
 G_{\ast}^{\;\;(a,b]}\,(\mathcal{V}_3) \ar[r] \ar[rd] &
 G_{\ast}^{\;\;(a,b]}\,(\mathcal{V}_2) \ar[d] \\
 & G_{\ast}^{\;\;(a,b]}\,(\mathcal{V}_1).}
\end{displaymath} 
\item
If $\mathcal{V}_1\subset\mathcal{V}_2$, then for any contactomorphism $\psi$ the following diagram commutes
\begin{displaymath}
\xymatrix{
 G_{\ast}^{\;\;(a,b]}\,(\mathcal{V}_2) \ar[r] &
 G_{\ast}^{\;\;(a,b]}\,(\mathcal{V}_1)  \\
 G_{\ast}^{\;\;(a,b]}\,\big(\psi(\mathcal{V}_2)\big) \ar[u]^{\psi_{\ast}} \ar[r] &  
 G_{\ast}^{\;\;(a,b]}\,\big(\psi(\mathcal{V}_1)\big) \ar[u]_{\psi_{\ast}}.}
\end{displaymath} 
\end{enumerate}
\end{thm}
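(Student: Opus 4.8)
The plan is to run the proof of the symplectic Monotonicity theorem from~\ref{sympl_hom} essentially word for word, substituting $1$-periodic contactomorphisms for Hamiltonian symplectomorphisms, Proposition~\ref{parordcont} for Proposition~\ref{parordsymp}, and Proposition~\ref{conjcont} for Proposition~\ref{conjsympl} wherever conjugation invariance is needed. Throughout, $a<b$ are fixed integers and all contactomorphisms are taken in $\text{Cont}^{\;\;c}_{\text{1-per}}(\mathbb{R}^{2n+1})$.

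First I would build the homomorphism attached to an inclusion $\mathcal{V}_1\subset\mathcal{V}_2$. Starting from any cofinal increasing sequence $\phi_1^{\,2}\leq\phi_2^{\,2}\leq\cdots$ in $\text{Cont}_{a,b}^{c}(\mathcal{V}_2)$ (one whose generating contact Hamiltonians are non-negative and tend to $+\infty$), I would produce a cofinal increasing sequence $\phi_1^{\,1}\leq\phi_2^{\,1}\leq\cdots$ in $\text{Cont}_{a,b}^{c}(\mathcal{V}_1)$ with $\phi_i^{\,1}\leq\phi_i^{\,2}$ for all $i$: take $\phi_i^{\,1}$ to be the time-$1$ map of a non-negative $1$-periodic contact Hamiltonian supported in $\mathcal{V}_1$, pointwise below the one generating $\phi_i^{\,2}$, increasing in $i$ and tending pointwise to $+\infty$ on $\mathcal{V}_1$, then perturbed slightly so that $a$ and $b$ are regular values of its generating function. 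By Proposition~\ref{parordcont} the relation $\phi_i^{\,1}\leq\phi_i^{\,2}$ gives generating functions $S_i^{\,1}\leq S_i^{\,2}$ on a common $E$, hence inclusions of sublevel sets and homomorphisms $G_\ast^{(a,b]}(\phi_i^{\,2})\to G_\ast^{(a,b]}(\phi_i^{\,1})$. As in the construction just preceding Theorem~\ref{cont_inv} these are compatible with the transition maps of both inverse systems, so they assemble into a morphism of inverse systems and, passing to limits, into a map $G_\ast^{(a,b]}(\mathcal{V}_2)\to G_\ast^{(a,b]}(\mathcal{V}_1)$; independence of the chosen cofinal sequences follows because any two admit a common refinement.

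Next I would check the functorial properties. For~(i), given $\mathcal{V}_1\subset\mathcal{V}_2\subset\mathcal{V}_3$, I would choose the cofinal sequences compatibly — first one in $\mathcal{V}_3$, then a dominated one in $\mathcal{V}_2$, then a dominated one in $\mathcal{V}_1$, all realized on a common $E$ by iterating Proposition~\ref{parordcont} — so that the triangles of maps between individual contact homology groups commute; commutativity of the triangle of inverse limits is then automatic. For~(ii), given $\psi\in\text{Cont}_0^{c}(\mathbb{R}^{2n}\times S^1)$ and cofinal sequences $\phi_i^{\,j}$ in $\mathcal{V}_j$, the sequences $\psi\phi_i^{\,j}\psi^{-1}$ are cofinal in $\psi(\mathcal{V}_j)$, conjugation by $\psi$ turns $\phi_i^{\,1}\leq\phi_i^{\,2}$ into $\psi\phi_i^{\,1}\psi^{-1}\leq\psi\phi_i^{\,2}\psi^{-1}$, and the isomorphisms $\psi_\ast$ of Proposition~\ref{conjcont} intertwine the inclusion-induced maps at each level; taking inverse limits yields exactly the commuting square of Theorem~\ref{monot}(ii), with vertical arrows the isomorphisms of Theorem~\ref{cont_inv}.

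I expect the only real subtlety to lie in the first step: one must arrange the dominated sequence inside $\mathcal{V}_1$ to stay cofinal — which is possible because $\mathcal{V}_1$ is open, so it admits arbitrarily large non-negative $1$-periodic contact Hamiltonians lying pointwise below a prescribed one supported in $\mathcal{V}_2$ — while also keeping it inside $\text{Cont}_{a,b}^{c}(\mathcal{V}_1)$, which a generic $C^1$-small perturbation of the contact Hamiltonian achieves by making $a,b$ regular values. The point where the $1$-periodicity and the integrality of $a,b$ genuinely enter is not this construction but property~(ii), since it rests on Proposition~\ref{conjcont} and hence ultimately on Lemma~\ref{crucial}, which fails for non-integer values; everything else is the same formal inverse-limit manipulation as in the symplectic case.
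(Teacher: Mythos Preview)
Your proposal is correct and follows exactly the approach indicated by the paper, which simply states that the theorem is ``proved as in the symplectic case'' and leaves the details implicit. You have spelled out precisely those details --- the construction of dominated cofinal sequences via Proposition~\ref{parordcont}, the passage to inverse limits, and the use of Proposition~\ref{conjcont} for part~(ii) --- and your remarks on where the integrality of $a,b$ genuinely matters are accurate and go beyond what the paper makes explicit.
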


\noindent
The relation between symplectic and contact homology is given by the following theorem.


\begin{thm}\label{lift3}
For any domain $\mathcal{U}$ of $\mathbb{R}^{2n}$ we have $G_{\ast}^{\;\;(a,b]}\,(\mathcal{U}\times S^1)=G_{\ast}^{\;\;(a,b]}\,(\mathcal{U})\otimes H_{\ast}(S^1)$. Moreover, this correspondence is functorial in the following sense. Let $\mathcal{U}_1$, $\mathcal{U}_2$ be domains in $\mathbb{R}^{2n}$ with $\mathcal{U}_1\subset\mathcal{U}_2$, and for $i=1,2$ identify $G_{\ast}^{\;\;(a,b]}\,(\mathcal{U}_i\times S^1)$ with $G_{\ast}^{\;\;(a,b]}\,(\mathcal{U}_i)\otimes H_{\ast}(S^1)$. Then the homomorphism $G_{\ast}^{\;\;(a,b]}\,(\mathcal{U}_2\times S^1)\rightarrow G_{\ast}^{\;\;(a,b]}\,(\mathcal{U}_1\times S^1)$ induced by the inclusion $\mathcal{U}_1\times S^1\hookrightarrow\mathcal{U}_2\times S^1$ is given by $\mu\otimes \text{id}$, where $\mu: G_{\ast}^{\;\;(a,b]}\,(\mathcal{U}_2)\rightarrow G_{\ast}^{\;\;(a,b]}\,(\mathcal{U}_1)$ is the homomorphism induced by $\mathcal{U}_1\hookrightarrow\mathcal{U}_2$.
\end{thm}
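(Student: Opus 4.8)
The plan is to reduce the whole statement to Proposition \ref{lift2} by running the inverse limits that define the two sides along compatible cofinal sequences. Fix the integers $a<b$ and pick a cofinal sequence $\varphi_1\leq\varphi_2\leq\cdots$ in $\text{Ham}_{a,b}^{\,c}(\mathcal{U})$ whose Hamiltonians become arbitrarily large, so that $G_{\ast}^{\;\;(a,b]}(\mathcal{U})=\varprojlim_i G_{\ast}^{\;\;(a,b]}(\varphi_i)$ with bonding maps $\lambda_i^{\phantom{i}i+1}$. The assignment $\varphi\mapsto\widetilde{\varphi}$ is a homomorphism of groups carrying a Hamiltonian isotopy with Hamiltonian $H_t$ to the contact isotopy with the ($z$-independent, hence $1$-periodic) contact Hamiltonian $\widetilde{H}_t$; in particular it preserves the relation $\leq$, so $\widetilde{\varphi_1}\leq\widetilde{\varphi_2}\leq\cdots$. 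By Lemma \ref{lift} each $\widetilde{\varphi_i}$ has generating function $\widetilde{S_i}(x,y,z;\xi)=S_i(x,y;\xi)$, which has exactly the same critical values as $S_i$; since $a,b$ are not critical values of $S_i$ and $\widetilde{\varphi_i}$ is supported in $\mathcal{U}\times S^1$, we get $\widetilde{\varphi_i}\in\text{Cont}_{a,b}^{\,c}(\mathcal{U}\times S^1)$. I claim $\{\widetilde{\varphi_i}\}$ is cofinal there: given any $\phi\in\text{Cont}_{a,b}^{\,c}(\mathcal{U}\times S^1)$, the argument in the proof of Theorem \ref{u}(iii) yields $\varphi\in\text{Ham}(\mathcal{U})$ with $\phi\leq\widetilde{\varphi}$, and choosing $i$ large enough that $\varphi\leq\varphi_i$ gives $\phi\leq\widetilde{\varphi}\leq\widetilde{\varphi_i}$. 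Hence $G_{\ast}^{\;\;(a,b]}(\mathcal{U}\times S^1)=\varprojlim_i G_{\ast}^{\;\;(a,b]}(\widetilde{\varphi_i})$.

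The second step is to compute the bonding maps of this lifted system. By Proposition \ref{parordsymp} we may choose the $S_i$ so that in addition $S_i\leq S_{i+1}$ for all $i$; then $\widetilde{S_i}\leq\widetilde{S_{i+1}}$ and, by Lemma \ref{lift}, these are still generating functions for the $\widetilde{\varphi_i}$, so the bonding map $G_{\ast}^{\;\;(a,b]}(\widetilde{\varphi_{i+1}})\to G_{\ast}^{\;\;(a,b]}(\widetilde{\varphi_i})$ is induced by the inclusion of the sublevel pairs of $\widetilde{S_{i+1}}$ into those of $\widetilde{S_i}$. Since the sublevel set of $\widetilde{S_i}$ at $c$ is $E^c_{S_i}\times S^1$, this inclusion is (the corresponding symplectic inclusion)$\;\times\;\text{id}_{S^1}$, and by naturality of the relative Künneth isomorphism --- which splits because $H_{\ast}(S^1)$ is free --- it is carried, under the identifications of Proposition \ref{lift2}, to $\lambda_i^{\phantom{i}i+1}\otimes\text{id}_{H_{\ast}(S^1)}$. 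Passing to the inverse limit and using that $-\otimes H_{\ast}(S^1)$ is degreewise a finite direct sum of shifted copies, hence commutes with $\varprojlim$, I obtain $G_{\ast}^{\;\;(a,b]}(\mathcal{U}\times S^1)=\big(\varprojlim_i G_{\ast}^{\;\;(a,b]}(\varphi_i)\big)\otimes H_{\ast}(S^1)=G_{\ast}^{\;\;(a,b]}(\mathcal{U})\otimes H_{\ast}(S^1)$.

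The functoriality statement follows by the same device applied to two nested domains. For $\mathcal{U}_1\subset\mathcal{U}_2$, choose, as in the proof of the symplectic Monotonicity theorem, cofinal sequences with $\varphi_i^{\,1}\leq\varphi_i^{\,2}$ (and $\varphi_i^{\,j}$ supported in $\mathcal{U}_j$) together with generating functions $S_i^{\,1}\leq S_i^{\,2}$, so that $\mu$ is the inverse limit of the inclusion-induced maps $G_{\ast}^{\;\;(a,b]}(\varphi_i^{\,2})\to G_{\ast}^{\;\;(a,b]}(\varphi_i^{\,1})$. Lifting, $\{\widetilde{\varphi_i^{\,j}}\}$ is cofinal in $\text{Cont}_{a,b}^{\,c}(\mathcal{U}_j\times S^1)$ by the first paragraph, $\widetilde{S_i^{\,1}}\leq\widetilde{S_i^{\,2}}$, and the monotonicity map $G_{\ast}^{\;\;(a,b]}(\mathcal{U}_2\times S^1)\to G_{\ast}^{\;\;(a,b]}(\mathcal{U}_1\times S^1)$ is the inverse limit of maps each of the form (symplectic inclusion)$\;\times\;\text{id}_{S^1}$, hence equals $\mu\otimes\text{id}$ after the Künneth identification and passage to the limit.

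I expect the one genuinely delicate point to be not any single step but the need to perform the identifications of Proposition \ref{lift2} \emph{simultaneously} for all $i$ in a way compatible with the bonding maps: this is precisely why it is essential to take the generating functions of the $\widetilde{\varphi_i}$ to be the lifts of an \emph{increasing} family $S_1\leq S_2\leq\cdots$ of generating functions for the $\varphi_i$ (available by Proposition \ref{parordsymp}), since only then are the Künneth isomorphisms honestly natural with respect to $\lambda_i^{\phantom{i}i+1}$ and can be pushed to the inverse limit. Everything else --- the cofinality of the lifted sequences, borrowed from the proof of Theorem \ref{u}(iii), and the commutation of $-\otimes H_{\ast}(S^1)$ with $\varprojlim$ --- is routine.
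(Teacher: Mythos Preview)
Your proposal is correct and follows essentially the same approach as the paper: lift a cofinal sequence $\varphi_i$ in $\text{Ham}_{a,b}^{\,c}(\mathcal{U})$ to $\widetilde{\varphi_i}$ in $\text{Cont}_{a,b}^{\,c}(\mathcal{U}\times S^1)$, invoke Proposition~\ref{lift2} termwise, and pass to the inverse limit (and similarly for functoriality with sequences $\varphi_i^{\,1}\leq\varphi_i^{\,2}$). You are in fact more careful than the paper, which leaves implicit the cofinality of the lifted sequence, the K\"unneth naturality of the bonding maps, and the commutation of $-\otimes H_{\ast}(S^1)$ with $\varprojlim$.
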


\begin{proof}
If $\varphi_1\leq\varphi_2\leq\varphi_3\leq \cdots$ is an unbounded ordered sequence supported in $\mathcal{U}$ then $\widetilde{\varphi_1}\leq\widetilde{\varphi_2}\leq\widetilde{\varphi_3}\leq \cdots$ in an unbounded ordered sequence supported in $\mathcal{U}\times S^1$, thus the first statement follows from Proposition \ref{lift2}. Suppose now that $\mathcal{U}_1\subset\mathcal{U}_2$, and consider unbounded ordered sequences $\varphi_1^{\phantom{1}1}\leq\varphi_2^{\phantom{2}1}\leq\varphi_3^{\phantom{3}1}\leq \cdots$ and $\varphi_1^{\phantom{1}2}\leq\varphi_2^{\phantom{2}2}\leq\varphi_3^{\phantom{3}2}\leq \cdots$ supported in $\mathcal{U}_1$ and $\mathcal{U}_2$ respectively and such that $\varphi_i^{\phantom{i}1}\leq\varphi_i^{\phantom{i}2}$. Then the homomorphism $G_{\ast}^{\;\;(a,b]}\,(\mathcal{U}_2)\rightarrow G_{\ast}^{\;\;(a,b]}\,(\mathcal{U}_1)$ is induced by the homomorphisms $G_{\ast}^{\;\;(a,b]}\,(\varphi_i^{\phantom{i}2})\rightarrow G_{\ast}^{\;\;(a,b]}\,(\varphi_i^{\phantom{i}1})$. If we calculate the contact homology of $\mathcal{U}_1\times S^1$ and $\mathcal{U}_2\times S^1$ using the sequences $\widetilde{\varphi_1^{\phantom{1}1}}\leq\widetilde{\varphi_2^{\phantom{2}1}}\leq\widetilde{\varphi_3^{\phantom{3}1}}\leq \cdots$ and $\widetilde{\varphi_1^{\phantom{1}2}}\leq\widetilde{\varphi_2^{\phantom{2}2}}\leq\widetilde{\varphi_3^{\phantom{3}2}}\leq \cdots$ then the homomorphism $G_{\ast}^{\;\;(a,b]}\,(\mathcal{U}_2\times S^1)\rightarrow G_{\ast}^{\;\;(a,b]}\,(\mathcal{U}_1\times S^1)$ is induced by the homomorphisms $G_{\ast}^{\;\;(a,b]}\,(\widetilde{\varphi_i^{\phantom{i}2}})=G_{\ast}^{\;\;(a,b]}\,(\varphi_i^{\phantom{i}2})\otimes H_{\ast}(S^1)\rightarrow G_{\ast}^{\;\;(a,b]}\,(\widetilde{\varphi_i^{\phantom{i}1}})=G_{\ast}^{\;\;(a,b]}\,(\varphi_i^{\phantom{i}1})\otimes H_{\ast}(S^1)$ which are obtained by tensoring $G_{\ast}^{\;\;(a,b]}\,(\varphi_i^{\phantom{i}2})\rightarrow G_{\ast}^{\;\;(a,b]}\,(\varphi_i^{\phantom{i}1})$ with the identity on $H_{\ast}(S^1)$.
\end{proof}

\end{document}